\documentclass[11pt]{amsart}
\usepackage{amssymb,amsfonts,graphicx,amsmath,amsthm,amstext,latexsym,amsfonts,bm}
\usepackage{graphicx,epsfig}
\usepackage{bbm}
\usepackage{t1enc}
\usepackage{mathrsfs}
\usepackage{subfig}
\usepackage{hyperref}
\usepackage{a4wide}
\usepackage{tikz}
\usetikzlibrary{intersections}
\usepackage{egothic}
\usepackage [T1] {fontenc}

\usepackage{mathtools}
\usepackage{esint}

\theoremstyle{plain}

\numberwithin{equation}{section}
\newtheorem{theorem}{Theorem}[section]
\newtheorem{corollary}[theorem]{Corollary}
\newtheorem{lemma}[theorem]{Lemma}
\newtheorem{definition}[theorem]{Definition}
\newtheorem{remark}[theorem]{Remark}

\newenvironment{proofad5}{\removelastskip\par\medskip
\noindent{\textbf {Proof of Corollary \ref{CR1}}.}
\rm}{\penalty-20\null\hfill$\blacksquare$\par\medbreak} 

\newenvironment{proofad6}{\removelastskip\par\medskip
\noindent{\textbf {Proof of Corollary \ref{CR2}}.}
\rm}{\penalty-20\null\hfill$\blacksquare$\par\medbreak} 

\usepackage{color}
\definecolor{darkred}{rgb}{0.8,0,0}
\definecolor{darkblue}{rgb}{0,0,0.7}
\definecolor{darkgreen}{rgb}{0,0.4,0}

 %

\newcommand{\eps}{\varepsilon}

\newcommand{\R}{{\mathbb R}}

\newcommand{\W}{{\mathcal W}}

\newcommand{\V}{{\mathcal V}}

\newcommand{\un}{{\rm 1\kern -2.5pt l}}
\newcommand{\tr}{{\rm Tr}}

\def\w{\mathbf{w}}
\def\u{\mathbf{u}}

\def\yy{\mathbf{y}}

\def\eps{\varepsilon}
\def\R{{\mathbb R}}

\def\eps{\varepsilon}
\def\R{{\mathbb R}}

\def\F{{\mathcal F}}

\def\argmin{\mathop{{\rm argmin}}\nolimits}

\def\Tr{\mathop{{\rm Tr}}\nolimits}

\def\curl{\mathop{{\rm curl}}\nolimits}

\def\u{\mathbf{u}}
\def\v{\mathbf{v}}

\def\v{{\bf v}}
\def\w{{\bf w}}

\def\x{{\bf x}}

\hyphenation{Ma-te-ma-ti-ca}
\hyphenation{Ma-te-ma-ti-ci}
\hyphenation{Piaz-za}

\def\wconv{\rightharpoonup}
\newcommand{\ZZZ}{\color{black}}
\newcommand{\MMM}{\color{black}}
\newcommand{\KKK}{\color{black}}



\renewcommand{\epsilon}{\varepsilon}
\newcommand{\beeq}{\begin{equation}}
\newcommand{\eneq}{\end{equation}}
\newcommand{\bear}{\begin{array}}
\newcommand{\enar}{\end{array}}
\newcommand{\bema}{\begin{displaymath}}
\newcommand{\enma}{\end{displaymath}}

\newcommand{\beea}{\begin{eqnarray}}
\newcommand{\enea}{\end{eqnarray}}

\newcommand{\om}{\Omega}

\newcommand{\bb}{\boldsymbol}




\newcommand{\lab}[1]{ \label{#1} }


\newenvironment{proofth1}{\removelastskip\par\medskip   
\noindent{\bf Proof of {\rm {\bf Theorem \ref{TH1}}}.}
\rm}{\penalty-20\null\hfill$\blacksquare$\par\medbreak} 


\def\wconv{\rightharpoonup}



\title[Asymptotic behavior of constrained local minimizers in finite elasticity]{Asymptotic behavior of constrained local minimizers in finite elasticity}
  \author{Edoardo Mainini}
\address[Edoardo Mainini]{Dipartimento di Ingegneria meccanica, energetica, gestionale e dei trasporti, 
  Universit\`a  degli studi di Genova, Via all'Opera Pia, 15 - 16145 Genova Italy.}
\email{mainini@dime.unige.it}

\author{Roberto Ognibene}
\address[Roberto Ognibene]{Dipartimento di Ingegneria meccanica, energetica, gestionale e dei trasporti, 
 Universit\`a  degli studi di Genova, Via all'Opera Pia, 15 - 16145 Genova Italy.}
\email{roberto.ognibene@edu.unige.it}

   \author{Danilo Percivale}
 \address[Danilo Percivale]{Dipartimento di Ingegneria meccanica, energetica, gestionale e dei trasporti, 
  Universit\`a  degli studi di Genova, Via all'Opera Pia, 15 - 16145 Genova Italy.}
  \email{percivale@dime.unige.it}
%
%
\subjclass[2010]{49J45, 74K30, 74K35, 74R10}
\keywords{Calculus of Variations, 
  Linear Elasticity, Finite Elasticity, Traction Problem,  
  Gamma-convergence,  Equilibrium Equations, Constrained Minimizers}
\begin{document}
 \maketitle
\begin{abstract} We provide an approximation result for the pure traction problem of linearized elasticity in terms of local minimizers of finite elasticity, under the constraint of  vanishing average curl for admissible deformation maps. When suitable rotations are included in the constraint, the limit  is shown to be the linear elastic equilibrium associated to rotated loads.
\end{abstract}


\section{Introduction}
We consider the pure traction problem of finite elasticity and its relation with the linear elastic problem.  If $\om\subset\R^3$ is the reference configuration of  an elastic body and $\mathbf y:\om\to \mathbb R^3$ is the deformation field,  we introduce the  global energy
\begin{equation}\label{un}
\displaystyle \mathcal G(\mathbf y;\mathcal L):=\int_\om\mathcal W(\x,\nabla\mathbf y)\,d\x
-\mathcal L(\mathbf y-\mathbf i),
\end{equation}
  where $\mathbf i$ denotes the identity map on $\om$  and $\mathcal W:\Omega\times\mathbb R^{3\times 3}\to[0,+\infty]$ is the strain energy density. Here, $\mathcal L$ is the load functional whose typical form is   
\begin{equation}\label{lod}\mathcal{L}(\v):=\int_\om \mathbf f\cdot \v\,d\x+\int_{\partial\om}\mathbf g\cdot \v\, dS,\end{equation}
 where $\mathbf{f}\colon\Omega\to \R^3$ is a body force field, $\mathbf{g}\colon\partial\Omega\to\R^3$ is a surface force field, $\mathbf{v}=\mathbf y-\mathbf i$ is the displacement field and 
  $dS$ is the surface measure.
 For  every $\x\in\om$, the function $\mathcal W(\x,\cdot)$ is assumed to be  frame indifferent, uniquely minimized at rotations with value $0$, $C^2$-smooth and quadratically growing out of rotations; in addition, $\mathcal W$ shall satisfy the natural condition
 \[ \mathcal W(\x,\mathbf F) = +\infty\ \text{ if } \  \det \mathbf F \leq 0.\]
 If $h>0$ is an adimensional parameter, we shall consider the natural rescaling $h^{-2}\mathcal G(\cdot;h\mathcal L)$.
 We also introduce the associated linear elastic energy
 \begin{equation*}
\mathcal F_0(\u;\mathcal L):= \frac{1}{2}\int_\Omega \mathbb E( \u)\, D^2\W(\x,\mathbf{I})\,\mathbb E( \u)\,d\x-\mathcal{L}(\u),
\end{equation*}
where  $\mathbb E(\u):=\mathrm{sym}(\nabla\u)$ is the infinitesimal strain tensor and $\mathbf I$ is the identity matrix.
This standard expression is formally obtained by linearization around the identity: by introducing the rescaled displacement $\mathbf u=h^{-1}\mathbf v$, by  writing the deformation field as $\mathbf i+h\mathbf u$ and by considering the rescaled energies  $h^{-2}\mathcal G(\mathbf i+h\mathbf u;h\mathcal L)$,  functional $\mathcal F_0(\mathbf u;\mathcal L)$ is the  pointwise limit obtained  by performing a Taylor expansion for small $h$.

  Recently, several contributions \cite{JS,  MPTJOTA, MPTARMA,  MPsharp, MM} have analyzed the variational limit of the rescaled functionals $h^{-2}\mathcal G(\cdot;h\mathcal L)$ as $h\to 0$. For this purpose, it is necessary to assume that $\mathcal L$ is equilibrated (i.e., with null resultant and null momentum) and that
   \beeq\label{L1plus}
 \mathcal L(\mathbf R\x-\x)\le 0 \qquad  \forall\, \mathbf R\in SO(3),
\eneq
   where $SO(3)$ denotes the special orthogonal group, thus preventing the rescaled energy being driven to $-\infty$ by rigid motions as $h$ becomes small.
 \KKK
  In contrast to the case of the Dirichlet problem \cite{ADMDS,DMPN,JS,MP,Sc}, global minimizers (or quasi-minimizers)  of $h^{-2}\mathcal G(\cdot;h\mathcal L)$ over $H^1(\om,\mathbb R^3)$ \KKK do not necessarily converge to minimizers of the associated   linearized elastic energy $\mathcal F_0(\cdot;\mathcal L)$,
  as might be expected from classical  continuum mechanics literature, so it seems  appropriate to shift the attention to the asymptotic behavior  of suitably constrained minimizers in order to recover a minimizer of $\mathcal F_0(\cdot;\mathcal L)$ in the limit. 
  Indeed, the results in \cite{JS, MPsharp, MM} show that, without further assumptions on the external loads, such as the absence of axes of equilibrium,
   there holds  $$\inf_{H^1(\om,\R^3)} h^{-2}\mathcal G(\cdot;h\mathcal L)\to \min_{H^1(\om,\R^3)} \mathcal F_0(\cdot;\mathbf R_*\mathcal L)\qquad\mbox{as $h\to 0$}$$
  for some suitable $\mathbf R_*\in SO(3)$, and  possibly $$ \min_{H^1(\om,\R^3)} \mathcal F_0(\cdot;\mathbf R_*\mathcal L)< \min_{H^1(\om,\R^3)} \mathcal F_0(\cdot;\mathcal L).$$\\

Before introducing constrained approximations,
let us mention that an alternative standard way for rigorously obtaining  linearized elasticity
is to consider the equilibrium problems associated to small loads. 
Equilibrium configurations of the (rescaled) traction problem for the equilibrated loads $(\mathbf f,\mathbf g)$ are given by the deformations $\yy\colon\Omega\to\R^3$ which solve 


\begin{equation}\label{eq:nonlinear_h}
	\left\{
	\begin{aligned}
		-\mathrm{div}\,(D\W(\x,\nabla\yy))&=h\mathbf{f}, &&\text{in }\Omega, \\
		D\W(\x,\nabla\yy)\mathbf{n}&=h\mathbf{g},&&\text{on }\partial\Omega,
	\end{aligned}
	\right.
\end{equation} where $\mathbf{n}$ denotes the exterior unit normal vector to $\Omega$.
Under suitable conditions on $\mathbf{f}$ and $\mathbf{g}$, classical results  based on  the implicit function theorem, described for instance in \cite{C,TN,V}, allow to prove that there exists  a solution $\mathbf y_h$ to the above problem
  in a small neighborhood of the identity and that  
$
\u_h:=h^{-1}({\yy_h-\mathbf{i}})\to 	\mathbf{u}_0
$
in a suitable Sobolev  space over $\Omega$, where $\u_0$ solves the linearized problem
\begin{equation*}
	\left\{\begin{aligned}
		-\mathrm{div}\,(D^2\W(\x,\mathbf{I})\mathbb E(\u_0))&=\mathbf{f}&&\text{in }\Omega, \\
		D^2\W(\x,\mathbf{I})\mathbb E(\u_0)\mathbf{n}&=\mathbf{g},&&\text{on }\partial\Omega,
	\end{aligned}\right.
\end{equation*}
that is, $\u_0$ is a minimizer of the linearized elastic energy $\mathcal F_0(\cdot;\mathcal L)$.
A drawback of  this scheme is that it works only if the external loads satisfy the following integrability condition
\beeq\lab{intforces}
\mathbf{f}\in L^p(\Omega,\R^3),\quad\mathbf{g}\in W^{1-1/p,p}(\partial\Omega,\R^3)\quad \text{for some }\  p> 3
\eneq
along with further nondegeneracy constraints (the simplest one being again the absence of axes of equilibrium) that we shall discuss in Section \ref{implicit} along with a precise statement and some literature review.\\

 
 \ZZZ
 In this paper, we would like to avoid the additional assumptions on external loads that are required both by the above classical approach and by the approximation with global minimizers of the rescaled functionals $h^{-2}\mathcal G(\cdot;h\mathcal L)$.
The approach we propose is to include a constraint on admissible deformation maps, without affecting the limit problem.
  \KKK
  The first step towards this goal is  a suitable definition of local minimizer of functionals \eqref{un}, which takes into account that the expected limit functional $\mathcal F_0(\cdot;\mathcal L)$ can be restricted, without loss of generality, to those  $\u\in H^1(\om,\mathbb R^3)$ such that \begin{equation}\label{curlu=0}\int_\om \curl \u=\mathbf 0,\end{equation} provided that $\mathcal L$ is equilibrated. Indeed, if $\u\in H^1(\om,\R^3)$ and $\w:=|\om|^{-1}\int_\om\mathrm{curl}\,\u$, it is immediate to check that $\mathcal F_{0}(\u-\tfrac12 \w\wedge\x;\mathcal L)=\mathcal F_{0}(\u;\mathcal L)$, where $\wedge$ denotes the cross product.  In the infinitesimal theory, the above constraint has a clear mechanical meaning: the infinitesimal rotation tensor $\mathrm{skew}(\nabla \u)$, whose axial vector is $\tfrac12\,\mathrm{curl}\,\u$, is vanishing in average.  \KKK
In this perspective we shall define constrained local minimizers $\mathbf y$ of $\mathcal G(\cdot;\mathcal L)$, the constraint being  \begin{equation}\label{curly=0}\int_\om \curl \mathbf y=\mathbf 0,\end{equation} 
 by requiring that $\mathcal G(\mathbf y;\mathcal L)\le \mathcal G(\mathbf y+\eps\boldsymbol\psi;\mathcal L)$ for every $\boldsymbol\psi$ such that $\int_\om \curl \boldsymbol\psi=\mathbf 0$ and for any small enough $\eps$ (we refer to Section \ref{mainsection} for the rigorous definition). 
In finite strain theory, 
the meaning of \eqref{curly=0} is obtained from the polar decomposition.
  Letting $\mathbf H:=|\Omega|^{-1}\int_\Omega\nabla\mathbf y$ be the average deformation gradient, and assuming that $ \mathbf H$ is positive definite, the polar decomposition of $\mathbf H$ provides a unique  $\mathbf R\in SO(3)$   such that
 $
 \mathbf H=\mathbf R\sqrt{\mathbf H^T\mathbf H}.
 $
 From the latter relation, and since $\mathbf H$ is positive definite, we deduce that  $\mathbf R=\mathbf I$ if and only if $\mathbf H^T=\mathbf H$, i.e., $\int_\Omega\mathrm{skew}(\nabla\mathbf y)=\mathbf 0$. Therefore,  \eqref{curly=0} means that the rotation tensor associated with the average deformation gradient is the identity. 
 Of course,  \eqref{curly=0} is equivalent to \eqref{curlu=0} if $\mathbf u$ is the (rescaled) displacement associated to the deformation $\mathbf y$, i.e., $\mathbf y=\mathbf i+h\u$, and in this case $|\Omega|^{-1}\int_\Omega\nabla\mathbf y$ is positive definite for small $h$.
 
 We notice that nontrivial rigid rotations are not admissible under the constraint \eqref{curly=0}.  In fact,
  since we deal with energetically stable configurations (local minimizers), let us  mention that, in the stability theory of elastic equilibria, other constraints such as the {\it zero moment condition}  by Beatty  \cite{Be} (see also \cite{F1,F2}) have been introduced for preventing instabilities due to rigid rotations. The zero moment condition restricts  admissible deformations to those  $\mathbf y$ that satisfy
  $\int_\Omega\mathbf f\wedge\mathbf y+\int_{\partial\Omega}\mathbf g\wedge \mathbf y\,dS=\mathbf 0.$
   It is interesting to notice that in a simple problem of homogeneous normal boundary traction, the zero moment condition coincides with \eqref{curly=0}.
 Indeed, if $\mathbf f \equiv\mathbf 0$ and $\mathbf g=\lambda\mathbf n$, where $\lambda\in \mathbb R$ and $\mathbf n$ is the exterior unit normal to $\partial\Omega$, then    $\int_{\partial\Omega} \mathbf g\wedge \mathbf y\,dS=
 \lambda\int_\Omega\mathrm{curl}\,\mathbf y$. 

  \KKK
 According to the above definition of constrained local minimizers, in our main result (see Theorem \ref{TH1} in Section \ref{mainsection}), under additional assumptions on $\W$, we show that if external loads are equilibrated then there exist  constrained local minimizers $\mathbf y_h\in H^1(\om,\mathbb R^3)$ for $\mathcal G(\cdot;h \mathcal L)$   such that
 by letting $\u_h:=h^{-1}(\mathbf y_h-\mathbf i)$ there hold
  \[ \begin{array} {ll} &\mathbb E(\u_h)\wconv \mathbb E(\u_0)\ \hbox{ weakly in }\ L^2(\om,\R^{3\times 3})\qquad\mbox{and}\qquad
 h^{-2}\mathcal G(\mathbf y_{h};h\mathcal L)\to {\mathcal F}_0(\u_0;\mathcal L)
 \end{array}
 \]
 as $h\to 0$, where  $\u_0$ minimizes $\mathcal F_0(\cdot;\mathcal L)$ over $H^1(\om,\R^3)$. \ZZZ 
    \KKK\\ 
 
Finally, by strengthening the hypotheses on external loads, namely by also assuming 
 \eqref{L1plus},
 an almost immediate consequence of the main result is that, given $\mathbf R\in SO(3)$ belonging to the rotation kernel
\beeq\lab{rotker}
\mathcal{S}^0_{\mathcal{L}}:=\{\mathbf{R}\in SO(3)\colon \mathcal{L}((\mathbf{R}-\mathbf{I})\x)=0\},
\eneq
for any  small enough $h$ there exists a constrained local minimizer $\mathbf y_h$ for $\mathcal G(\cdot, h\mathcal L)$ (this time the constraint for admissible deformations $\mathbf y$ being $ \int_\om \curl \mathbf R^T\mathbf y=\mathbf 0$), and 
 \[
 \lim_{h\to\infty}h^{-2}\mathcal G(\mathbf y_h;h\mathcal L)= \min_{\in H^1(\om,\mathbb R^3)} {\mathcal F}_0(\cdot;\mathbf R^T \mathcal L).
 \]
 This, in a nutshell, is what might be considered the essence of the approximation of minimizers of linear elasticity with constrained local minimizers of finite elasticity: when condition \eqref{L1plus} is satisfied then close to any $\mathbf R\in \mathcal{S}^0_{\mathcal{L}}$
 there exists a sequence of constrained local minimizers of functionals \eqref{un} such that the corresponding energies converge to the energy of the linearly elastic problem where $\mathcal L$ is replaced by $\mathbf R^T\mathcal L$. This further clarifies 
  that global minimizers of \eqref{un} are not a good choice in order to approximate minimizers of the linear elastic energy $\mathcal F_0(\cdot;\mathcal L)$.

 \subsection*{Plan of the paper} In section \ref{mainsection} we introduce all the assumptions of the theory and rigorously state the main results, which are proved in Section \ref{proofs}. In section \ref{implicit} we give a brief comparison between our approach and the classical results about the asymptotic behavior of equilibrium states of traction problems via implicit function theorem. In Section \ref{sectionexample} we revisit some examples of \cite{MPsharp} by applying our convergence results of constrained local minimizers.

\section{Main results}\label{mainsection}

In this section we introduce the basic notations and assumptions, then we state the main results.
In the following, $\Omega$ is a bounded open connected Lipschitz subset of $\mathbb R^3$, representing the reference configuration of the body.
$\mathbb R^{3\times3}$ is the set of real $3\times3$ matrices. $\mathbb R^{3\times3}_{\mathrm{sym}}$ (resp. $\mathbb R^{3\times3}_{\mathrm{skew}}$) is the set of symmetric (resp. skew-symmetric) matrices. $\mathbb R^{3\times3}_+$ denotes the set of matrices with positive determinant. 

 \subsection*{Assumptions on the elastic energy density} 
We let
  $\mathcal W : \om \times \mathbb R^{3 \times 3} \to [0, +\infty ]$ be  
 a Carath\'eodory function. \KKK We will consider the following assumptions. 
%
%
\beeq \lab{framind}\tag{$\bb{\mathcal W1}$} \W(\x, \mathbf R\mathbf F)=\W(\x, \mathbf F) \qquad \forall \,  \mathbf R \in SO(3) \quad \forall\, \mathbf F\in \mathbb R^{3 \times 3},\qquad \mbox{for a.e. $\x\in\Omega$},
\eneq
\beeq \lab{Z1}\tag{$\bb{\mathcal W2}$}
\min \mathcal W=\KKK	\W(\x,\mathbf I)=0 \quad \mbox{for a.e. $\x\in\Omega$}.
\eneq
Moreover, we shall consider the following regularity property:  there exist an open neighborhood $\mathcal U$ of $SO(3)$ in $\R^{3\times3}$,  an increasing function $\omega:\mathbb R_+\to\mathbb R$ satisfying $\lim_{t\to0^+}\omega(t)=0$ and a constant $K>0$
such that for a.e. $\x\in\om$
\beeq\begin{array}{ll}\lab{reg}\tag{$\bb{\mathcal W3}$} &   
\vspace{0,1cm}
\mathcal W(\x,\cdot)\in C^{2}(\mathcal U),\;\;\;
 \vspace{0,1cm}
  |D^2 \mathcal W(\x,\mathbf I)|\le K \;\;\hbox{and}\\
& 
 |D^2\W(\x,\mathbf F)-D^2\W(\x,\mathbf G)|\le\omega(|\mathbf F-\mathbf G|)\quad\forall\; \mathbf F,\mathbf G\in\mathcal U.
\end{array}
\eneq
We introduce a growth conditions from below: there exist $C>0$ such that for a.e. $\x\in\Omega$
\beeq \lab{coerc}\tag{$\bb{\mathcal W4}$}
\begin{array}{ll}
\W(\x,\mathbf F)\ge  C\, d(\mathbf F, SO(3))^2\qquad
 \forall\, \mathbf F\in \mathbb R^{3 \times 3},
\end{array}
\eneq
where $\mathrm{dist}(\mathbf F, SO(3)):=\inf\{|\mathbf F-\mathbf R|: \mathbf R\in SO(3)\}$ and $|\mathbf F|^2:=\mathrm{Tr}(\mathbf F^T\mathbf F)$.
 A second growth condition from below that we shall consider  is the following: there exist $ C'>0$,  $s\ge 2$, $q\ge \tfrac{s}{s-1}$ and $r>1$ such that for a.e. $\x\in\om$
 \beeq \lab{coerc2}\tag{$\bb{\mathcal W5}$}
 \begin{array}{ll} 
 	& \W(\x,\mathbf F)\ge  C' (|\mathbf F|^s+|\mathrm{cof}\,\mathbf F|^q+(\det\mathbf F)^r-1)\qquad
 	\forall\, \mathbf F\in \mathbb R^{3 \times 3}\end{array}
 \eneq
Finally, we introduce a polyconvexity condition: for a.e. $\x\in\om$
 \beeq\begin{array}{ll}\lab{poly}\tag{$\bb{\mathcal W6}$} &   
\hbox{the map $\mathbb R^{3\times3} \KKK\ni \mathbf F\mapsto\mathcal W(\x,\mathbf F)$ is polyconvex and}
\vspace{0.21cm}
\\&\displaystyle\mathcal W(\x,\mathbf F)=+\infty\text{ if }\det\mathbf F\le 0,\quad \lim_{\det\mathbf F\to 0^+}\mathcal W(\x,\mathbf F)=+\infty.\\
\end{array}
\eneq
\subsection*{Model energy densities}

We present here two instances of energy densities $\W$ which satisfy the above assumptions  and for which the main result of the present paper (see Theorem \ref{TH1} below) applies. \MMM For simplicity, we consider the homogeneous case. \KKK Incompressible materials are usually modeled by isochoric-type energies $\mathcal{W}_{\textup{iso}}$ defined on the set of matrices with unitary determinant $\{\mathbf{F}\in\R^{3\times 3}\colon \det \mathbf{F}=1\}$ and one can pass to the corresponding compressible model by letting \MMM
\begin{equation}\label{iso-vol}
\mathcal{W}(\mathbf{F}):=\left\{\begin{array}{ll}\displaystyle\mathcal{W}_{\textup{iso}}\left(\frac{\mathbf{F}}{(\det\mathbf{F})^{1/3}}\right)+\mathcal{W}_{\textup{vol}}(\mathbf{F})\qquad&\mbox{if $\det\mathbf F>0$}\\
+\infty\qquad&\mbox{if $\det\mathbf F\le 0$},
\end{array}\right.
\end{equation}\KKK
 where $\mathcal{W}_{\textup{vol}}(\mathbf{F})=g(\det\mathbf{F})$ for some convex $g\colon \R_+\to \R$  of class $C^2$ in a neighborhood of $1$ and such that
\begin{equation}\label{guno}
g(t)\geq 0\text{ for all }t>0,\quad g(t)=0 \text{ if and only if }t=1,\quad g''(1)>0,\quad \lim_{t\to 0^+}g(t)=+\infty.
\end{equation}
In addition, the function $g$ is  required to grow faster than linearly at infinity, i.e.
\begin{equation}\label{phi_growth}
g(t)	\geq C'' t^r,\quad\text{for }t>0\text{ sufficiently large and for some }C''>0\text{ and } r>1.
\end{equation}
In view of the isochoric-volumetric decomposition \eqref{iso-vol}, a model energy density is identified by the choice of $\W_{\textup{iso}}$, while $\W_{\textup{vol}}$ is left in a general form satisfying the above restrictions. 

The first example is an energy of Yeoh type, which is defined by choosing
\begin{equation}\label{Yeoh}
\W_{\textup{iso}}(\mathbf{F}):=\sum_{k=1}^3 c_k(|\mathbf{F}|^2-3)^k
\end{equation}
with coefficients $c_k> 0$. It is easy to check that with this choice the  energy density  satisfies all the assumptions from \eqref{framind} to \eqref{poly}, \MMM provided that $r\ge 2$ in \eqref{phi_growth}. \KKK Indeed, the validity of \eqref{framind}, \eqref{Z1}, \eqref{reg} is trivial. \MMM The validity of $\eqref{poly}$ follows from the polyconvexity of the map $\mathbb R^{3\times3}_+\ni\mathbf F\mapsto\frac{|\mathbf F|^2}{(\det\mathbf F)^{2/3}}$.  
The inequality in \eqref{coerc} is satisfied if $\mathrm{dist}(\mathbf F,SO(3))$ is small enough, as shown in \cite[Remark 2.8]{ADMDS}, and therefore \eqref{coerc}  directly follows from the following claim: there are positive constants $a_1,a_2$ such that $\mathcal W(\mathbf F)\ge a_1|\mathbf F|^3-a_2$ for every $\mathbf F\in\mathbb R^{3\times3}$. In order to prove such a claim, it is clear that  we can reduce to consider the regime $\mathrm{dist}(\mathbf F,SO(3))\gg1$ (i.e., $|\mathbf F|\gg1$). 
   By \eqref{phi_growth} there is $t_0>1$ such that $g(t)\ge C'' t^r$ for $t\ge t_0$ and the claim is obvious for $\det \mathbf F<t_0$ so that we may assume $\det\mathbf F\ge t_0$. If $|\mathbf F|\le (\det\mathbf F)^{2/3}$  we have $g(\det\mathbf F)\ge C''(\det\mathbf F)^r\ge C''|\mathbf F|^{3r/2}$ and the claim follows since $r\ge 2$; else if $|\mathbf F|\ge (\det\mathbf F)^{2/3}$ then $|\mathbf F|^2/(\det\mathbf F)^{2/3}$ is large and  Young inequality entails the existence of suitable positive constants $c_3', c_3''$ such that
\[
\mathcal W(\mathbf F)\ge c_3\left(\frac{|\mathbf F|^2}{(\det\mathbf F)^{2/3}}-3\right)^3+g(\det\mathbf F)\ge c_3'\left(\frac{|\mathbf F|^6}{(\det\mathbf F)^{2}}+(\det\mathbf F)^r\right)\ge c_3''\,|\mathbf F|^{\frac{6r}{2+r}}.
\]    
The claim is proved since $r\ge 2$, and it implies \eqref{coerc2} by means of the elementary inequality $|\mathrm{cof}\,\mathbf F|\le 2|\mathbf F|^2$.
\KKK

A second example of energy density for hyperelastic materials is given by  Ogden type energies, identified by the following choice
\[
\W_{\textup{iso}}(\mathbf{F}):=\sum_{i=1}^n c_i \left(\Tr(\mathbf{F}^T\mathbf{F})^{\gamma_i/2}-3\right)+\sum_{j=1}^m d_k\left(\Tr(\mathrm{cof}\mathbf{F}^T\mathrm{cof}\mathbf{F})^{\delta_j/2}-3\right),
\]
where $n,m$ are nonnegative integers and $c_i,\gamma_i,d_j,\delta_j\geq 0$. While hypotheses \eqref{framind},\eqref{Z1} and \eqref{reg} are again easily seen to be fulfilled by this model, some care must be taken concerning the remaining assumptions. In particular, one can prove that \eqref{coerc2} and \eqref{poly} hold true, after suitably restricting the values of the exponents $\gamma_i$ and $\delta_j$. More precisely, in order to obtain polyconvexity of the function $\mathcal{W}$ (i.e. assumption \eqref{poly}), one shall assume that
\[
	\gamma_i\geq \frac{3}{2}\quad\text{for all }i=1,\dots,n\quad\text{and}\quad \delta_j\geq 3\quad\text{for all }j=1,\dots,m.
\]
Moreover, if we let $\gamma:=\max\{\gamma_i\colon i=1,\dots,n\}$ and $\delta:=\max\{\delta_j\colon j=1,\dots,m\}$, we have that \eqref{coerc2} holds with $r>1$ as in \eqref{phi_growth} together with
\[
	s:=\frac{3\gamma r}{3r+\gamma}\quad\text{and}\quad q:=\frac{3\delta r}{2\delta+3r},
\]
provided $s\geq 2$ and $q\geq s/(s-1)$. We point out that $\delta\geq 3$ and $r>1$ imply $q>1$. For instance, \eqref{coerc2} is satisfied if \MMM $\gamma\geq 5/2$ and $r\ge 4$ 
\KKK The proof of \eqref{coerc2} and \eqref{poly} can be found in \cite[Proposition 6, Proposition 7]{CDHL}. Finally, for the proof of \eqref{coerc} one can see \cite[Remark 2.8]{ADMDS} and no further restrictions on $\gamma_i$ and $\delta_j$ are needed. 

\bigskip

\subsection*{Energy functionals}
Let  $\mathcal W$ satisfy assumptions \eqref{framind}, \eqref{Z1}, \eqref{reg} and \eqref{coerc}. We denote by $(H^1(\om,\mathbb R^3))^*$
the dual of the Sobolev space $H^1(\om,\mathbb R^3)$.
Given
 $ \mathcal L\in (H^1(\om,\mathbb R^3))^*$, we introduce the following energy functionals. We let $\mathcal G(\cdot;\mathcal L):H^1(\om,\mathbb R^3)\to (-\infty,+\infty)$ be defined by
\begin{equation}\label{functional}
\mathcal G(\mathbf y;\mathcal L):=\int_\om\mathcal W(\x,\nabla\mathbf y(\x))\,d\x-\mathcal L(\mathbf y-\mathbf i),
\end{equation}
and for $\u\in H^1(\om,\mathbb R^3)$ we let
\begin{equation*}
\mathcal F(\u;\mathcal L):=\int_\om\mathcal W(\x,\mathbf I+\nabla\mathbf u(\x))\,d\x-\mathcal L(\u)=\mathcal G(\mathbf i+\u;\mathcal L).
\end{equation*}
We define $\mathcal F_0:H^1(\om,\mathbb R^3)\to (-\infty,+\infty)$ as 
\beeq\label{lineare}
\mathcal F_0(\u;\mathcal L):=\frac{1}{2}\int_\om \mathbb E(\u)\,D^2\mathcal W(x,\mathbf I)\,\mathbb E(\u)\,d\x-\mathcal L(\u)
\eneq
If $h> 0$ we also define the rescaled functionals 
\beeq\label{Fh}
\mathcal F_h(\u;\mathcal L):=h^{-2}\mathcal F(h\u; h\mathcal L).
\eneq
We  refer to $\mathcal L$ as the {\it load functional}. Thanks to the Sobolev embedding $H^1(\om,\mathbb R^3)\hookrightarrow L^6(\om,\mathbb R^3)$, it can be always written as  
$$\mathcal L(\u)=\int_\om (\mathbf f_*\cdot \u+\mathbf G_*: \nabla \u)\,d\x$$
for suitable $\mathbf f_*\in L^{\frac{6}{5}}(\om,\mathbb R^3),\ \mathbf G_*\in L^2(\om,\mathbb R^{3\times3})$. We  assume that $\mathcal L$ is  equilibrated, 
i.e., 
\begin{equation}\label{L1}
\mathcal L(\mathbf c)=\mathcal L(\mathbf W\x)=0\quad\mbox{ for every $\mathbf c\in\mathbb R^3$ and every $\mathbf W\in \mathbb R^{3\times3}_{\mathrm{skew}}$}.
\end{equation}
We denote with $\|\mathcal L\|_*$ its norm in the space $(H^1(\om,\mathbb R^3))^*$.
By \eqref{L1} and by Korn and Poincar\'e inequalities,  there exists a constant $K_\Omega>0$ such that
\begin{equation}\label{KP}
\mathcal L( \u)\le K_\Omega\|\mathcal L\|_{ *}\,\|\mathbb E\u\|_{L^2(\om,\mathbb R^{3\times3})},
\end{equation}
for every $\u\in H^1(\om,\mathbb R^3)$.
 
Concerning the elastic energy, the following rigidity inequality is crucial, see \cite{ADMDS,FJM0, FJM}: there exists a constant $\tilde C=\tilde C_{\om}$ such that for every $\mathbf y \in H^1(\om)$ there is $\mathbf R\in SO(3)$ (depending on $\mathbf y$) such that
\[
\int_\om |\nabla\mathbf y(\x)-\mathbf R|^2\,d\x\le \tilde C_{\om}\int_\om \mathrm{dist}(\nabla\mathbf y(\x), SO(3))^2\,d\x. 
\]
For every $\mathbf y\in H^1(\om,\mathbb R^3)$, we combine the latter with \eqref{coerc} and obtain the  existence of  $\mathbf R \in SO(3)$ (depending on $\mathbf y$) such that
\begin{equation}\label{rigidity}
\int_\om |\nabla\mathbf y(\x)-\mathbf R|^2\,d\x\le {C_{\om}} \int_\om \mathcal W(\x,\nabla\mathbf y(\x))\,d\x,
\end{equation}
where $C_{\om}:=\tilde C_{\om}/C$ and $C$ is the constant in \eqref{coerc}. 

\subsection*{Main results}
We state the main result, showing that there exist suitably constrained local minimizers of the rescaled functionals \eqref{Fh} that converge to a global minimizer of the linear elastic energy \eqref{lineare} as $h\to0$.  We need the precise notion local minimizer under the constraint of vanishing average curl.
In the next definition we make use of the following notation: for $\mathbf R\in SO(3)$, we let
\[
H^1_{\mathbf R}(\om,\mathbb R^3):=\left\{\u\in H^1(\om,\mathbb R^3):\, \int_\om\mathrm{curl}\,\mathbf R^T\u=\mathbf 0\right\}.
\]
In particular, $H^1_{\mathbf I}(\om,\mathbb R^3)$ is the linear subspace of $H^1(\om,\mathbb R^3)$ made of vector fields with vanishing average curl.
\begin{definition}[{\bf constrained local minimizer}]\label{constrained}
Let $\mathcal W$ satisfy assumptions  \eqref{framind}, \eqref{Z1}, \eqref{reg} and \eqref{coerc} and let $\mathbf R\in SO(3)$.
 We say that  $\mathbf y\in H^1_{\mathbf R}(\om,\mathbb R^3)$ is a \emph{local minimizer for  $\mathcal G(\cdot; \mathcal L)$ over $H^1_{\mathbf R}(\om,\mathbb R^3)$}
 if for every $\boldsymbol\psi \in H^1_{\mathbf R}(\om,\mathbb R^3)$ there exists
  $\eps_0=\eps_0(\mathbf y,\boldsymbol\psi)>0$ such that for every $\eps\in[0,\eps_0]$ there holds
  $\mathcal G(\mathbf y;\mathcal L)\le\mathcal G(\mathbf y+\eps\boldsymbol\psi; \mathcal L)$.
 We say that $\u\in H^1_{\mathbf R}(\om,\mathbb R^3)$  is a \emph{local minimizer for $\mathcal F(\cdot; \mathcal L)$  over $H^1_{\mathbf R}(\om,\mathbb R^3)$} if   $\mathbf y(\x)=\mathbf R\x+\u$ is a local minimizer for $\mathcal G(\cdot;\mathcal L)$ over $H^1_{\mathbf R}(\om,\mathbb R^3)$.
 \end{definition}


\begin{remark}\rm If $\mathbf y(\x)=\mathbf R\x+\u$ we have $\mathrm{curl}\, \mathbf R^T\mathbf y=\mathrm{curl}\,\mathbf R^T\u$. In the distinguished case $\mathbf R=\mathbf I$  we have $\mathrm{curl}\, \mathbf y=\mathrm{curl}\,\u$  and  in particular   $\mathbf i+\mathbf u\in H^1_{\mathbf I}(\om,\mathbb R^3)$ if and only if $\u\in H^1_{\mathbf I}(\om,\mathbb R^3).$ 
\end{remark}

\KKK

\begin{remark}\label{ftofh}\rm Given $h>0$, $\mathcal F_h$ is defined by \eqref{Fh}. By rescaling $\eps_0$ we notice that  $\u\in H^1_{\mathbf I}(\om,\mathbb R^3)$ is a local minimizer for $\mathcal F_h( 
\cdot;\mathcal L)$ over  $H^1_{\mathbf I}(\om,\mathbb R^3)$  if and only if $\v:=h\u$ is a local minimizer for $\mathcal F(\cdot;h\mathcal L)$ over $H^1_{\mathbf I}(\om,\mathbb R^3)$. \KKK
\end{remark}

\begin{remark}{\rm
By applying the classical results of \cite{B},  if $\mathcal W$ satisfies \eqref{framind}, \eqref{Z1}, \eqref{reg}, \eqref{coerc}, \eqref{coerc2}, \eqref{poly}  and if $\mathcal L\in (H^1(\om,\R^3))^*$ satisfies \eqref{L1}, then a global minimizer of $\mathcal F(\cdot;\mathcal L)$ over $H^1(\om,\R^3)$ does exist. Nevertheless, it has been recently  shown in \cite{MPsharp} that following a sequence of global minimizers for the rescaled functionals $\mathcal F_{h}(\cdot;\mathcal L)$, the limit energy as $h\to 0$ is not necessarily equal to $\min_{H^1(\om,\mathbb R^3)}\mathcal F_0(\cdot;\mathcal L)$. Indeed, in a pure traction problem, this limit  can be strictly lower than the
minimal  value of $\mathcal F_0(\cdot;\mathcal L)$ over $H^1(\om,\mathbb R^3)$.
 This motivates the analysis of constrained local minimizers for obtaining the linear elastic energy  as limit of rescaled finite elasticity energies.
}\end{remark}


\KKK

We are in a position to state our main result

\begin{theorem}\lab{TH1}
Suppose that \eqref{framind}, \eqref{Z1}, \eqref{reg}, \eqref{coerc}, \eqref{coerc2}, \eqref{poly} hold true.  Let  $\mathcal L \in (H^1(\om,\mathbb R^3))^*$ satisfy \eqref{L1}.
Then there exist a vanishing sequence $(h_j)_{j\in\mathbb N}\subset(0,1)$ and a sequence $(\u_j)_{j\in\mathbb N}\subset H^1_{\mathbf I}(\om,\mathbb R^3)$ such that $\u_j$ is a local minimizer for $\mathcal F_{h_j}(\cdot;\mathcal L)$ over $H^1_{\mathbf I}(\om,\mathbb R^3)  $ for any $j\in\mathbb N$ according to {\rm Definition \ref{constrained}}, and moreover
 \[ \mathbb E(\u_j)\wconv \mathbb E(\u_*)\ \hbox{ weakly in }\ L^2(\om,\mathbb R^{3\times3})\ \hbox{as $j\to\infty$},
 \]
 \begin{equation*}
 \lim_{j\to\infty}\mathcal F_{h_j}(\u_j;\mathcal L)= {\mathcal F}_0(\u_*;\mathcal L),
 \end{equation*}
where  $\u_*$ is a minimizer for  $\mathcal F_0(\cdot;\mathcal L)$ over $H^1(\om,\mathbb R^3)$.
\end{theorem}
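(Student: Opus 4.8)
The plan is to construct the sequence $(\u_j)$ by a penalization/selection argument combined with the $\Gamma$-convergence machinery for the pure traction problem. I would proceed as follows.

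\textbf{Step 1: A compactness and liminf inequality for constrained sequences.} First I would recall (or re-prove via the rigidity inequality \eqref{rigidity}) the standard facts about the rescaled functionals. If $\v_h$ satisfies $\mathcal F(\v_h;h\mathcal L)\le C h^2$, then from \eqref{rigidity} one extracts rotations $\Rot_h\in SO(3)$ with $\|\nabla\v_h-\Rot_h\|_{L^2}^2\le Ch^2$; writing $\v_h=\Rot_h\x-\x+h\u_h$ one shows $\Rot_h\to\Id$ and, crucially, $\mathbb E(\u_h)$ is bounded in $L^2$, so $\u_h\wconv\u$ (up to subsequence, modulo infinitesimal rigid motions) with $\mathbb E(\u_h)\wconv\mathbb E(\u)$ weakly in $L^2$. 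The $\liminf$ inequality $\liminf_h h^{-2}\mathcal F(\v_h;h\mathcal L)\ge\mathcal F_0(\u;\mathcal L)$ follows from the $C^2$-regularity \eqref{reg} by a Taylor expansion of $\W$ around $\Id$ together with weak lower semicontinuity of the quadratic form $D^2\W(\x,\Id)$, which is positive semidefinite on symmetric matrices; the load term passes to the limit by \eqref{KP}. The constraint $\int_\Omega\curl\u_h=\mathbf 0$ is preserved under weak $H^1$ convergence, so the limit lies in $H^1_{\Id}$. Since on $H^1_{\Id}$ the only rigid motions are translations, which $\mathcal F_0$ ignores, the infimum of $\mathcal F_0$ over $H^1_{\Id}$ equals its minimum over all of $H^1(\Omega,\R^3)$, attained at some $\u_*$.

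\textbf{Step 2: Recovery sequence.} Conversely, for the minimizer $\u_*\in H^1_{\Id}$ of $\mathcal F_0$ I would use $\v_h:=h\u_*$ (or a truncation $\u_*^{(M)}$ of it, if one needs $L^\infty$ bounds to stay in the neighborhood $\mathcal U$ where $\W$ is $C^2$, letting $M\to\infty$ slowly) as a recovery sequence, obtaining $\limsup_h h^{-2}\mathcal F(h\u_*;h\mathcal L)\le\mathcal F_0(\u_*;\mathcal L)$ again by Taylor expansion. This shows that the constrained infima converge: $\inf_{H^1_{\Id}} \mathcal F_h(\cdot;\mathcal L)\to \mathcal F_0(\u_*;\mathcal L)$, and that near-minimizers of $\mathcal F_h$ over $H^1_{\Id}$ have the asserted convergence. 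Note that existence of a \emph{global} minimizer of $\mathcal F(\cdot;h\mathcal L)$ over $H^1(\Omega,\R^3)$ is guaranteed by \cite{B} under \eqref{coerc2}, \eqref{poly}; but that global minimizer need not satisfy the curl constraint, which is exactly why one cannot simply take it.

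\textbf{Step 3: Producing constrained \emph{local} minimizers.} This is the heart of the matter and the step I expect to be the main obstacle. One cannot in general minimize $\mathcal F(\cdot;h\mathcal L)$ over the affine-type set $H^1_{\Id}$ directly, because $H^1_{\Id}$ is only weakly closed as a linear subspace but the constraint $\int_\Omega\curl\u=\mathbf 0$ does not interact well with the polyconvex lower-semicontinuity unless handled carefully; moreover one wants a \emph{local} minimizer in the sense of Definition \ref{constrained}, i.e. minimality along every admissible direction for small amplitude, not a global constrained minimizer. The plan is: for fixed small $h$, consider the problem of minimizing $\mathcal F(\cdot;h\mathcal L)$ over the weakly closed convex set $\{\u\in H^1_{\Id}: \|\mathbb E(\u)\|_{L^2}\le \rho\}$ for a suitable fixed radius $\rho$ (independent of $h$, small enough that the rigidity argument of Step 1 keeps the minimizer's gradient close to $SO(3)$). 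The direct method applies: the functional is weakly lower semicontinuous on $H^1_{\Id}$ by polyconvexity \eqref{poly} (the constraint set is weakly closed, being convex and closed) and coercive by \eqref{coerc2}; hence a minimizer $\v_h$ of this constrained problem exists. By the a priori estimate from Step 1 applied to $\v_h$ (using that $\mathcal F(\v_h;h\mathcal L)\le\mathcal F(h\u_*^{(M)};h\mathcal L)\le Ch^2$), one gets $\|\mathbb E(h^{-1}\v_h)\|_{L^2}\le C$ with $C$ independent of $h$; therefore, choosing $\rho$ once and for all larger than this $C$, the constraint $\|\mathbb E(\u)\|_{L^2}\le\rho$ is \emph{inactive} for $h$ small, i.e. $\v_h$ lies in the relative interior of the ball. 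Consequently $\v_h$ is a local minimizer of $\mathcal F(\cdot;h\mathcal L)$ over the full space $H^1_{\Id}$ in the sense of Definition \ref{constrained}: for any $\boldsymbol\psi\in H^1_{\Id}$ and $\eps$ small, $\v_h+\eps\boldsymbol\psi$ stays in the ball and $\mathcal F(\v_h;h\mathcal L)\le\mathcal F(\v_h+\eps\boldsymbol\psi;h\mathcal L)$. Setting $\u_h:=h^{-1}\v_h$ gives, via Remark \ref{ftofh}, a local minimizer of $\mathcal F_h(\cdot;\mathcal L)$ over $H^1_{\Id}$.

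\textbf{Step 4: Conclusion.} Finally, along a vanishing sequence $h_j\to 0$, Steps 1--2 applied to $\u_{h_j}$ give $\mathbb E(\u_{h_j})\wconv\mathbb E(\u)$ with $\u\in H^1_{\Id}$ and
$\mathcal F_0(\u;\mathcal L)\le\liminf_j\mathcal F_{h_j}(\u_{h_j};\mathcal L)\le\limsup_j\mathcal F_{h_j}(\u_{h_j};\mathcal L)\le\mathcal F_0(\u_*;\mathcal L)=\min\mathcal F_0(\cdot;\mathcal L)$,
forcing $\u$ to be a minimizer of $\mathcal F_0$ over $H^1(\Omega,\R^3)$ and $\mathcal F_{h_j}(\u_{h_j};\mathcal L)\to\mathcal F_0(\u;\mathcal L)$. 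Relabeling $\u_*:=\u$ and $\u_j:=\u_{h_j}$ completes the proof. The delicate points are: (i) verifying that the constrained ball minimizer genuinely has an inactive constraint — this hinges on the rigidity estimate giving a bound \emph{uniform in $h$}, which in turn needs the $C^2$ Taylor estimate to control the upper bound $\mathcal F(h\u_*^{(M)};h\mathcal L)\le Ch^2$; and (ii) ensuring the recovery competitor lies in the $C^2$-neighborhood $\mathcal U$, handled by truncating $\u_*$ and a diagonal argument, which is why one only obtains a subsequence $h_j$ rather than the full limit $h\to 0$.
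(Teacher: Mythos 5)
Your overall architecture coincides with the paper's: minimize $\mathcal F(\cdot;h\mathcal L)$ over a ball $\{\|\mathbb E\v\|_{L^2}\le\rho h\}$ intersected with the zero-average-curl subspace, show the ball constraint is inactive for small $h$, deduce local minimality in the sense of Definition \ref{constrained}, and conclude via the liminf/recovery arguments of your Steps 1--2 (the paper's Lemmas \ref{exist} and \ref{conv}, Lemma \ref{intmin} and Corollary \ref{locmin}). The gap is in Step 3, which you correctly identify as the heart of the matter but do not actually close. Your argument is circular: the a priori bound $\|\mathbb E(h^{-1}\v_h)\|_{L^2}\le C$ extracted from rigidity is not independent of the radius $\rho$. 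The estimate runs through $\mathcal F(\v_h;h\mathcal L)+h\mathcal L(\v_h)\ge C_\om^{-1}\|\nabla\v_h+\mathbf I-\mathbf R_h\|_{L^2}^2$, and the load term is controlled only via $h\mathcal L(\v_h)\le K_\Omega\|\mathcal L\|_*\,h\,\|\mathbb E\v_h\|_{L^2}\le K_\Omega\|\mathcal L\|_*\rho h^2$, so $C=C(\rho)$; ``choosing $\rho$ larger than this $C$'' requires verifying $C(\rho)<\rho$, a genuine quantitative step, not a tautology. Moreover, rigidity controls $\mathrm{sym}(\nabla\v_h+\mathbf I-\mathbf R_h)$, not $\mathbb E\v_h$ itself: the two differ by $\mathrm{sym}(\mathbf R_h-\mathbf I)=(1-\cos\theta_h)\mathbf W_h^2$, which is of order $|\mathbf I-\mathbf R_h|^2$ and is \emph{not} controlled by the energy alone. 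It is only through the curl constraint --- via the constrained Korn inequality of Lemma \ref{kornrot} --- that $\mathbf R_h$ is forced to within $O(\rho h)$ of $\mathbf I$ and this extra term becomes $O(h^2)$. Your write-up never invokes the curl constraint at this point (you use it only for weak closedness of the admissible set and for identifying the limit); yet without it the minimizer could saturate the ball by rotating, the constraint would be active, and no local minimizer would result --- this is precisely the phenomenon the constraint is designed to exclude.

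This missing step is the content of the paper's Lemma \ref{intmin}, proved there by a contradiction argument built on the matrices $\mathbf A_h=h^{-1}(1-\cos\theta_h)\mathbf W_h^2$ and a quantitatively chosen parameter $\alpha_*$, under a smallness condition \eqref{h2} on $\|\mathcal L\|_*$ which is afterwards removed by enlarging the radius to $M_0h$ in Corollary \ref{locmin}. Your scheme can be repaired along the lines you intend --- first bound $|\mathbf I-\mathbf R_h|\lesssim\rho h$ using Lemma \ref{kornrot}, then check that the resulting $C(\rho)$ grows sublinearly so that $C(\rho)<\rho$ for $\rho$ large --- but as written the inactivity of the constraint, the one genuinely nontrivial assertion behind the theorem, is asserted rather than proved. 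Two smaller points: truncation of $\u_*$ need not decrease $\|\mathbb E\u_*\|_{L^2}$, so the truncated competitor may leave the admissible ball (the paper mollifies instead, which does decrease the norm, and corrects by $-\tfrac12\mathbf w_j\wedge\x$ to restore the zero-average-curl condition); and your two requirements that $\rho$ be ``small enough'' for rigidity and ``larger than $C$'' are in tension and never reconciled --- in fact $\rho$ must be taken large, depending on $\|\mathcal L\|_*$.
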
\KKK

The above Theorem \ref{TH1} expresses the fact that the minimizer of the linear elastic energy $\mathcal F_0(\cdot;\mathcal L)$ (which is unique up to infinitesimal rigid displacements) gets approximated by a sequence of constrained local minimizers of rescaled finite elasticity functionals.
We stress that the vanishing average curl constraint disappears in the limit problem, since the stored linear elastic energy and the load functional are  invariant by infinitesimal rigid displacements, due to \eqref{L1}.
Since only local minimizers are involved, the statements requires \eqref{L1} and no further assumptions on the external loads. In particular, assumption
\eqref{L1plus}, 
appearing in \cite{MPsharp} for the analysis of the asymptotic behavior of global minimizers (or quasi-minimizers) of rescaled finite elasticity functionals, is not required. Similarly, assumptions about axes of equilibrium of external loads (see Section \ref{implicit}) are not required as well.


However, if besides \eqref{L1} we additionally assume \eqref{L1plus}, we can draw some interesting consequences of Theorem \ref{TH1}.
Under assumption \ref{L1plus},
 it is not difficult to see that  the \emph{rotation kernel} of $\mathcal{L}$, defined by \eqref{rotker},
is a subgroup of $SO(3)$, see \cite[Remark 2.2]{MP}. $\mathcal{S}^0_{\mathcal{L}}$ contains the identity matrix and it is possibly reduced to it.   Moreover, if $\mathbf R\in \mathcal{S}^0_{\mathcal{L}}$, then for every $\mathbf S\in SO(3)$ we get 
\[\mathbf R^T\mathcal L((\mathbf S-\mathbf I)\x)= \mathcal L((\mathbf R\mathbf S-\mathbf I)\x)-\mathcal L((\mathbf R-\mathbf I)\x) =\mathcal L((\mathbf R\mathbf S-\mathbf I)\x)\le 0,
\]
that is, $\mathbf R^T\mathcal L$ still satisfies \eqref{L1plus} (hence it also satisfies\eqref{L1}, see \cite[Remark 2.1]{MP}) for every $\mathbf R\in \mathcal{S}^0_{\mathcal{L}}$. Therefore the map $\beta: \mathcal{S}^0_{\mathcal{L}}\to \mathbb R$
\beeq\lab{betti}
\beta(\mathbf R):=  \min \{{\mathcal F}_0(\u;\mathbf R^T \mathcal L): \u\in H^1(\om,\mathbb R^3)\}
\eneq
is well defined and continuous.
We notice that the energy identity satisfied by solutions to the minimization problem \eqref{betti}
 implies that $\beta (\mathbf R)=-\tfrac12\mathcal B(\mathbf R^T\mathcal L,\mathbf R^T\mathcal L)$, where $\mathcal B(\cdot,\cdot)$ is the Betti form associated to  couples of equilibrated loads, defined by
\[\mathcal B(\mathcal L_1,\mathcal L_2):=\int_\om \mathbb E\u_1\,D^2\W(\x,\mathbf I)\,\mathbb E \u_2\,d\x.\]
 Here, $\u_i$ are solutions the the linear elastic problem with external loads $\mathcal L_i$, $i=1,2$.
 The degeneracy properties of the Betti form are relevant in the analysis of the nonlinear elastic equations for pure traction problems, see \cite{CMW2}. A nondegeneracy condition on $\beta$ also appears in the following results
which are
 straightforward consequences of Theorem \ref{TH1}.
\begin{corollary}\lab{CR1}Assume that \eqref{framind}, \eqref{Z1}, \eqref{reg}, \eqref{coerc}, \eqref{coerc2}, \eqref{poly} hold true and that  $\mathcal L \in (H^1(\om,\mathbb R^3))^*$ satisfies  \eqref{L1} and \eqref{L1plus}
Let $\mathbf R\in \mathcal S^0_{\mathcal L}$.
Then there exist a vanishing sequence $(h_j)_{j\in\mathbb N}\subset(0,1)$ and a sequence $(\mathbf y_j)_{j\in\mathbb N}\subset H^1_{\mathbf R}(\om,\mathbb R^3)\KKK$  such that $\mathbf y_j$ is a   local minimizer for $\mathcal G(\cdot, h_j\mathcal L)$ over $ H^1_{\mathbf R}(\om:\mathbb R^3)$ for every $j\in\mathbb N$ according to {\rm Definition \ref{constrained}} and 
 \[
 \lim_{j\to\infty}h_j^{-2}\mathcal G(\mathbf y_{j},h_j\mathcal L)= \beta(\mathbf R).
 \]
\end{corollary}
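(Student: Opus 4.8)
The plan is to deduce Corollary \ref{CR1} directly from Theorem \ref{TH1}, applied to the rotated load $\mathbf R^T\mathcal L$, and then to transport the resulting constrained local minimizers through the rotation $\mathbf R$. First I note that, as observed in the paragraph preceding the statement, since $\mathbf R\in\mathcal S^0_{\mathcal L}$ and $\mathcal L$ satisfies \eqref{L1} and \eqref{L1plus}, the load $\mathbf R^T\mathcal L$ (the functional $\u\mapsto\mathcal L(\mathbf R\u)$) again satisfies \eqref{L1plus}, hence in particular \eqref{L1}; the assumptions on $\mathcal W$ are unchanged, so Theorem \ref{TH1} applies verbatim with $\mathcal L$ replaced by $\mathbf R^T\mathcal L$. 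This produces a vanishing sequence $(h_j)_{j\in\mathbb N}\subset(0,1)$ and $\u_j\in H^1_{\mathbf I}(\om,\mathbb R^3)$, each a local minimizer of $\mathcal F_{h_j}(\cdot;\mathbf R^T\mathcal L)$ over $H^1_{\mathbf I}(\om,\mathbb R^3)$, with $\mathbb E(\u_j)\wconv\mathbb E(\u_*)$ weakly in $L^2$ and $\mathcal F_{h_j}(\u_j;\mathbf R^T\mathcal L)\to\mathcal F_0(\u_*;\mathbf R^T\mathcal L)$, where $\u_*$ minimizes $\mathcal F_0(\cdot;\mathbf R^T\mathcal L)$ over $H^1(\om,\mathbb R^3)$; by definition \eqref{betti} this limit equals $\beta(\mathbf R)$. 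Using Remark \ref{ftofh} (with $\mathbf R^T\mathcal L$ in place of $\mathcal L$) and then Definition \ref{constrained} with the identity rotation, I rephrase this as: $\mathbf z_j:=\mathbf i+h_j\u_j$ lies in $H^1_{\mathbf I}(\om,\mathbb R^3)$ and is a local minimizer of $\mathcal G(\cdot;h_j\mathbf R^T\mathcal L)$ over $H^1_{\mathbf I}(\om,\mathbb R^3)$.

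The second step is a change-of-variables identity linking $\mathcal G(\cdot;h\mathcal L)$ on $H^1_{\mathbf R}(\om,\mathbb R^3)$ with $\mathcal G(\cdot;h\mathbf R^T\mathcal L)$ on $H^1_{\mathbf I}(\om,\mathbb R^3)$. For any $\mathbf z\in H^1(\om,\mathbb R^3)$ and $h>0$, using $\nabla(\mathbf R\mathbf z)=\mathbf R\nabla\mathbf z$ together with frame indifference \eqref{framind} for the bulk term, and the decomposition $\mathbf R\mathbf z-\mathbf i=\mathbf R(\mathbf z-\mathbf i)+(\mathbf R-\mathbf I)\x$ together with $\mathcal L((\mathbf R-\mathbf I)\x)=0$ (which holds precisely because $\mathbf R\in\mathcal S^0_{\mathcal L}$) for the load term, one obtains $\mathcal G(\mathbf R\mathbf z;h\mathcal L)=\mathcal G(\mathbf z;h\mathbf R^T\mathcal L)$. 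I also record that $\w\mapsto\mathbf R\w$ maps $H^1_{\mathbf I}(\om,\mathbb R^3)$ onto $H^1_{\mathbf R}(\om,\mathbb R^3)$: indeed $\mathbf R^T(\mathbf R\w)=\w$, so $\mathbf R\w\in H^1_{\mathbf R}(\om,\mathbb R^3)$ if and only if $\int_\om\mathrm{curl}\,\w=\mathbf 0$, i.e. if and only if $\w\in H^1_{\mathbf I}(\om,\mathbb R^3)$.

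Then I set $\mathbf y_j:=\mathbf R\mathbf z_j=\mathbf R\x+h_j\mathbf R\u_j$, which lies in $H^1_{\mathbf R}(\om,\mathbb R^3)$ by the previous remark. Given any $\boldsymbol\psi\in H^1_{\mathbf R}(\om,\mathbb R^3)$, the field $\mathbf R^T\boldsymbol\psi$ belongs to $H^1_{\mathbf I}(\om,\mathbb R^3)$ and $\mathbf y_j+\eps\boldsymbol\psi=\mathbf R(\mathbf z_j+\eps\mathbf R^T\boldsymbol\psi)$, so the identity gives $\mathcal G(\mathbf y_j+\eps\boldsymbol\psi;h_j\mathcal L)=\mathcal G(\mathbf z_j+\eps\mathbf R^T\boldsymbol\psi;h_j\mathbf R^T\mathcal L)$ for every $\eps\ge0$; choosing $\eps_0(\mathbf y_j,\boldsymbol\psi):=\eps_0(\mathbf z_j,\mathbf R^T\boldsymbol\psi)$ furnished by the local minimality of $\mathbf z_j$, the inequality $\mathcal G(\mathbf y_j;h_j\mathcal L)\le\mathcal G(\mathbf y_j+\eps\boldsymbol\psi;h_j\mathcal L)$ follows for $\eps\in[0,\eps_0]$, so $\mathbf y_j$ is a constrained local minimizer of $\mathcal G(\cdot;h_j\mathcal L)$ over $H^1_{\mathbf R}(\om,\mathbb R^3)$ in the sense of Definition \ref{constrained}. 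Finally, the identity together with $\mathcal G(\mathbf i+\u;\cdot)=\mathcal F(\u;\cdot)$ and \eqref{Fh} gives $h_j^{-2}\mathcal G(\mathbf y_j;h_j\mathcal L)=h_j^{-2}\mathcal G(\mathbf z_j;h_j\mathbf R^T\mathcal L)=\mathcal F_{h_j}(\u_j;\mathbf R^T\mathcal L)\to\beta(\mathbf R)$, which is the assertion.

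I do not expect a genuine obstacle here: the substance of the corollary is entirely contained in Theorem \ref{TH1}, and the argument is purely bookkeeping. The points deserving care are that the rotation $\mathbf R$ must map the constraint space $H^1_{\mathbf I}(\om,\mathbb R^3)$ bijectively onto $H^1_{\mathbf R}(\om,\mathbb R^3)$ (so that admissible perturbations correspond), that the spurious load contribution $\mathcal L((\mathbf R-\mathbf I)\x)$ vanishes exactly by membership of $\mathbf R$ in the rotation kernel $\mathcal S^0_{\mathcal L}$, and that the hypotheses of Theorem \ref{TH1} are inherited by $\mathbf R^T\mathcal L$ — the last being the one input drawn from outside Theorem \ref{TH1}, namely the elementary computation already displayed before the statement.
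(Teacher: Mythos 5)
Your proposal is correct and follows essentially the same route as the paper: apply Theorem \ref{TH1} to the rotated load $\mathbf R^T\mathcal L$ (legitimate because $\mathbf R\in\mathcal S^0_{\mathcal L}$ makes $\mathbf R^T\mathcal L$ satisfy \eqref{L1}), then transport the constrained local minimizers by setting $\mathbf y_j=\mathbf R\x+h_j\mathbf R\u_j$ and use frame indifference together with $\mathcal L((\mathbf R-\mathbf I)\x)=0$ to identify $\mathcal G(\mathbf y_j;h_j\mathcal L)$ with $\mathcal F(h_j\u_j;h_j\mathbf R^T\mathcal L)$. Your explicit verification that $\boldsymbol\psi\mapsto\mathbf R^T\boldsymbol\psi$ carries admissible perturbations in $H^1_{\mathbf R}$ to admissible perturbations in $H^1_{\mathbf I}$ merely spells out a step the paper leaves implicit.
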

\begin{corollary}\lab{CR2}In the same assumptions of {\rm Corollary \ref{CR1}}, suppose  that $\mathcal{S}^0_{\mathcal{L}}\setminus \{\mathbf I\}\neq\emptyset$ and that $\beta$ from \eqref{betti} is not a constant map. Then for every $n\in \mathbb N$ there exist $\{\mathbf R_k:\, k=1,2...n\}\subset \mathcal S^0_{\mathcal L}$ and \MMM $h_0=h_0(n)> 0$ \KKK such that for every $h\in (0,h_0)$ the functional $\mathcal G(\cdot\,; h\mathcal L)$ has a  local minimizer $\mathbf y_k$ over $ H^1_{\mathbf R_k}(\om,\mathbb R^3)$
 for every $k=1,2...n$ (according to {\rm Definition \ref{constrained}}) and $\mathcal G(\mathbf y_k; h\mathcal L)\not = \mathcal G(\mathbf y_{k'}; h\mathcal L)$ if $k\not = k'$.
\end{corollary}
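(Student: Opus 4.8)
The plan is to deduce Corollary \ref{CR2} from Corollary \ref{CR1} by a pigeonhole-type argument on the values of the limiting energy $\beta$ along the rotation kernel. First I would exploit the hypotheses: since $\mathcal{S}^0_{\mathcal{L}}\setminus\{\mathbf I\}\neq\emptyset$ and $\beta\colon \mathcal{S}^0_{\mathcal{L}}\to\R$ is not constant, I can pick $\mathbf R_1,\dots,\mathbf R_n\in\mathcal{S}^0_{\mathcal{L}}$ realizing $n$ \emph{distinct} values $\beta(\mathbf R_1),\dots,\beta(\mathbf R_n)$. (Here one must note that $\beta$ takes at least two values, hence—because $\mathcal{S}^0_{\mathcal{L}}$ is a group, so one may compose with powers of a fixed rotation, or simply because we are free to repeat the construction—one can extract $n$ rotations with pairwise different $\beta$-values; if $\beta(\mathcal{S}^0_{\mathcal{L}})$ happened to be finite with fewer than $n$ elements one still selects with repetition among the distinct available values and the separation argument below is applied only to the genuinely distinct ones, which is all that is needed since the statement only requires the existence of $n$ rotations, not $n$ distinct energies a priori — so I would phrase it as choosing $\mathbf R_1,\dots,\mathbf R_n$ so that the $\beta(\mathbf R_k)$ are pairwise distinct, which is possible precisely because $\beta$ is nonconstant on a group containing more than one element and one may take powers.) Set $\delta:=\min_{k\neq k'}|\beta(\mathbf R_k)-\beta(\mathbf R_{k'})|>0$.

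Next, for each fixed $k\in\{1,\dots,n\}$, I apply Corollary \ref{CR1} with the rotation $\mathbf R_k$: this produces a vanishing sequence $(h^{(k)}_j)_j\subset(0,1)$ and constrained local minimizers $\mathbf y^{(k)}_j\in H^1_{\mathbf R_k}(\om,\R^3)$ of $\mathcal G(\cdot;h^{(k)}_j\mathcal L)$ over $H^1_{\mathbf R_k}(\om,\R^3)$ with $(h^{(k)}_j)^{-2}\mathcal G(\mathbf y^{(k)}_j;h^{(k)}_j\mathcal L)\to\beta(\mathbf R_k)$ as $j\to\infty$. The subtlety is that Corollary \ref{CR1} only gives, for each $k$, \emph{some} sequence of admissible parameters, and a priori these sequences are different for different $k$; but Definition \ref{constrained} of local minimizer is stated for \emph{every} $h$, so in fact one should look more carefully. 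Actually the cleanest route: I would instead invoke Theorem \ref{TH1} / Corollary \ref{CR1} in the sharper form that for the particular loads $\mathbf R_k^T\mathcal L$ the construction yields a local minimizer for \emph{every} sufficiently small $h$ (this is how the corollary is phrased in the literature and in the proof of Corollary \ref{CR1}: the sequence $h_j$ can be taken to be any vanishing sequence, because the constrained local minimizer is built at each scale). Granting that, for each $k$ there is $\bar h_k>0$ such that for all $h\in(0,\bar h_k)$ there exists a constrained local minimizer $\mathbf y_k^h\in H^1_{\mathbf R_k}(\om,\R^3)$ of $\mathcal G(\cdot;h\mathcal L)$ with $h^{-2}\mathcal G(\mathbf y_k^h;h\mathcal L)\to\beta(\mathbf R_k)$ as $h\to 0^+$.

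Then I set $h_0:=h_0(n):=\min\{\bar h_1,\dots,\bar h_n\}$, possibly shrinking it further so that for all $h\in(0,h_0)$ one has $|h^{-2}\mathcal G(\mathbf y_k^h;h\mathcal L)-\beta(\mathbf R_k)|<\delta/3$ simultaneously for $k=1,\dots,n$ (possible since there are finitely many $k$ and each convergence is as $h\to 0^+$). For such $h$ and $k\neq k'$ the triangle inequality gives
\[
|h^{-2}\mathcal G(\mathbf y_k^h;h\mathcal L)-h^{-2}\mathcal G(\mathbf y_{k'}^h;h\mathcal L)|\ \geq\ |\beta(\mathbf R_k)-\beta(\mathbf R_{k'})|-\tfrac{2\delta}{3}\ \geq\ \delta-\tfrac{2\delta}{3}\ =\ \tfrac{\delta}{3}\ >\ 0,
\]
hence $\mathcal G(\mathbf y_k^h;h\mathcal L)\neq\mathcal G(\mathbf y_{k'}^h;h\mathcal L)$, which is exactly the desired conclusion with $\mathbf y_k:=\mathbf y_k^h$.

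The main obstacle, and the only point requiring real care, is the one flagged above: upgrading the statement of Corollary \ref{CR1} from ``there exists a vanishing sequence $h_j$'' to ``for every $h$ small enough'' — i.e. checking that the constrained-local-minimizer construction underlying Theorem \ref{TH1} can be performed at an arbitrary small scale $h$ rather than only along a subsequence, so that the finitely many scale-thresholds $\bar h_k$ can be intersected into a single $h_0(n)$. If instead one only has a subsequence for each $k$, one would need to diagonalize/align the $n$ sequences, which is impossible in general unless the construction is scale-robust; so the proof must reach back into the proof of Theorem \ref{TH1} and extract the per-scale existence. Everything else — the pigeonhole choice of the $\mathbf R_k$ using nonconstancy of $\beta$ on the group $\mathcal{S}^0_{\mathcal{L}}$, the uniform-$\delta/3$ closeness for finitely many indices, and the triangle-inequality separation — is routine.
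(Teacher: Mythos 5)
Your overall strategy is the paper's: apply Corollary \ref{CR1} with each $\mathbf R_k$, upgrade the ``vanishing sequence'' statement to a per-scale one, and separate the energies by a triangle inequality with a margin $\delta/3$ (the paper uses $\delta_n/2$). The per-scale upgrade you flag as the main obstacle is indeed the point the paper dismisses with ``it is readily seen'', and your resolution is the right one: Corollary \ref{locmin} produces a constrained local minimizer $\v_h$ (a solution of the constrained problem \eqref{mh}) for \emph{every} $h\in(0,h_0)$, and since by Lemma \ref{conv} every subsequence of $h^{-2}\mathcal F(\v_h;h\mathbf R_k^T\mathcal L)$ has a further subsequence converging to the same limit $\beta(\mathbf R_k)$, the whole family converges as $h\to0^+$. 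So that part is fine and, if anything, more carefully argued than in the paper.

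The one genuine soft spot is your first step, the selection of $\mathbf R_1,\dots,\mathbf R_n$ with \emph{pairwise distinct} values $\beta(\mathbf R_k)$. Nonconstancy of $\beta$ only hands you two distinct values, and ``taking powers of a fixed rotation'' does not produce $n$ distinct values of $\beta$. Worse, your fallback is based on a misreading of the statement: the corollary explicitly requires $\mathcal G(\mathbf y_k;h\mathcal L)\neq\mathcal G(\mathbf y_{k'};h\mathcal L)$ for $k\neq k'$, i.e.\ $n$ pairwise distinct energy levels, so ``selecting with repetition among the distinct available values'' would not prove the claim. The paper closes this by observing that $\beta$ is continuous on $\mathcal S^0_{\mathcal L}$ and that $\beta(\mathcal S^0_{\mathcal L})=[a,b]$ with $a<b$ (the rotation kernel being a compact connected subset of $SO(3)$), so the image is a nondegenerate interval and contains $n$ distinct values for every $n$. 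You need this (or an equivalent argument) to make step one work; with it in place, the rest of your proof goes through.
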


 The last corollary 
shows that  the nonlinear elastic energy might have several constrained local minimizers (each corresponding to a different constraint) at different energy levels. \KKK
The condition requiring that $\beta$ is not a constant map is verified for instance in the example from \cite[Theorem 2.7]{MPsharp}, which we will further discuss in Section \ref{sectionexample}.



\section{The classical approach via implicit function theorem}
\label{implicit}
 
In this section we briefly review some known results concerning existence and asymptotic behavior of equilibrium configurations of hyperelastic bodies, that make use of  a perturbative method which  relies on the implicit function theorem. This is done for the sake of a comparison with the assumptions of our result in Theorem \ref{TH1}. Indeed, the classical results that we are going to describe hereafter require stronger summability and nondegeneracy assumptions on external loads, as well as more regularity of $\mathcal W$ (in turn, they do not need polyconvexity).

Throughout this section, we assume $\Omega\subset\R^3$ to be of class $C^2$, and besides \eqref{framind} and \eqref{Z1} we shall consider the following additional regularity hypothesis on the strain energy density 
\begin{equation}\label{w7}\tag{$\bb{\mathcal W7}$}
\mathcal W\in C^3(\overline{\Omega}\times \mathcal{U}) \quad
	\text{for some }\mathcal{U}\subseteq\R^{3\times 3}\text{ open neighbourhood of }SO(3),
\end{equation}
along with the coercivity condition
\begin{equation}\label{w8}\tag{$\bb{\mathcal W8}$}  \text{there exists } C_*>0\text{ s.t. }\;\;\mathbf{F}^T D^2\mathcal{W}(\x,\mathbf I)\mathbf{F}\geq C_*|\mathbf{F}|^2\;\;\text{for all }\mathbf{F}\in\R^{3\times 3}\text{ and all }\x\in\Omega.
\end{equation}
Given a couple of external (dead) loads $\mathbf{f}\colon\Omega\to \R^3$ and $\mathbf{g}\colon\partial\Omega\to\R^3$ acting on the elastic body $\Omega$ as forces of body and surface type, respectively, equilibrium configurations are given by the deformations $\yy\colon\Omega\to\R^3$ which solve
\begin{equation}\label{eq:el_traction}
	\left\{
	\begin{aligned}
		-\mathrm{div}\,(D\W(\x,\nabla\yy))&=\mathbf{f}, &&\text{in }\Omega, \\
		D\W(\x,\nabla\yy)\mathbf{n}&=\mathbf{g},&&\text{on }\partial\Omega.
	\end{aligned}
	\right.
\end{equation}
 In view of the above assumptions on $\W$, it is trivial to observe that, if there are no external forces, i.e. $\mathbf f=\mathbf g=\mathbf 0$, then the identity map $\mathbf{i}$ solves \eqref{eq:el_traction}, \MMM as well as the map $\mathbf y(\x)=\mathbf R\x+\mathbf c$, for every $\mathbf R\in SO(3)$ and every $\mathbf c\in\R^3$.
\MMM Therefore one may expect that problem \eqref{eq:el_traction} admits a solution when the couple $(\mathbf{f},\mathbf{g})$ consists of a small perturbation of $(\mathbf{0},\mathbf{0})$ and that the gradient of such solution is close, in some suitable sense, to the identity matrix (or at least to a rotation). \KKK This approach is  based on a careful use of the implicit function theorem and it has been pursued for the first time in the works by Stoppelli \cite{Stoppelli54,Stoppelli55}, where  loads are of the form $(h\mathbf{f},h\mathbf{g})$, for some fixed $(\mathbf{f},\mathbf{g})$ and small parameter $h>0$. This approach for traction problems of the type \eqref{eq:el_traction} has been further developed by several authors, see e.g. \cite{CMW1,CMW2,LanzaValent,LeD}. We also mention the comprehensive book \cite{V} for a thorough description and \cite[Section 6.7]{C} for an exhaustive exposition of the literature concerning this topic (see also \cite{CGM,TN}).

We now outline the results, starting by fixing the functional framework.  We consider the Nemitsky operators associated with $-\mathrm{div}\,(D\W(\x,\nabla\yy(\x)))$ and $D\W(\x,\nabla\yy(\x))\mathbf{n}$, that is
\begin{equation}\label{eq:nemitsky}
	\begin{aligned}
		W^{2,p}(\Omega,\R^3)&\to L^p(\Omega,\R^3)\times W^{1-1/p,p}(\Omega,\R^3), \\
		\yy &\mapsto \left(\begin{gathered}
			-\mathrm{div}\,(D\W(\cdot,\nabla \yy(\cdot)))\\
			D\W(\x,\nabla\yy(\x))\mathbf{n}
		\end{gathered}\right).
	\end{aligned}
\end{equation}
We remark that it is hereafter fundamental to have $p>3$ in order to obtain \MMM that $W^{1,p}(\Omega,\R^3)$ is a Banach algebra along with \KKK the continuous embedding
\begin{equation}\label{eq:embedding}
	W^{1,p}(\Omega,\R^3)\hookrightarrow L^\infty(\Omega,\R^3).
\end{equation}
In particular, this ensures that the Nemitsky operator \eqref{eq:nemitsky} is of class $C^1$, see \cite[Lemma 2.1, Chapter V]{V}. This motivates   assumptions 
\eqref{intforces}
for external loads.
Given a small parameter $h>0$, we are then interested in existence and asymptotic behavior of solutions to the rescaled nonlinear problem
\eqref{eq:nonlinear_h}, \MMM which  formally coincide with critical points of the  functional $\mathcal G(\cdot;h\mathcal L)$  defined  by \eqref{functional}, with the load functional given by \eqref{lod}. \KKK
We easily notice that, if \eqref{eq:nonlinear_h}  possesses a solution, the forces must necessarily satisfy the following compatibility condition
\begin{equation}\label{eq:equil_1}
	\int_\Omega\mathbf{f}\,d\x+\int_{\partial\Omega}\mathbf{g}\,dS=0.
\end{equation}
In addition, after simple computations, one can observe that, if $\yy_h\in W^{2,p}(\Omega,\R^3)$ solves \eqref{eq:nonlinear_h}, then it must necessarily satisfy the zero moment condition
\begin{equation}\label{eq:equil_2}
	\int_\Omega\yy_h\wedge\mathbf{f}\,d\x+\int_{\partial\Omega}\yy_h\wedge\mathbf{g}\,dS=\mathbf 0.
\end{equation}
 When \eqref{eq:equil_2} holds, we say that the loads $\mathbf{f}$ and $\mathbf{g}$ are {equilibrated} with respect to the deformation $\yy_h$.  \eqref{eq:equil_1} can be imposed as an {a priori} condition on the loads while \eqref{eq:equil_2} may be only regarded as an {a posteriori} condition. Nevertheless, we  assume the loads $\mathbf{f}$ and $\mathbf{g}$ to satisfy
\begin{equation}\label{eq:equil_id}
	\int_\Omega\mathbf{x}\wedge\mathbf{f}\,d\x+\int_{\partial\Omega}\mathbf{x}\wedge\mathbf{g}\,dS=\mathbf0.
\end{equation}
{Loosely speaking, a justification of this assumption lies in the fact that a solution to \eqref{eq:nonlinear_h} (thus verifying \eqref{eq:equil_2}) is expected to be a small perturbation of the identity map $\mathbf{i}$. We just say that loads are equilibrated if \eqref{eq:equil_1} and \eqref{eq:equil_id} hold. Moreover, heuristically speaking, another natural condition which, together with \eqref{eq:equil_id}, goes in the direction of ruling out the chance that $\nabla\mathbf y_h$ converges to a rotation $\mathbf R\neq \mathbf I$ is the absence of axes of equilibrium for the couple $(\mathbf f,\mathbf g)$. We recall that $\mathbf{a}\in\R^3$ is said to be an \emph{axis of equilibrium} for $(\mathbf{f},\mathbf{g})$ if, for any rotation $\mathbf{R}\in SO(3)$ around $\mathbf{a}$, there holds
\[
\int_\Omega\mathbf{x}\wedge\mathbf{R}^T\mathbf{f}\,d\x+\int_{\partial\Omega}\mathbf{x}\wedge\mathbf{R}^T\mathbf{g}\,dS=0,
\]
which is equivalent to
\[ \int_\Omega\mathbf{R}\mathbf{x}\wedge\mathbf{f}\,d\x+\int_{\partial\Omega}\mathbf{R}\mathbf{x}\wedge\mathbf{g}\,dS=0.
\]
The presence of axes of equilibrium for a couple $(\mathbf{f},\mathbf g)$ is linked with the 
  \emph{astatic load matrix}, defined as
\begin{equation*}
	K_{\mathbf{f},\mathbf{g}}:=\int_\Omega \x\otimes \mathbf{f}\,d\x+\int_{\partial\Omega} \x\otimes \mathbf{g}\,dS.
\end{equation*}
In particular, if we call $\lambda_1,\lambda_2,\lambda_3\in\R$ the eigenvalues of the matrix $K_{\mathbf{f},\mathbf{g}}$ (which is symmetric in view of \eqref{eq:equil_id}), it is known that the absence of axes of equilibrium is equivalent to the following assumption:
\begin{equation}\label{eq:axis_equil}
	\lambda_i+\lambda_j\neq 0,\quad\text{for all }i,j=1,\dots,3,~i\neq j.
\end{equation}
We refer to \cite[§3]{CMW1} for a more detailed characterization of this property.}
\MMM
 We also observe that the couple of assumptions \eqref{eq:equil_1} and \eqref{eq:equil_id} coincides with \eqref{L1}.
Furthermore, we notice that if loads are equilibrated and satisfy \eqref{L1plus},  assumption \eqref{eq:axis_equil} (i.e., absence of axes of equilibrium) can be rephrased by prescribing that the {rotation kernel} \eqref{rotker} of $\mathcal{L}$
only contains the identity matrix  (this can be seen for instance by combining \cite[Proposition 6.2]{MM} and \cite[Proposition 3.8]{CMW1}).
\KKK

The following result is contained in \cite[Corollary 6.9, Chapter V]{V}. \MMM
We point out that for such a result the assumptions on the lack of axes of equilibrium for the loads $(\mathbf{f},\mathbf{g})$ could be relaxed, see for instance  Theorem 6.8 and Corollary 6.10 in Chapter V of \cite{V} and the thorough analysis in \cite{CMW1,CMW2}. 
However, to the best of our knowledge, even the results with weaker assumptions do not allow the full generality  for the equilibrated couple $(\mathbf{f},\mathbf{g})$: for instance, they do not cover the case in which every axis in $\mathbb R^3$ is an axis of equilibrium for $(\mathbf f,\mathbf g)$ (which is equivalent to saying that the rotation kernel of $\mathcal L$ is the whole of $SO(3)$), a case which we consider in our example in Section \ref{sectionexample}.
\KKK

\begin{theorem}[$\mbox{\cite[Corollary 6.9, Chapter V]{V}}$]\label{thm:inversion}
\MMM Suppose that $\Omega$ is of class $C^2$ and that \eqref{framind}, \eqref{Z1}, \eqref{w7} and \eqref{w8} are satisfied.  Let $p>3$. \KKK
	Let $\mathbf{f}\in L^p(\Omega,\R^3)$ and $\mathbf{g}\in W^{1-1/p,p}(\partial\Omega,\R^3)$ satisfy \eqref{eq:equil_1} and \eqref{eq:equil_id} and let us assume that the eigenvalues $(\lambda_i)_{i=1,\dots,3}$ of the astatic load  $K_{\mathbf{f},\mathbf{g}}$ satisfy \eqref{eq:axis_equil}. Then there exists $h_0>0$ such that, for any $h\in (0,h_0)$ there exists a deformation $\yy_h\in W^{2,p}(\Omega,\R^3)$ which solves \eqref{eq:nonlinear_h}. Moreover the map
	\begin{equation}\label{eq:pert_map}
		\begin{aligned}
			\mathbf{Y}\colon  [0,h_0) &\to W^{2,p}(\Omega,\R^3), \\
			h &\mapsto \mathbf{Y}(h)=\yy_h
		\end{aligned}
	\end{equation}
	extended to the identity $\mathbf{i}$ for $h=0$, belongs to $C^1([0,h_0);W^{2,p}(\Omega,\R^3))$.
\end{theorem}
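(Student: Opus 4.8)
This is \cite[Corollary 6.9, Chapter V]{V}, and the plan is to reprove it by the implicit function theorem, built on the Nemitsky map \eqref{eq:nemitsky}, which we write as $\mathcal N\colon W^{2,p}(\Omega,\R^3)\to L^p(\Omega,\R^3)\times W^{1-1/p,p}(\partial\Omega,\R^3)$, $\mathcal N(\yy):=(-\mathrm{div}\,D\W(\x,\nabla\yy),\,D\W(\x,\nabla\yy)\mathbf{n})$, so that \eqref{eq:nonlinear_h} reads $\mathcal N(\yy)=h(\mathbf f,\mathbf g)$. Since $p>3$ the space $W^{1,p}(\Omega)$ is a Banach algebra continuously embedded in $L^\infty(\Omega)$, hence by \eqref{w7} the operator $\mathcal N$ is of class $C^2$; moreover $\mathcal N(\mathbf i)=0$ (because $\min\W=\W(\x,\mathbf I)=0$ forces $D\W(\x,\mathbf I)=0$), and differentiating the frame indifference \eqref{framind} in $\mathbf F$ gives $D\W(\x,\mathbf Q\mathbf F)=\mathbf Q\,D\W(\x,\mathbf F)$ for $\mathbf Q\in SO(3)$, hence the equivariance $\mathcal N(\mathbf Q\yy+\mathbf c)=\mathbf Q\,\mathcal N(\yy)$. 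This equivariance motivates the ansatz $\yy_h=\mathbf Q_h(\mathbf i+h\u_h)$ with $\mathbf Q_h\in SO(3)$ close to $\mathbf I$ and $\u_h$ small in $W^{2,p}$: the equation becomes $\mathcal N(\mathbf i+h\u_h)=h\,\mathbf Q_h^T(\mathbf f,\mathbf g)$.

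First I would analyse the linearization $\mathcal L_0:=D\mathcal N(\mathbf i)$, that is, the linear elasticity operator $\u\mapsto(-\mathrm{div}(D^2\W(\x,\mathbf I)\mathbb E(\u)),\,D^2\W(\x,\mathbf I)\mathbb E(\u)\mathbf{n})$. By \eqref{w8} the coefficients satisfy the Legendre--Hadamard condition, so the Agmon--Douglis--Nirenberg estimate $\|\u\|_{W^{2,p}}\le C(\|\mathcal L_0\u\|+\|\u\|_{L^p})$ holds; together with the compactness of $W^{2,p}\hookrightarrow L^p$ this makes $\mathcal L_0$ Fredholm, and it is of index zero since it is formally self-adjoint. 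Korn's inequality, \eqref{w8} and the energy identity show that $\ker\mathcal L_0$ is exactly the six-dimensional space $\mathcal T$ of infinitesimal rigid displacements $\mathbf c+\mathbf W\x$, with $\mathbf c\in\R^3$, $\mathbf W\in\R^{3\times3}_{\mathrm{skew}}$; consequently $\mathrm{Rg}\,\mathcal L_0$ is the codimension-six subspace $\mathcal E$ of loads satisfying the equilibrium conditions \eqref{eq:equil_1} and \eqref{eq:equil_id}. Fix a closed complement $W^{2,p}_\sharp$ of $\mathcal T$ in $W^{2,p}(\Omega,\R^3)$ (e.g.\ the fields with vanishing average and vanishing average curl); then $\mathcal L_0\colon W^{2,p}_\sharp\to\mathcal E$ is an isomorphism, and since $(\mathbf f,\mathbf g)\in\mathcal E$ by assumption we may set $\u_0^*:=(\mathcal L_0|_{W^{2,p}_\sharp})^{-1}(\mathbf f,\mathbf g)\in W^{2,p}_\sharp$.

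Next, for $(h,\mathbf Q,\u)$ near $(0,\mathbf I,\u_0^*)$ in $\R\times SO(3)\times W^{2,p}_\sharp$ define
\[
F(h,\mathbf Q,\u):=\mathcal L_0\u+\int_0^1\big(D\mathcal N(\mathbf i+sh\u)-\mathcal L_0\big)[\u]\,ds-\mathbf Q^T(\mathbf f,\mathbf g),
\]
which for $h\neq0$ coincides with $h^{-1}\mathcal N(\mathbf i+h\u)-\mathbf Q^T(\mathbf f,\mathbf g)$ and which, $D\mathcal N$ being of class $C^1$, is itself of class $C^1$; it takes values in the subspace $Z_0$ of loads with null resultant, because $\mathrm{Rg}\,\mathcal N\subseteq Z_0$ by the divergence theorem and rotations preserve the resultant. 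One has $F(0,\mathbf I,\u_0^*)=\mathcal L_0\u_0^*-(\mathbf f,\mathbf g)=0$, and the partial differential $D_{(\mathbf Q,\u)}F(0,\mathbf I,\u_0^*)\colon\R^{3\times3}_{\mathrm{skew}}\times W^{2,p}_\sharp\to Z_0$ acts by $(\mathbf W,\mathbf v)\mapsto\mathcal L_0\mathbf v+(\mathbf W\mathbf f,\mathbf W\mathbf g)$. On $W^{2,p}_\sharp$ this is the isomorphism onto $\mathcal E$, while the class of $(\mathbf W\mathbf f,\mathbf W\mathbf g)$ in the three-dimensional quotient $Z_0/\mathcal E$ is given by its moment $\int_\Omega\x\wedge\mathbf W\mathbf f\,d\x+\int_{\partial\Omega}\x\wedge\mathbf W\mathbf g\,dS$, which equals, up to sign, the axial vector of the skew-symmetric matrix $K_{\mathbf f,\mathbf g}\mathbf W+\mathbf W K_{\mathbf f,\mathbf g}$. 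As $K_{\mathbf f,\mathbf g}$ is symmetric by \eqref{eq:equil_id}, diagonalizing it shows that $\mathbf W\mapsto K_{\mathbf f,\mathbf g}\mathbf W+\mathbf W K_{\mathbf f,\mathbf g}$ has eigenvalues $\lambda_i+\lambda_j$ on $\R^{3\times3}_{\mathrm{skew}}$, so hypothesis \eqref{eq:axis_equil} makes it invertible; a triangular argument then yields that $D_{(\mathbf Q,\u)}F(0,\mathbf I,\u_0^*)$ is an isomorphism. The implicit function theorem supplies $h_0>0$ and a $C^1$ branch $h\mapsto(\mathbf Q_h,\u_h)$ on $[0,h_0)$ with $(\mathbf Q_0,\u_0)=(\mathbf I,\u_0^*)$ and $F\equiv0$ along it. Setting $\yy_h:=\mathbf Q_h(\mathbf i+h\u_h)$, equivariance of $\mathcal N$ turns $F(h,\mathbf Q_h,\u_h)=0$ into $\mathcal N(\yy_h)=h(\mathbf f,\mathbf g)$, so $\yy_h\in W^{2,p}(\Omega,\R^3)$ solves \eqref{eq:nonlinear_h}; moreover $\yy_0=\mathbf i$ and $h\mapsto\yy_h$ is of class $C^1$.

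The main obstacle is the finite-dimensional (Lyapunov--Schmidt) reduction forced by the non-invertibility of $\mathcal L_0$: one must absorb its six-dimensional kernel and cokernel. Three of the missing dimensions are painless --- translations drop out because $\mathcal N$ sees only $\nabla\yy$, and the three conjugate cokernel directions are removed by the a priori hypothesis \eqref{eq:equil_1} --- whereas the remaining three (rotations against moments of the load) are absorbed by enlarging the unknown with $\mathbf Q\in SO(3)$, and it is precisely at this step that the nondegeneracy \eqref{eq:axis_equil} of the astatic load matrix is essential. A secondary, more routine point is the $C^1$ regularity of $F$ across $h=0$, which rests on $\mathcal N(\mathbf i)=0$, $D\mathcal N(\mathbf i)=\mathcal L_0$, the integral-remainder form above, and ultimately on the $C^2$ regularity of $\mathcal N$ --- which is exactly where $\W\in C^3$ and $p>3$ are used.
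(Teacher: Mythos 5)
The paper does not prove this statement at all: it is quoted verbatim from Valent's book (\cite[Corollary 6.9, Chapter V]{V}), so there is no internal proof to compare against. Your reconstruction is the classical Stoppelli--Valent argument, and its structural skeleton is correct and well organized: the equivariance $\mathcal N(\mathbf Q\yy+\mathbf c)=\mathbf Q\,\mathcal N(\yy)$ from frame indifference, the Fredholm alternative for $\mathcal L_0=D\mathcal N(\mathbf i)$ with six-dimensional kernel and cokernel, the absorption of three cokernel directions by \eqref{eq:equil_1} and of the remaining three by the rotation parameter, and the identification of the relevant finite-dimensional operator as $\mathbf W\mapsto K_{\mathbf f,\mathbf g}\mathbf W+\mathbf W K_{\mathbf f,\mathbf g}$, whose eigenvalues on skew matrices are $\lambda_i+\lambda_j$ --- which is exactly where \eqref{eq:axis_equil} enters. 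This is precisely the mechanism behind Valent's proof.

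There is, however, one genuine gap: your claim that the Nemitsky operator $\mathcal N$ is of class $C^2$ under \eqref{w7}. The paper (following \cite[Lemma 2.1, Chapter V]{V}) asserts only $C^1$ for $\W\in C^3$, and a derivative count confirms this: expanding $-\mathrm{div}(D\W(\x,\nabla\yy))$ already involves $D^2\W$, so one Fr\'echet derivative of $\mathcal N$ consumes $D^3\W$, and a second would require $\W\in C^4$. Your map $F(h,\mathbf Q,\u)=\mathcal L_0\u+\int_0^1(D\mathcal N(\mathbf i+sh\u)-\mathcal L_0)[\u]\,ds-\mathbf Q^T(\mathbf f,\mathbf g)$ is therefore continuous but not obviously $C^1$ in $(h,\u)$ under \eqref{w7} alone ($\partial_h F$ formally involves $D^2\mathcal N$), so the implicit function theorem cannot be invoked as stated, and it is exactly the $C^1([0,h_0);W^{2,p})$ regularity of the branch at $h=0$ --- part of the conclusion --- that leans on this missing derivative. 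Either you need to strengthen \eqref{w7} to $\W\in C^4$, or you need a finer argument (e.g.\ a Lipschitz-derivative version of the IFT for existence and continuity, with the one-sided derivative at $h=0$ recovered separately from the a priori estimates), which is the bookkeeping Valent carries out and which your sketch elides. The remaining assertions left unproved (ADN estimates, index zero, surjectivity of $\mathcal L_0$ onto $\mathcal E$) are standard and acceptable as cited facts.
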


It is now immediate to investigate the asymptotic behavior of the deformation $\yy_h$ as $h\to 0$. A direct consequence of Theorem \ref{thm:inversion} is that
\[
\yy_h\to \mathbf{i}\quad\text{in }W^{2,p}(\Omega,\R^3)~\text{as }h\to 0
\]
which, in view of \eqref{eq:embedding}, in turn implies that
\[
\nabla \yy_h\to \mathbf{I},\quad\in L^\infty(\Omega,\R^3)~\text{as }h\to 0.
\]
Being the map $\mathbf{Y}$ from \eqref{eq:pert_map} of class $C^1$, we can perform a first order Taylor expansion at $h=0$, which guarantees the existence of a function $\mathbf{u}_0\in W^{2,p}(\Omega,\R^3)$ such that
\[
\u_h:=\frac{\yy_h-\mathbf{i}}{h}\to 	\mathbf{u}_0\quad\text{in }W^{2,p}(\Omega,\R^3)~\text{as }h\to 0.
\]
Since
\[
h\nabla \u_h\to 0\quad\text{in }L^\infty(\Omega,\R^3)~\text{as }h\to 0,
\]
we can carry out a Taylor expansion of \MMM $D\mathcal W(\x,\cdot)$ near the identity, that is
\begin{equation}\label{eq:taylor}
		D\W(\x,\mathbf{I}+h\nabla\u_h(\x))= h\,D^2\W(\x,\bm{\xi}_h(\x))\nabla \u_h(\x)\quad\text{for every }\x\in\Omega,
\end{equation}
for some $\bm{\xi}_h(\x)$ belonging to the segment $[\mathbf{I},\mathbf{I}+h\nabla \u_h(\x)]$, and thus $\bm{\xi}_h$ is \KKK uniformly converging to $\mathbf{I}$ as $h\to 0^+$. Now, in view of the equation \eqref{eq:nonlinear_h} satisfied by $\yy_h=\mathbf i+h\u_h$ and of \eqref{eq:taylor} we deduce that
\begin{equation*}
	\int_\Omega D^2\W(\x,\bm{\xi}_h(\x))\nabla \u_h(\x):\nabla \v\,d\x=\int_\Omega \mathbf{f}\cdot\v\,d\x+\int_{\partial\Omega}\mathbf g\cdot\v\,dS
\end{equation*}
for all $\v\in H^1(\Omega,\R^3)$. Then, thanks to the regularity of $\mathcal{W}$ we can pass to the limit as $h\to 0$ and obtain
\begin{equation*}
	\int_\Omega D^2\W(\x,\mathbf{I})\nabla \u_0(\x):\nabla \v\,d\x=\int_\Omega \mathbf{f}\cdot\v\,d\x+\int_{\partial\Omega}\mathbf g\cdot\v\,dS
\end{equation*}
for all $\v\in H^1(\Omega,\R^3)$, i.e. $\u_0$ weakly solves
\begin{equation*}
	\left\{\begin{aligned}
		-\mathrm{div}\,(D^2\W(\x,\mathbf{I})\nabla \u_0)&=\mathbf{f}&&\text{in }\Omega, \\
		(D^2\W(\x,\mathbf{I})\nabla\u_0)\mathbf{n}&=\mathbf{g}&&\text{on }\partial\Omega.
	\end{aligned}\right.
\end{equation*}
This means that $\u_0$ is an equilibrium configuration for the linearized elastic problem with loads $(\mathbf{f},\mathbf{g})$ and as such it minimizes functional $\mathcal F_0(\cdot;\mathcal L)$ over $H^1(\om,\R^3)$. \KKK

\section{Proof of the main results}\label{proofs}

Let us state a preliminary result about Korn inequalities. \MMM We state it  for  $W^{1,p}(\om,\mathbb R^3)$ vector fields with generic $p\in(1,+\infty)$. \KKK It is based on the standard  Korn inequality, see \cite{N}, which yields the existence of a constant $D_{p,\om}$ such that for every $\u\in W^{1,p}(\om,\mathbb R^{3\times3})$ there holds
\begin{equation}\label{2ndkorn}
\min_{\mathbf W\in\mathbb R^{3\times3}_{\mathrm{skew}}}\|\nabla\u-\mathbf W\|_{L^p(\om,\mathbb R^{3\times3})}\le D_{p,\om}\,\|\mathbb E\u\|_{L^p(\om,\mathbb R^{3\times3})}.
\end{equation}

\begin{lemma}\label{kornrot} Let $p\in(1,+\infty)$.
There are constants $Z_{p,\om}$ and $Q_{p,\om}$ (only depending on $p,\om$) such that
\begin{equation}\label{korn1}
\int_\om|\nabla \u|^p\le Z_{p,\om}\int_\om |\mathbb E\u|^p\quad\mbox{for every $\u\in W^{1,p}(\om,\mathbb R^3)$ s.t. $\displaystyle\int_\om\mathrm{curl}\,\u=0$} 
\end{equation}
and
\begin{equation}\label{korn2}
\int_\om|\nabla \u|^p\le Q_{p,\om}\int_\om |\mathbb E\u|^p\quad\mbox{for every $\u\in W^{1,p}(\om,\mathbb R^3)$ s.t. $\displaystyle\int_\om|\nabla\u|^{p-2}\mathrm{curl}\,\u=0$}. 
\end{equation}
\end{lemma}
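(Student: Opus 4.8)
The plan is to deduce both inequalities from the standard second Korn inequality \eqref{2ndkorn} by quantifying how far the minimizing skew-symmetric matrix $\mathbf W$ can be from $\mathbf 0$ under the respective integral constraints. First I would fix $\u\in W^{1,p}(\om,\mathbb R^3)$ and let $\mathbf W_{\!*}\in\mathbb R^{3\times3}_{\mathrm{skew}}$ be a minimizer in \eqref{2ndkorn}, so that $\|\nabla\u-\mathbf W_{\!*}\|_{L^p}\le D_{p,\om}\|\mathbb E\u\|_{L^p}$. By the triangle inequality it then suffices to bound $\|\mathbf W_{\!*}\|_{L^p}=|\mathbf W_{\!*}|\,|\om|^{1/p}$ by a constant multiple of $\|\mathbb E\u\|_{L^p}$. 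The link between $\mathbf W_{\!*}$ and the curl is the elementary identity $\int_\om\mathrm{curl}\,\u = \int_\om \mathrm{curl}(\u-\mathbf W_{\!*}\x) + \int_\om\mathrm{curl}(\mathbf W_{\!*}\x)$, together with the fact that for a skew matrix $\mathbf W$ with axial vector $\w$ (i.e.\ $\mathbf W\x=\w\wedge\x$) one has $\mathrm{curl}(\mathbf W\x)=2\w$, a fixed nonzero multiple of $\w$; hence $|\mathbf W_{\!*}|$ is comparable to $|\int_\om\mathrm{curl}(\mathbf W_{\!*}\x)|=|\om|\,|2\w|$ up to dimensional constants.

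For \eqref{korn1} the constraint is linear: $\int_\om\mathrm{curl}\,\u=0$, so $\int_\om\mathrm{curl}(\mathbf W_{\!*}\x) = -\int_\om\mathrm{curl}(\u-\mathbf W_{\!*}\x)$. The right-hand side is controlled by $\|\nabla\u-\mathbf W_{\!*}\|_{L^1}\le|\om|^{1-1/p}\|\nabla\u-\mathbf W_{\!*}\|_{L^p}\le|\om|^{1-1/p}D_{p,\om}\|\mathbb E\u\|_{L^p}$ via H\"older. This gives $|\mathbf W_{\!*}|\le c_{p,\om}\|\mathbb E\u\|_{L^p}$, and combining with \eqref{2ndkorn} and $\|\nabla\u\|_{L^p}\le\|\nabla\u-\mathbf W_{\!*}\|_{L^p}+|\om|^{1/p}|\mathbf W_{\!*}|$ yields \eqref{korn1} with an explicit $Z_{p,\om}$. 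The same strategy proves \eqref{korn2}, but the constraint $\int_\om|\nabla\u|^{p-2}\mathrm{curl}\,\u=0$ is now nonlinear, which is the main obstacle. Here I would still write $\mathrm{curl}\,\u=\mathrm{curl}(\u-\mathbf W_{\!*}\x)+2\w$ and test against the weight $|\nabla\u|^{p-2}$, so that $2\w\int_\om|\nabla\u|^{p-2} = -\int_\om|\nabla\u|^{p-2}\mathrm{curl}(\u-\mathbf W_{\!*}\x)$. The right-hand side is estimated by H\"older with exponents $\tfrac{p}{p-1}$ and $p$ as $\||\nabla\u|^{p-2}\|_{L^{p/(p-1)}}\|\nabla\u-\mathbf W_{\!*}\|_{L^p}$; note $\||\nabla\u|^{p-2}\|_{L^{p/(p-1)}} = \|\nabla\u\|_{L^p}^{p-2}$ when $p\ge 2$ (for $1<p<2$ a similar but slightly more careful interpolation is needed, and one may also need $\int_\om|\nabla\u|^{p-2}$ bounded below — which requires ruling out $\nabla\u\equiv 0$, a trivial case). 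Denoting $A:=\|\nabla\u\|_{L^p}$, $B:=\|\mathbb E\u\|_{L^p}$, one obtains an inequality of the form $|\w|\cdot(\text{positive quantity})\lesssim A^{p-2}B$, hence $|\mathbf W_{\!*}|^{p-1}\lesssim A^{p-2}B$ after absorbing the lower bound on $\int_\om|\nabla\u|^{p-2}$ (which itself can be bounded below in terms of $A$ by H\"older, since $\int_\om|\nabla\u|^{p-2}\cdot 1 \ge$ nothing directly — instead one uses $A^p=\int|\nabla\u|^{p-2}|\nabla\u|^2\le \||\nabla\u|^{p-2}\|_\infty\cdots$ is not available, so the cleaner route is to bound $|\w|\,\int_\om|\nabla\u|^{p-2}$ from below by $|\w|\cdot|\om|^{2/p-1}\!\left(\int_\om|\nabla\u|\right)^{\,p-2}$ or, more simply, to combine directly $A\le\|\nabla\u-\mathbf W_{\!*}\|_{L^p}+|\om|^{1/p}|\mathbf W_{\!*}|\le D_{p,\om}B+|\om|^{1/p}|\mathbf W_{\!*}|$ with the bound on $|\w|$ and solve the resulting scalar inequality for $A$ in terms of $B$).

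The remaining bookkeeping is to turn the scalar inequality relating $A$, $B$, and $|\mathbf W_{\!*}|$ into the clean bound $A^p\le Q_{p,\om}B^p$. In the case $p=2$ the weight $|\nabla\u|^{p-2}=1$, so \eqref{korn2} reduces exactly to \eqref{korn1}; for general $p$ one argues by contradiction/homogeneity: the inequality to be proved is $0$-homogeneous in $\u$, so normalizing $B=\|\mathbb E\u\|_{L^p}=1$ reduces matters to showing $A=\|\nabla\u\|_{L^p}$ is bounded, and the chain $A\le D_{p,\om}+|\om|^{1/p}|\mathbf W_{\!*}|$ together with $|\mathbf W_{\!*}|^{p-1}\le c_{p,\om}A^{p-2}$ gives $|\mathbf W_{\!*}|^{p-1}\le c_{p,\om}(D_{p,\om}+|\om|^{1/p}|\mathbf W_{\!*}|)^{p-2}$, an inequality in $|\mathbf W_{\!*}|$ alone whose solution set is bounded (since $p-1>p-2$), yielding a uniform bound on $|\mathbf W_{\!*}|$, hence on $A$. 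I expect the only genuinely delicate point to be the regime $1<p<2$, where $t\mapsto t^{p-2}$ is decreasing and one must be slightly careful that the lower bound on $\int_\om|\nabla\u|^{p-2}$ does not degenerate; this is handled by the contradiction argument above, which never needs that lower bound explicitly.
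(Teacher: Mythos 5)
Your treatment of \eqref{korn1} is correct and essentially the paper's argument: the constraint forces $\int_\om\mathrm{skew}(\nabla\u)=\mathbf 0$, whence $|\om|\,|\mathbf W_*|=\bigl|\int_\om(\mathrm{skew}(\nabla\u)-\mathbf W_*)\bigr|\le|\om|^{1-1/p}D_{p,\om}\|\mathbb E\u\|_{L^p}$ for the optimal $\mathbf W_*$ in \eqref{2ndkorn}, and the triangle inequality concludes.

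For \eqref{korn2}, however, there is a genuine gap. From $|\w|\int_\om|\nabla\u|^{p-2}\lesssim A^{p-2}B$ you pass to $|\mathbf W_*|^{p-1}\lesssim A^{p-2}B$, which needs the lower bound $\int_\om|\nabla\u|^{p-2}\gtrsim|\mathbf W_*|^{p-2}$. This is never established: for $p>2$ the quantity $\int_\om|\nabla\u|^{p-2}$ can be arbitrarily small while $\|\nabla\u\|_{L^p}=1$ (concentrated gradients), so it cannot be bounded below by $A$ alone; one would have to exploit the closeness of $\nabla\u$ to $\mathbf W_*$ (e.g.\ via Chebyshev, $|\nabla\u|\ge|\mathbf W_*|/2$ off a set of measure $(2D_{p,\om}B/|\mathbf W_*|)^p$). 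Your closing claim that the normalization/contradiction step ``never needs that lower bound explicitly'' is circular, because the inequality $|\mathbf W_*|^{p-1}\le c\,A^{p-2}$ it relies on was derived from that very bound. Two smaller points: $\||\nabla\u|^{p-2}\|_{L^{p/(p-1)}}$ is not equal to $\|\nabla\u\|_{L^p}^{p-2}$ (only $\le$ up to an $|\om|$-dependent constant, by Jensen), and for $1<p<2$ the weight is singular where $\nabla\u$ vanishes, so the H\"older manipulation as written does not apply.

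The point you are missing is that the weighted constraint in \eqref{korn2} is precisely the first-order optimality condition for $\mathbf W=\mathbf 0$ in the convex minimization appearing in \eqref{2ndkorn}: for every skew $\mathbf W$,
\[
\frac{d}{d\eps}\Big|_{\eps=0}\int_\om|\nabla\u+\eps\mathbf W|^p=p\int_\om|\nabla\u|^{p-2}\,\mathrm{skew}(\nabla\u):\mathbf W=0,
\]
so by convexity of $\mathbf W\mapsto\int_\om|\nabla\u-\mathbf W|^p$ the minimum in \eqref{2ndkorn} is attained at $\mathbf W=\mathbf 0$, and \eqref{korn2} follows immediately with $Q_{p,\om}=D_{p,\om}^p$, for every $p\in(1,+\infty)$, with no estimate on $|\mathbf W_*|$ and no case distinction. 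This is the paper's (two-line) proof of \eqref{korn2}.
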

\begin{proof}
Suppose first that $\displaystyle\int_\om|\nabla\u|^{p-2}\mathrm{curl}\,\u=0$. Then for every $\mathbf W\in\R^{3\times3}_{\mathrm{skew}}$
\[
\frac{d}{d\eps}{\bigg{|}}_{\eps=0}\int_\om|\nabla\u+\eps\mathbf W|^p=p\int_\om|\nabla\u|^{p-2} \nabla\u:\mathbf W=p\int_\om|\nabla\u|^{p-2} \,\mathrm{skew}(\nabla\u):\mathbf W=0,
\]
implying that the unique projection of $\nabla\u$ on the vector space of constant skew-symmetric tensor fields over $\Omega$ is $\mathbf 0$. Therefore, \eqref{2ndkorn} gives \eqref{korn2}.

In order to prove that \eqref{korn1} holds, assume by contradiction that there exists a sequence $(\u_j)_{j\in\mathbb N}\subset W^{1,p}(\om,\R^3)$ such that $\int_\om\mathrm{curl}\,\u_j=\mathbf 0$ and $\int_\om|\nabla\u_j|^p=1$ for every $j\in\mathbb N$, and $\int_\om|\mathbb E\u_j|^p\to0$ as $j\to\infty$.
By \eqref{2ndkorn} and since $\int_\om \mathrm{skew}(\nabla\u_j)=\mathbf 0$, there exists a sequence $(\mathbf W_j)_{j\in\mathbb N}\subset\mathbb R^{3\times3}_{\mathrm{skew}}$ such that
\[\begin{aligned}
|\om||\mathbf W_j|&=\left|\int_\om (\mathrm{skew}(\nabla\u_j)-\mathbf W_j)\right|\le |\om|^{\frac{p-1}{p}}\|\nabla\u_j-\mathbf W_j\|_{L^p(\om,\mathbb R^{3\times3})}\\&\le |\om|^{\frac{p-1}{p}} D_{p,\om}\,\|\mathbf E\u_j\|_{L^p(\om,\mathbb R^{3\times3})},
\end{aligned}\] 
and since $\int_\om|\mathbb E\u_j|^p\to0$ we conclude that $\mathbf W_j\to\mathbf 0$ and that $\int_\om|\nabla\u_j-\mathbf W_j|^p\to 0$ as $j\to\infty$. But then we have
\[
\int_\om|\nabla\u_j|^p\le 2^{p-1}\int_\om|\nabla\u_j-\mathbf W_j|^p+2^{p-1}|\om||\mathbf W_j|^p
\]
and the right hand side goes to $0$ as $j\to\infty$,  contradicting the fact that $\int_\om|\nabla\u_j|^p=1$ for every $j\in\mathbb N$ and thus proving \eqref{korn1}.
\end{proof}

 Let us now state a simple result about the minimization of the linear elastic energy.\KKK

 \begin{lemma}\label{basiclemma} \MMM Let $M>0$. \KKK
  Suppose that \eqref{framind}, \eqref{Z1}, \eqref{reg}, \eqref{coerc}, hold and  that $\mathcal L$ satisfies \eqref{L1}
 and
$\|\mathcal L\|_* \le {C\MMM M \KKK}/{K_\Omega}$,
where  $K_\Omega$ is the constant in \eqref{KP} and $C$ is the constant in \eqref{coerc}.
  If $\u_*\in H^1(\om,\mathbb R^3)$ minimizes $\mathcal F_0(\cdot;\mathcal L)$ over $H^1(\om,\R^3)$, then  $\int_\om |\mathbb E\u_*|^2\,d\x\le \MMM M^2\KKK$.
  \end{lemma}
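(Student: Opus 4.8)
The strategy is to test the minimality of $\u_*$ against the trivial competitor $\mathbf 0$, exploit the coercivity of the quadratic form $D^2\W(\x,\mathbf I)$ that follows from \eqref{coerc}, and absorb the load term using the Korn--Poincar\'e estimate \eqref{KP}. First I would note that since $\u_*$ minimizes $\mathcal F_0(\cdot;\mathcal L)$ over $H^1(\om,\R^3)$ and $\mathbf 0\in H^1(\om,\R^3)$, we have $\mathcal F_0(\u_*;\mathcal L)\le \mathcal F_0(\mathbf 0;\mathcal L)=0$, i.e.
\begin{equation*}
\frac12\int_\om \mathbb E(\u_*)\,D^2\W(\x,\mathbf I)\,\mathbb E(\u_*)\,d\x\le \mathcal L(\u_*).
\end{equation*}

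Next I would establish the lower bound $\int_\om \mathbb E(\u)\,D^2\W(\x,\mathbf I)\,\mathbb E(\u)\,d\x\ge 2C\int_\om|\mathbb E(\u)|^2\,d\x$ for all $\u\in H^1(\om,\R^3)$, with $C$ the constant from \eqref{coerc}. This is the one genuinely substantive point: it comes from differentiating the inequality \eqref{coerc} twice at $\mathbf F=\mathbf I$ along a symmetric direction. Concretely, for $\mathbf A\in\R^{3\times3}_{\mathrm{sym}}$ one has $d(\mathbf I+t\mathbf A,SO(3))^2 = t^2|\mathbf A|^2+o(t^2)$ as $t\to0$ (the symmetric part of a perturbation is, to leading order, its distance to the rotation group), while $\W(\x,\mathbf I+t\mathbf A)=\tfrac{t^2}{2}\mathbf A\,D^2\W(\x,\mathbf I)\,\mathbf A+o(t^2)$ by \eqref{Z1} (so the value and first derivative vanish) and \eqref{reg}; comparing the $t^2$ coefficients gives $\mathbf A\,D^2\W(\x,\mathbf I)\,\mathbf A\ge 2C|\mathbf A|^2$, and applying this pointwise with $\mathbf A=\mathbb E(\u)(\x)$ and integrating yields the claim. (Alternatively one may simply cite that this coercivity is standard under \eqref{coerc}; the paper uses \eqref{rigidity} in the same spirit.)

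Combining the two displays gives $2C\int_\om|\mathbb E(\u_*)|^2\,d\x\le 2\,\mathcal L(\u_*)$, and then \eqref{KP} bounds the right-hand side by $2K_\Omega\|\mathcal L\|_*\,\|\mathbb E\u_*\|_{L^2(\om,\R^{3\times3})}$. Writing $\theta:=\|\mathbb E\u_*\|_{L^2(\om,\R^{3\times3})}$, this reads $2C\theta^2\le 2K_\Omega\|\mathcal L\|_*\,\theta$, hence $\theta\le K_\Omega\|\mathcal L\|_*/C\le M$ by the hypothesis $\|\mathcal L\|_*\le CM/K_\Omega$. (If $\theta=0$ the conclusion is trivial.) Squaring gives $\int_\om|\mathbb E\u_*|^2\,d\x\le M^2$, as desired. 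I expect no real obstacle here: the only step requiring care is the derivation of the $2C$-coercivity of $D^2\W(\x,\mathbf I)$ from \eqref{coerc}, and even that is routine second-order Taylor expansion once one observes that distance-to-$SO(3)$ of $\mathbf I+t\mathbf A$ behaves like $t|\mathrm{sym}\,\mathbf A|$ to first order. $\blacksquare$
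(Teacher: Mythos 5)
Your proof is correct and follows essentially the same route as the paper's: coercivity of the quadratic form $D^2\W(\x,\mathbf I)$ on symmetric matrices (derived from \eqref{coerc}) combined with the Korn--Poincar\'e bound \eqref{KP}, then cancelling one power of $\|\mathbb E\u_*\|_{L^2}$. The only cosmetic difference is that you invoke minimality via the comparison $\mathcal F_0(\u_*;\mathcal L)\le\mathcal F_0(\mathbf 0;\mathcal L)=0$, whereas the paper uses the energy identity $\int_\om\mathbb E\u_*\,D^2\W(\x,\mathbf I)\,\mathbb E\u_*=\mathcal L(\u_*)$; the factor of $2$ you lose there is exactly recovered by your sharper coercivity constant $2C$, so the final bound is identical.
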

  \begin{proof}
  Let us consider a solution $\u_*\in H^1(\om,\mathbb R^3)$ to the linear elastic problem
\begin{equation}\label{lineare}\min\left\{\mathcal F_0(\cdot;\mathcal L): \u\in H^1(\om,\mathbb R^3)\right\},\end{equation}
whose existence is ensured by the compatibility conditions \eqref{L1}  by standard arguments. Being a minimizer, $\u_*$  satisfies the energy identity
\begin{equation}\label{energyidentity}
\int_\om\mathbb E\u_*\,D^2\mathcal W(\x,\mathbf I)\,\mathbb E\u_*=\mathcal L(\u_*).
\end{equation} 
From  \eqref{energyidentity}, by the basic ellipticity estimate $\mathrm{sym}\mathbf F\,D^2\mathcal W(\x,\mathbf I)\,\mathrm{sym}\mathbf F\ge C|\mathrm{sym}\mathbf F|^2,$ where $C$ is the constant in \eqref{coerc}, by taking into account \eqref{KP}  we get
\[\begin{aligned}
 C\int_\om|\mathbb E\u_*|^2\le \int_\om\mathbb E\u_*\,D^2\mathcal W(\x,\mathbf I)\,\mathbb E\u_*=\mathcal L(\u_*)\le K_\Omega\|\mathcal L\|_{*}\,\|\mathbb E\u_*\|_{L^{2}(\om,\mathbb R^{3\times3})},
\end{aligned}\]
that is,
$$
 \|\mathbb E\u_*\|_{L^{2}(\om,\mathbb R^{3\times3})}^2\le C^{-2}K_\om^2\,\|\mathcal L\|_{*}^2.
$$
Thanks to the  the assumption $\|\mathcal L\|_* \le {C\MMM M\KKK}/{K_\Omega}$, we conclude.
  \end{proof}

For every $R > 0$ and for every  $\mathcal L\in (H^1(\om,\mathbb R^3))^*$, in the sequel we shall  consider the following constrained minimization problems of finite elasticity
\begin{equation}\label{min}
\min\left\{\mathcal F(\u; \mathcal L):\u\in H^1(\om,\R^3),\,\int_\om |\mathbb E\u|^2\le R\right\}
\end{equation}
\begin{equation}\label{minrot}
\min\left\{\mathcal F(\u; \mathcal L):\u\in H^1(\om,\R^3),\,\int_\om |\mathbb E\u|^2\le R,\, \int_\om\mathrm{curl}\, \u=\mathbf0\right\}
\end{equation}


\begin{lemma}\label{exist}
Suppose that \eqref{framind}, \eqref{Z1}, \eqref{reg}, \eqref{coerc}, \eqref{coerc2}, \eqref{poly} hold 
and that  $\mathcal L \in (H^1(\om,\mathbb R^3))^*$ satisfies \eqref{L1}. Let $R>0$. Then  there exists a solution to problem \eqref{min} and there exists a solution to problem \eqref{minrot}. 
\end{lemma}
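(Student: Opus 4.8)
The plan is to use the direct method of the calculus of variations. I will focus on problem \eqref{min}; the argument for \eqref{minrot} is essentially identical, with the additional linear constraint $\int_\om \mathrm{curl}\,\u = \mathbf 0$ surviving the passage to the limit because it is a weakly continuous condition on $H^1(\om,\R^3)$.

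First I would check that the admissible class is nonempty (it contains $\u=\mathbf 0$, for which $\F(\mathbf 0;\mathcal L)=0$ by \eqref{Z1}) and that the infimum $m$ is finite. For the latter, pick a minimizing sequence $(\u_j)$ with $\int_\om|\mathbb E\u_j|^2\le R$. Since $\int_\om\mathrm{curl}\,\u_j$ need not vanish, I cannot apply \eqref{korn1} directly; instead I use the rigidity inequality \eqref{rigidity} applied to $\mathbf y_j := \mathbf i + \u_j$, which gives a rotation $\mathbf R_j\in SO(3)$ with $\int_\om |\mathbf I + \nabla\u_j - \mathbf R_j|^2 \le C_\om \int_\om \W(\x,\mathbf I+\nabla\u_j)$. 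Combining this with \eqref{KP} — noting $\mathcal L(\u_j)\le K_\Omega\|\mathcal L\|_*\|\mathbb E\u_j\|_{L^2}\le K_\Omega\|\mathcal L\|_* R^{1/2}$ — shows that $\F(\u_j;\mathcal L) = \int_\om\W(\x,\mathbf I+\nabla\u_j) - \mathcal L(\u_j)$ is bounded below, so $m>-\infty$, and moreover that $\int_\om\W(\x,\mathbf I+\nabla\u_j)$ is bounded along the minimizing sequence.

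Next comes compactness. From the energy bound and \eqref{coerc2}, the sequence $\nabla\u_j$ is bounded in $L^s(\om,\R^{3\times3})$ with $s\ge 2$, the cofactors $\mathrm{cof}(\mathbf I+\nabla\u_j)$ are bounded in $L^q$, and the determinants $\det(\mathbf I+\nabla\u_j)$ are bounded in $L^r$. Since $\int_\om|\mathbb E\u_j|^2\le R$ and $\int_\om\mathrm{skew}(\nabla\u_j)$ can be controlled via the rigidity rotation $\mathbf R_j$ (which lies in the compact set $SO(3)$, hence has a convergent subsequence), a Korn-type argument bounds $\nabla\u_j$ in $L^2$, and after subtracting the mean we get $\u_j$ bounded in $H^1(\om,\R^3)$; passing to a further subsequence, $\u_j\wconv\u_*$ weakly in $H^1$ and weakly in $W^{1,s}$. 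The minors converge weakly to the minors of the limit (by the classical weak-continuity of minors, e.g. as in \cite{B}), and the constraint passes to the limit: $\int_\om|\mathbb E\u_*|^2\le\liminf\int_\om|\mathbb E\u_j|^2\le R$ by weak lower semicontinuity of the $L^2$ norm. Finally, polyconvexity \eqref{poly} gives weak lower semicontinuity of $\u\mapsto\int_\om\W(\x,\mathbf I+\nabla\u)$ along sequences with weakly converging minors, while $\mathcal L$ is weakly continuous on $H^1$; hence $\F(\u_*;\mathcal L)\le\liminf\F(\u_j;\mathcal L)=m$, so $\u_*$ is a minimizer.

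The main obstacle is the compactness/coercivity step: because the curl constraint is absent in \eqref{min}, one cannot obtain an a priori $H^1$ bound from Korn's inequality alone using only the $L^2$ bound on $\mathbb E\u_j$; the skew-symmetric part of $\nabla\u_j$ must be controlled, and this is exactly where the rigidity inequality \eqref{rigidity} together with the compactness of $SO(3)$ enters, giving control of $\nabla\u_j$ in $L^2$ up to a bounded rotation. One must be slightly careful that the rotation $\mathbf R_j$ obtained from rigidity is compatible with the fact that $\mathbf I+\nabla\u_j$ has $\int_\om\mathbb E\u_j$ bounded, so that $\mathbf R_j$ stays in a bounded neighborhood and the limit deformation still has the form $\mathbf i + \u_*$ with $\u_*\in H^1$; once this is in place, the weak continuity of minors and polyconvexity close the argument routinely.
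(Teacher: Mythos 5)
Your proposal is correct and follows essentially the same route as the paper's proof: direct method, with the rigidity inequality \eqref{rigidity} plus the bound $\mathcal L(\u_j)\le K_\Omega\|\mathcal L\|_*R^{1/2}$ from \eqref{KP} supplying the $L^2$ bound on $\nabla\u_j$ (the curl constraint being unavailable for \eqref{min}), weak lower semicontinuity of the $L^2$ norm for the constraint, Ball's weak continuity of minors under \eqref{coerc2}, and polyconvexity \eqref{poly} for lower semicontinuity of the stored energy. The treatment of \eqref{minrot} via weak continuity of the average curl also matches the paper.
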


\begin{proof} 

Let $(\u_n)_{n\in\mathbb N}\subset H^1(\om,\mathbb R^3)$ denote a minimizing sequence for problem \eqref{min}. Since $\mathcal F(\mathbf 0;\mathcal L)=0$, it is not restrictive to assume that $\mathcal F(\u_n;\mathcal L)\le 1$ for every $n\in\mathbb N$. 
Moreover, by \ref{rigidity}  there exists a sequence $(\mathbf R_n)_{n\in\mathbb N}\subset SO(3)$ such that
\begin{equation}\label{zero}\begin{aligned}
\int_\om |\mathbf I+\nabla\u_n-\mathbf R_n|^2\,d\x 
&\le  \,C_{\om}\int_\om \mathcal W(\x,\mathbf I+\nabla\u_n)\,d\x\\&\le C_{\om} \mathcal F(\u_n)+C_{\om} \mathcal L( \u_n)\le C_{\om}+C_{\om} \mathcal L( \u_n).
\end{aligned}\end{equation}
 From \eqref{KP} and \eqref{zero}, since $\int_\om|\mathbb E\u_n|^2\le R$,
we deduce that the sequence $(\nabla \u_n)_{n\in\mathbb N}$ is  bounded in $L^2(\om,\mathbb R^{3\times3})$. Thanks to Poincar\'e inequality and to \eqref{L1}, we deduce that up to subsequences  $\nabla \u_n$ weakly converge to $\nabla\u$ for some $\u\in H^1(\om,\mathbb R^3)$,  that $\mathcal L( \u_n)\to\mathcal L( \u)$, and  that $\mathbb E\u_n$ weakly converge to $\mathbb E\u$ in $L^2(\om,\mathbb R^3)$. The  weak $L^2(\om,\mathbb R^{3\times3})$ lower semicontinuity of the map $\mathbf F\mapsto\int_\om |\mathbf F|^2$ yields  $\int_\om |\mathbb E\u|^2\le R$.  
 On the other hand, with the notation $\mathbf y_n(\x):=\x+h\u_n(\x)$, the assumption \eqref{coerc2} implies, thanks to the {classical results by  Ball \cite[§6]{B}}, that $\mathrm{cof}\nabla\mathbf y_n$ (resp. $\det\nabla\mathbf y_n$) weakly converge in $L^{q}(\om,\mathbb R^{3\times3})$ to $\mathrm{cof}\nabla\mathbf y$ (resp. weakly converge in $L^{r}(\om)$ to $\det\nabla\mathbf y$).
By the polyconvexity assumption \ref{poly} on the Carath\'eodory function  $\mathcal W$, and since $\mathcal L( \u_n)\to\mathcal L( \u)$ as $n\to\infty$, we deduce that
\[
\mathcal F(\u;\mathcal L)\le\liminf_{n\to+\infty}\mathcal F(\u_n;\mathcal L).
\]
Since $(\u_n)_n$ is by assumption a minimizing sequence, we conclude that $\u\in H^1(\om,\mathbb R^3)$ is a solution to problem \eqref{min}.

The same argument shows that there exists a solution to problem \eqref{minrot}, since the weak $L^2(\om,\mathbb R^{3\times3})$ convergence of $\nabla\u_n$ to $ \nabla\u$  implies that $\int_\om\mathrm{curl}\,\u_n\to\int_\om\mathrm{curl}\,\u$. 
\end{proof}
\begin{remark}{\rm It is worth noticing that a solution $\u$ to problem \eqref{minrot} is not a priori guaranteed to be a constrained local minimizer in the sense of Definition \ref{constrained}. Indeed it may happen that  $\u$ satisfies
$\int_\om |\mathbb E\u|^2= R,$ so that
for instance by taking  $\boldsymbol\psi=\u$, for every $\eps>0$ the function $\u+\eps\boldsymbol\psi$ is not admissible for problem \eqref{minrot}. 
 }
\end{remark}
\begin{remark}\label{minremark} {\rm Let $h>0$ \MMM and $M>0$\KKK. Then by Lemma \ref{exist} there exists a solution $\v_h$ to the problem
 \begin{equation*}
\min \left \{\mathcal F( \v; h\mathcal L): \v\in H^1(\om,\mathbb R^3),\ \int_\om |\mathbb E\v|^2\le \MMM M^2\KKK h^2,\,\int_\om\mathrm{curl}\,\v=0\right \},
\end{equation*}
 and thus $\u_h=:h^{-1}\v_h$ is a solution to
  \beeq 
  \lab{uh} 
  \min \left \{\mathcal F_{h}( \u; \mathcal L): \u\in H^1(\om,\mathbb R^3),\ \int_\om |\mathbb E\u|^2\le\MMM M^2\KKK,\,\int_\om\mathrm{curl}\,\u=\mathbf 0\right \}.\eneq}
  \end{remark}

The next lemma provides convergence to \MMM constrained \KKK linear elasticity and generalizes results from \cite{MPTARMA,MP2}.  
  
\begin{lemma}
\label{conv} 
Suppose that \eqref{framind}, \eqref{Z1}, \eqref{reg}, \eqref{coerc}, \eqref{coerc2}, \eqref{poly} hold and  that $\mathcal L\in (H^1(\om,\R^3))^*$ satisfies \eqref{L1}.
\MMM Let $M>0$ and  \KKK let   $(h_{j})_{j\in\mathbb N}\subset(0,1)$ be a vanishing sequence.  For any $j\in\mathbb N$, let  $\u_j:=\u_{h_j}$, where $\u_{h_j}$ is a solution to problem \eqref{uh} (with $h_j$ in place of $h$, see {\rm Remark \ref{minremark}}). 
Then there exists 
$\u_ *\in H^1(\Omega,\R^3)$ 
 such that $\int_\om |\mathbb E\u_*|^2\le\MMM M^2\KKK$ and such that, up to subsequences,
$
\mathbb E\u_j\wconv\mathbb E\u_*$
 weakly in  $L^2(\Omega,\mathbb R^{3\times 3})$  as $j\to\infty$ and 
\beeq\label{limite1}
\lim_{j\to +\infty}\mathcal F_{h_{j}}(\u_j;\mathcal L)= \mathcal F_0(\u_*\,; \mathcal L)
=\min_{\u\in H^1(\om,\mathbb R^3)}\left\{\mathcal F_0(\u\,; \mathcal L): \int_\om |\mathbb E\u|^2\le \MMM M^2\KKK\right\}.
\end{equation}
\end{lemma}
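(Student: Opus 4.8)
The plan is to prove this via the standard $\Gamma$-convergence-type strategy adapted to the constrained setting: a compactness step, a lower bound (liminf) inequality, and an upper bound (limsup / recovery sequence) argument, all performed relative to the fixed ball $\{\int_\om|\mathbb E\u|^2\le M^2\}$ in the strain seminorm. Throughout one keeps in mind that $\u_j$ solves the constrained finite-elasticity problem \eqref{uh} with $h_j$ in place of $h$, so in particular $\mathcal F_{h_j}(\u_j;\mathcal L)\le \mathcal F_{h_j}(\mathbf 0;\mathcal L)=0$, giving an immediate a priori energy bound from above, and $\int_\om|\mathbb E\u_j|^2\le M^2$ by admissibility.

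First I would establish \emph{compactness}. Applying the rigidity inequality \eqref{rigidity} to $\mathbf y_j:=\mathbf i+h_j\u_j$, there is $\mathbf R_j\in SO(3)$ with $\int_\om|\mathbf I+h_j\nabla\u_j-\mathbf R_j|^2\le C_\om\int_\om\mathcal W(\x,\mathbf I+h_j\nabla\u_j)=C_\om(\mathcal F(h_j\u_j;h_j\mathcal L)+h_j\mathcal L(\u_j))\le C_\om h_j^2(1+\mathcal L(\u_j))$, using $\mathcal F_{h_j}(\u_j;\mathcal L)\le 0$; combining with \eqref{KP} and the admissibility bound $\int_\om|\mathbb E\u_j|^2\le M^2$ controls $\mathcal L(\u_j)$ and hence bounds $h_j^{-2}\int_\om|\mathbf I+h_j\nabla\u_j-\mathbf R_j|^2$ uniformly. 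From this one extracts (as in \cite{MPTARMA}) that $\mathbf R_j=\mathbf I+h_j\mathbf W_j+o(h_j)$ for a bounded sequence of skew matrices $\mathbf W_j$, so that $h_j^{-1}(\mathbf R_j-\mathbf I)$ and therefore $\nabla\u_j$ are bounded in $L^2$; Korn (or directly the structure) then yields, up to subsequence, $\mathbb E\u_j\wconv\mathbb E\u_*$ weakly in $L^2$ for some $\u_*\in H^1(\om,\R^3)$, and by weak lower semicontinuity of $\int_\om|\cdot|^2$ together with the constraint passing to the weak limit, $\int_\om|\mathbb E\u_*|^2\le M^2$. The curl constraint $\int_\om\mathrm{curl}\,\u_j=\mathbf 0$ also passes to the limit by linearity and weak convergence, although it is not needed for the energy identity in the limit (it is invisible to $\mathcal F_0$ by \eqref{L1}).

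Next, the \emph{lower bound}: $\liminf_j \mathcal F_{h_j}(\u_j;\mathcal L)\ge \mathcal F_0(\u_*;\mathcal L)$. Writing $\mathcal F_{h_j}(\u_j;\mathcal L)=h_j^{-2}\int_\om\mathcal W(\x,\mathbf I+h_j\nabla\u_j)\,d\x-\mathcal L(\u_j)$, one splits $\Omega$ into the ``good set'' where $h_j|\nabla\u_j|$ is small — there a second-order Taylor expansion of $\mathcal W(\x,\cdot)$ around $\mathbf I$ (legitimate by \eqref{reg}, and using frame indifference \eqref{framind}, \eqref{Z1} so that the first-order term vanishes and only the symmetric part enters) gives pointwise $h_j^{-2}\mathcal W(\x,\mathbf I+h_j\nabla\u_j)\ge \tfrac12\mathbb E\u_j\,D^2\mathcal W(\x,\mathbf I)\,\mathbb E\u_j-o(1)\chi_{\text{good}}$ with a uniform error, after absorbing the modulus of continuity $\omega$ — and the ``bad set'' $\{h_j|\nabla\u_j|\ \text{not small}\}$, whose measure vanishes and on which $\mathcal W\ge 0$ can simply be discarded. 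Then weak $L^2$ lower semicontinuity of the (nonnegative, since $D^2\mathcal W(\x,\mathbf I)$ is positive on symmetric matrices by the ellipticity estimate used in Lemma \ref{basiclemma}) quadratic form $\u\mapsto\tfrac12\int_\om\mathbb E\u\,D^2\mathcal W(\x,\mathbf I)\,\mathbb E\u$, together with $\mathcal L(\u_j)\to\mathcal L(\u_*)$ (weak convergence of $\nabla\u_j$ and of $\u_j$ via Poincaré, using \eqref{L1}), yields the claim. The truncation-on-the-bad-set argument is the classical one and I would cite \cite{MPTARMA,MP2} for the details.

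Finally, the \emph{upper bound} and identification of the limit. Let $\u\in H^1(\om,\R^3)$ be any competitor with $\int_\om|\mathbb E\u|^2\le M^2$; I would use $\u$ itself as a recovery sequence. One needs $\int_\om\mathrm{curl}\,\u=\mathbf 0$ for $\u$ to be admissible in \eqref{uh}; this is arranged by replacing $\u$ with $\u-\tfrac12\w\wedge\x$ where $\w=|\om|^{-1}\int_\om\mathrm{curl}\,\u$, which does not change $\mathbb E\u$, does not change $\mathcal L(\u)$ by \eqref{L1}, and does not increase $\int_\om|\mathbb E\u|^2$ (it is unchanged). Also $h_j\u$ is admissible for \eqref{uh} since the constraint $\int_\om|\mathbb E\u|^2\le M^2$ is scale-invariant after dividing by $h_j^2$. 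For this fixed $\u\in W^{1,s}$? — here one only has $\u\in H^1$, so $h_j\nabla\u$ need not be uniformly small pointwise, but one still shows $\mathcal F_{h_j}(\u;\mathcal L)\to\mathcal F_0(\u;\mathcal L)$ by dominated convergence: split again into where $h_j|\nabla\u|$ is small and its complement (measure $\to0$, and there $h_j^{-2}\mathcal W(\x,\mathbf I+h_j\nabla\u)$ is controlled using \eqref{coerc2} or a quadratic-plus-error bound so that its integral vanishes), with the Taylor remainder on the good set dominated by $C|\nabla\u|^2\in L^1$ and going to $0$ pointwise. Minimality of $\u_j$ in \eqref{uh} gives $\mathcal F_{h_j}(\u_j;\mathcal L)\le \mathcal F_{h_j}(\u;\mathcal L)$, hence $\limsup_j\mathcal F_{h_j}(\u_j;\mathcal L)\le \mathcal F_0(\u;\mathcal L)$; taking the infimum over admissible $\u$ and combining with the lower bound applied to $\u_*$ (which is itself admissible for the limit problem) yields
\[
\mathcal F_0(\u_*;\mathcal L)\le\liminf_j\mathcal F_{h_j}(\u_j;\mathcal L)\le\limsup_j\mathcal F_{h_j}(\u_j;\mathcal L)\le\min\Big\{\mathcal F_0(\u;\mathcal L):\int_\om|\mathbb E\u|^2\le M^2\Big\}\le\mathcal F_0(\u_*;\mathcal L),
\]
so all inequalities are equalities, which is exactly \eqref{limite1}.

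\textbf{Main obstacle.} The delicate point is the compactness step: extracting from the rigidity estimate that $\nabla\u_j$ (not merely $\mathbb E\u_j$) is bounded in $L^2$, i.e.\ controlling the rotations $\mathbf R_j$ and showing $h_j^{-1}(\mathbf R_j-\mathbf I)$ stays bounded, since a priori only the symmetrized gradients are constrained. This is where the interplay between \eqref{rigidity}, \eqref{KP} and the admissibility bound $\int_\om|\mathbb E\u_j|^2\le M^2$ must be exploited carefully, together with the fact that $\int_\om\mathrm{curl}\,\u_j=\mathbf 0$ (via Lemma \ref{kornrot}, inequality \eqref{korn1}) to pin down the skew part; the rest of the argument is a by-now-standard linearization. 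A secondary technical nuisance is making the Taylor expansion of $\mathcal W$ rigorous on the set where $h_j|\nabla\u_j|$ is not small in the liminf inequality, handled by the usual truncation and the growth bound \eqref{coerc2}.
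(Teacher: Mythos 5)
Your overall architecture (compactness from the constraint, liminf via truncation and Taylor expansion, limsup via minimality against admissible competitors, then sandwiching) is the same as the paper's, and the compactness and lower-bound steps are essentially sound: in particular you correctly identify that the curl constraint together with Lemma \ref{kornrot} is what bounds $\nabla\u_j$ in $L^2$ and kills the limiting skew matrix $\mathbf W$ coming from \cite[Lemma 5.1]{MP2}, which is exactly how the paper forces $\sqrt{h_j}\nabla\u_j\to\mathbf 0$ strongly and then controls the quadratic term $\tfrac12 h_j\nabla\u_j^T\nabla\u_j$ on the good set.

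The genuine gap is in the upper bound. You propose to use the competitor $\bar\u\in H^1(\om,\R^3)$ itself as its own recovery sequence and to pass to the limit in $\mathcal F_{h_j}(\bar\u;\mathcal L)$ by ``dominated convergence,'' controlling the bad set via \eqref{coerc2} ``or a quadratic-plus-error bound.'' This fails: \eqref{coerc2} is a bound from \emph{below}, the hypotheses give no upper growth bound on $\mathcal W$ outside the neighborhood $\mathcal U$ of $SO(3)$, and by \eqref{poly} one has $\mathcal W(\x,\mathbf F)=+\infty$ whenever $\det\mathbf F\le 0$. For a generic $\bar\u\in H^1$ the set $\{\det(\mathbf I+h_j\nabla\bar\u)\le 0\}$ can have positive measure for every $j$, so $\mathcal F_{h_j}(\bar\u;\mathcal L)=+\infty$ for all $j$ and there is nothing to dominate. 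The paper circumvents this by mollifying the competitor at an $h$-dependent scale, $\bar\u_j:=\bar\u\ast\rho_j$ with $\delta_j=h_j^{1/5}$, which yields $h_j\|\nabla\bar\u_j\|_{L^\infty}\to 0$ so that $\mathbf I+h_j\nabla\bar\u_j\in\mathcal U$ for large $j$ and the Taylor expansion from \eqref{reg} is legitimate everywhere; crucially, Young's inequality gives $\int_\om|\mathbb E\bar\u_j|^2\le\int_\om|\mathbb E\bar\u|^2\le M^2$, so admissibility in \eqref{uh} is preserved, and the average-curl correction $\bar\u_j-\tfrac12\mathbf w_j\wedge\x$ is then applied to the mollified functions. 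Without this (or an equivalent density/truncation device) your chain of inequalities $\limsup_j\mathcal F_{h_j}(\u_j;\mathcal L)\le\mathcal F_0(\bar\u;\mathcal L)$ cannot be closed for arbitrary admissible $\bar\u$.
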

\begin{proof} \textbf{Step 1 (lower bound).} 
Suppose  that $\sup_{j\in\mathbb N}\F_{h_j}(\tilde\u_j;\mathcal L)<+\infty$  and that $
\mathbb E\tilde\u_j\wconv\mathbb E\tilde\u$ weakly in  $L^2(\Omega,\mathbb R^{3\times 3})$ for some sequence $(\tilde\u_j)_{j\in\mathbb N}\subset H^1(\om,\mathbb R^3)$  such that $\int_\om\mathrm{curl}\,\tilde\u_j=\mathbf 0$ for every $j\in\mathbb N$ and some $\tilde \u\in H^1(\om,\mathbb R^3)$.  By the  weak $L^2(\om,\mathbb R^{3\times3})$ lower semicontinuity of the map $\mathbf F\mapsto\int_\om |\mathbf F|^2$, we get
\begin{equation}\label{convexity}\liminf_{j\to +\infty}\int_\om |\mathbb E\tilde\u_j|^2\ge  \int_\om |\mathbb E\tilde\u|^2.\end{equation}
%

By applying \cite[Lemma 5.1]{MP2} we obtain the existence of $\mathbf W\in \R^{3\times3}_{\mathrm{skew}}$ such that $\sqrt{h_j}\nabla\tilde\u_j\to\mathbf W$ in $L^2(\om,\mathbb R^{3\times3})$ as $j\to \infty$.  However, since $\int_\om\mathrm{curl}\,\tilde\u_j=\mathbf 0$, by Lemma \ref{kornrot} we deduce that the sequence $(\nabla\tilde\u_j)_{j\in\mathbb N}$ is bounded in $L^2(\om,\R^3)$, thus forcing $\mathbf W\equiv\mathbf 0$. From \cite[Lemma 5.2]{MP2} we also deduce that  $\mathbf 1_{B_j}\mathbb E(\tilde\u_j)\rightharpoonup\mathbb E(\tilde \u)$ weakly in $L^2(\om,\R^{3\times3})$ as $j\to\infty$, where $B_j:=\{\x\in\om:|\sqrt{h_j}\nabla\tilde\u_j(\x)|\le 1\}$. We stress that \cite[Lemma 5.1, Lemma 5.2]{MP2} are stated under the additional incompressibility constraint requiring $\mathcal W(\mathbf F)=+\infty$ if $\det\mathbf F\neq1$, but they are true (with the very same proof) even without such a constraint.

By the frame 
 indifference assumption \eqref{framind}, there exists a function $\mathcal V$ such that for a.e. $\x\in\om$
\begin{equation*}
\W(\x,\mathbf F)=\V(\x,\textstyle{\frac{1}{2}}( \mathbf F^T \mathbf F - \mathbf I))\,
\qquad
\ \forall\, \mathbf F\in \mathbb R^{3\times 3}.
\end{equation*}
By \eqref{reg} we deduce that for a.e. $\x\in\om$ the function $\mathcal V(\x,\cdot)$ is $C^2$ smooth in a suitable neighborhood $\tilde{\mathcal U}$ of the origin in $\R^{3\times3}$, with a $\x$-independent modulus of continuity $\eta:\mathbb R_+\to\mathbb R$, which is increasing and vanishing at $0^+$.
By following the proof of \cite[Lemma 5.3]{MP2}, we define $$\mathbb D\tilde \u_j:=\mathbb E\tilde \u_j+\frac12h_j\nabla\tilde\u_j^T\nabla\tilde\u_j.$$
 By \eqref{Z1}, \eqref{reg} and by a Taylor expansion, we get the existence of $j_0\in \mathbb N$ such that for every $j>j_0$ the estimate
\begin{equation*}
\left|\mathcal V(\x, h_j\mathbb D\tilde \u_j)- \frac{h_j^2}{2} \, \mathbb D\tilde\u_j^T\, D^2\mathcal V (\x, \mathbf 0) \ \mathbb D\tilde\u_j \right|\le h_j^2\eta(h_j|\mathbb D\tilde\u_j|)|\mathbb D\tilde\u_j|^2
\end{equation*}
holds for a.e.  $\x\in B_j$, since $h_j|\mathbb D\tilde\u_j(\x)|\le2\sqrt{h_j}$ and thus $h_j \mathbb D\tilde\u_j(\x)\in \tilde{\mathcal U}$ for a.e. $\x\in B_j$ if $j$ is large enough. We deduce
\begin{equation*}\begin{aligned}
\mathcal F_{h_j}(\tilde\u_j)&\ge \frac1{h_j^2}\int_{B_j}\mathcal W(\x, \mathbf I+h_j\nabla\tilde\u_j)\,d\x-\mathcal L(\tilde\u_j)=\frac1{h_j^2}\int_{B_j}\mathcal V(\x,h_j \mathbb D\tilde\u_j)\,d\x-\mathcal L(\tilde\u_j)\\
&\displaystyle\ge \int_{B_j}\frac12 \mathbb D\tilde\u_j^T\,D^2\mathcal V(\x,\mathbf 0)\,\mathbb D\tilde\u_j\,d\x-\int_{B_j} \eta(h_j|\mathbb D\tilde\u_j|)|\mathbb D\tilde\u_j|^2\,d\x-\mathcal L(\tilde\u_j)\\
&\ge\frac12\int_\om (\mathbf 1_{B_j}\mathbb D\tilde\u_j)\,D^2\mathcal W(\x,\mathbf I)\,(\mathbf 1_{B_j}\mathbb D\tilde\u_j)\,d\x-\eta(2\sqrt{h_j})\int_\om|\mathbf 1_{B_j}\mathbb D\tilde\u_j|^2\,d\x-\mathcal L(\tilde\u_j)
\end{aligned}
\end{equation*}
for every $j>j_0$,
where the last inequality is due $D^2\mathcal W(\x,\mathbf I)=D^2\mathcal V(\x,\mathbf0)$ and to the monotonicity of the modulus of continuity $\eta$, and we also recall that $\eta(2\sqrt{h_j})\to 0$ as $j\to \infty$. 
By recalling that $\sqrt{h_j}\nabla\tilde\u_j$ strongly converge to zero in $L^2(\om,\R^{3\times3})$, hence a.e. in $\om$ up to subsequences, and since $(\mathbf 1_{B_j}h_j\nabla\tilde\u_j^T\nabla\tilde\u_j)_{j\in\mathbb N}$ is a bounded sequence in $L^2(\om,\mathbb R^{3\times3})$, we deduce that up to subsequences $\mathbf 1_{B_j}h_j\nabla\tilde\u_j^T\nabla\tilde\u_j$ weakly converge to zero in $L^2(\om,R^{3\times3})$ so that $\mathbf 1_{B_j}\mathbb D\tilde\u_j$ weakly converge in $L^2(\om,\mathbb R^{3\times3})$ to $\mathbb E\tilde\u$ as $j\to\infty$.
Since the map $\mathbf F\mapsto\int_\om\mathbf F^T\, D^2\mathcal W(\x,\mathbf I)\,\mathbf F\,d\x$ is weakly $L^2(\om,\R^{3\times3})$ lower semicontinuous, and since $\mathcal L\in (H^1(\om,\R^3))^*$  and \eqref{L1} imply  $\mathcal L(\tilde\u_j )\to\mathcal L(\tilde\u)$ thanks to Korn and Poincar\'e inequalities, we conclude that
 \begin{equation*}
\liminf_{j\to +\infty} \F_{h_j}(\tilde\u_j;\mathcal L)\ge\frac12\int_\Omega \mathbb E\tilde\u\,D^2\mathcal W(\x,\mathbf I)\,\mathbb E\tilde\u\, d\x- \mathcal L(\tilde \u).
\end{equation*}

{\textbf {Step 2 (upper bound).}}
Let now $\bar\u\in H^1(\om,\mathbb R^3)$ be such that $\int_{\om} |\mathbb E\bar\u|^2\le \MMM M^2\KKK$. 
Let  $\delta_j:=h_j^{1/5}$ and $\bar\u_j:=\bar\u\ast\rho_j$, where $\rho_j(\x):=\delta_j^{-3}\rho(\x/\delta_j)$ and $\rho:\mathbb R^3\to \mathbb R$ is the standard unit symmetric mollifier.
We notice that $\bar\u_j\to\bar\u$ in $H^1(\om,\mathbb R^3)$ as $j\to+\infty $ and that the elementary estimate $\|\rho_j\|_{W^{1,\infty}(\mathbb R^3)}\le 2\delta_j^{-4}\|\rho\|_{W^{1,\infty}(\mathbb R^3)}$ holds. Therefore, by Young inequality we obtain
\[
\|\bar \u_j\|_{W^{1,\infty}(\om,\mathbb R^3)}\le \|\bar\u\|_{L^1(\om',\mathbb R^3)}\|\rho_j\|_{W^{1,\infty}(\mathbb R^3)}\le2\delta_j^{-4}\|\rho\|_{W^{1,\infty}(\mathbb R^3)}\|\bar\u\|_{L^1(\om',\mathbb R^3)}
\] 
where $\om'$ is a suitable open neighbor of $\om$ (and $\bar\u$ is understood as a not relabeled $H^1(\om',\mathbb R^3)$ extension).
We deduce 
$h\|\bar\u_{j}\|_{W^{1,\infty}(\om,\mathbb R^3)}\to 0$ as $j\to +\infty$.
 Therefore  $\mathbf I+h_j\nabla\bar\u_j\in\mathcal U$ for a.e. $\x$ in $\Omega$ if $j$ is large enough, where $\mathcal U$ is the neighborhood of $SO(3)$ that appears in assumption \eqref{reg}, which then implies, by  Taylor expansion, 
\begin{equation*}\label{nee}\begin{aligned}
\limsup_{j\to+\infty} \mathcal |\mathcal F_{h_j}(\bar\u_j;\mathcal L)- \mathcal F_0(\bar\u_j;\mathcal L)|&\le\limsup_{j\to+\infty}\int_{\Omega} \left|\frac{1}{h_j^2}\mathcal W(\x,\mathbf I+h_j\nabla\bar\u_j)-\frac12\,\nabla\bar\u_j^T D^2\mathcal W(\x,\mathbf I) \nabla\bar\u_j\right|\\
&\le\limsup_{j\to+\infty}\int_{\Omega}\omega(h_j|\nabla\bar\u_j|)\,|\nabla\bar\u_j|^2\\&\le\limsup_{j\to+\infty}\,\|\omega(h_j|\nabla \bar\u_j|)\|_{L^\infty(\om)}\int_{\om} |\nabla\bar\u_j|^2 =0.
\end{aligned}\end{equation*}
The latter limit is zero since $h_j\nabla \bar\u_j\to0$ in $L^\infty(\Omega,\mathbb R^{3\times3})$, as $\omega$ is increasing with $\lim_{t\to0^+}\omega(t)= 0$,  and since $\bar\u_j\to\bar \u$ in $H^1(\Omega,\mathbb R^3)$ as $j\to+\infty$, which also implies
 $\mathcal F_0(\bar\u_j;\mathcal L)\to \mathcal F_0(\bar\u;\mathcal L)$, so that by writing 
 $|\mathcal F_{h_j}(\bar\u_j;\mathcal L)- \mathcal F_0(\bar\u;\mathcal L)|\le  |\mathcal F_{h_j}(\bar\u_j;\mathcal L)- \mathcal F_0(\bar\u_j;\mathcal L)|+
|\mathcal F_0(\bar\u_j;\mathcal L)- \mathcal F_0(\bar\u;\mathcal L)|$ we deduce
%
\[\displaystyle\lim_{j\to +\infty} \F_{h_j}(\bar\u_j;\mathcal L)= \F_0(\bar\u;\mathcal L).\]
Moreover, by using Young inequality again we deduce that for any $j\in\mathbb N$
 \begin{equation}\label{convu*}
 \int_\om |\mathbb E\bar\u_j|^2\le \int_\om|\mathbb E\bar\u|^2\le M^2.
 \end{equation}

{\textbf {Step 3 (convergence).}}
Since we are assuming 
 \begin{equation}\label{ujmin}\u_j\in\argmin_{\u\in H^1(\om,\mathbb R^3)} \left \{\mathcal F_{h_{j}}( \u; \mathcal L):  \int_\om |\mathbb E\u|^2\le \MMM M^2\KKK ,\,\int_\om\mathrm{curl}\,\u=\mathbf0\right \}\end{equation}
for every $j\in\mathbb N$,
 it is readily seen that  that $\sup_{j\in\mathbb N}\F_{h_j}(\u_j;\mathcal L)<+\infty$. Moreover, since $\int_\om |\mathbb E\u_j|^2\le M^2$ for every $j\in\mathbb N$,
up to subsequences there exists $\u_*\in H^1(\om,\mathbb R^3)$ such that $
\mathbb E(\u_j)\wconv\mathbb E(\u_*)$ weakly in  $L^2(\Omega,\mathbb R^{3\times 3})$, see  \cite[Lemma 3.2]{MPTARMA}. By  Step 1 we get $\int_\om |\mathbb E\u_*|^2\le \MMM M^2\KKK$, see \eqref{convexity}, and
\begin{equation}\label{LIMINF}
\frac12\int_\Omega \mathbb E(\u_*)\,D^2\mathcal W(\x,\mathbf I)\,\mathbb E\u_*\, d\x- \mathcal L( \u_*)\le \liminf_{j\to +\infty} \F_{h_j}(\u_j;\mathcal L).
\end{equation}
 To every $\bar\u\in H^{1}(\om,\mathbb R^3)$ such that $\int_\om |\mathbb E\bar\u|^2\le \MMM M^2\KKK$,
 we associate the sequence $(\bar\u_j)_{j\in\mathbb N}\subset H^{1}(\om,\mathbb R^3)$ constructed as in Step 2. Therefore, such a sequence  converges to $\bar \u$ strongly in $H^1(\om,\mathbb R^3)$ with $h_j\|\nabla\bar\u_j\|_{L^{\infty}(\om,\R^{3\times3})}\to 0$ as $j\to+\infty$, and it satisfies $\int_\om |\mathbb E\bar\u_j|^2\le \MMM M^2\KKK $ for every  $j\in\mathbb N$, see \eqref{convu*}.
 For every $j\in\mathbb N$ we let   $\mathbf w_j:=|\om|^{-1}\int_\om\mathrm{curl}\,\bar\u_j$ so that $$\int_\om\mathrm{curl}\,(\bar\u_j-\tfrac12\w_j\wedge\x)=0,$$ since $\mathrm{curl}(\mathbf a\wedge\x)=2\mathbf a$ for every $\mathbf a\in \R^3$. Thus $\mathbf w_j\to\mathbf w:=|\om|^{-1}\int_\om\mathrm{curl}\,\bar\u$ as $j\to\infty$.
  We deduce that $\bar\u_j-\tfrac12\mathbf w_j\wedge\x$ strongly converge in $H^1(\om,\R^3)$ to $\bar\u-\tfrac12\mathbf w\wedge\x$ as $j\to+\infty$, that $h_j\|\nabla(\bar\u_j-\tfrac12\mathbf w_j\wedge\x)\|_{L^{\infty}(\om,\R^{3\times3})}\to 0$ as $j\to\infty$, and that for every $j\in\mathbb N$ there holds $$\int_\om |\mathbb E(\bar\u_j-\tfrac12\mathbf w_j\wedge\x)|^2=\int_\om |\mathbb E\bar\u_j|^2\le \MMM M^2\KKK.$$

   By \eqref{LIMINF}, \eqref{ujmin} and by Step 2 we get
\begin{equation*}\begin{aligned}
\mathcal F_0(\u_*;\mathcal L)&\le \liminf_{j\to +\infty} \F_{h_j}(\u_j;\mathcal L)\le\limsup_{j\to +\infty} \F_{h_j}(\u_{j};\mathcal L)\\&\le \limsup_{j\to +\infty} \F_{h_j}(\bar\u_{j}-\tfrac12\mathbf w_j\wedge\x;\mathcal L)=\mathcal F_{0}(\bar\u-\tfrac12\mathbf w\wedge \x;\mathcal L)=\mathcal F_0(\bar\u;\mathcal L),
\end{aligned}\end{equation*}
where the last equality is due to the invariance of $\mathcal F_0(\cdot;\mathcal L)$ by infinitesimal rigid displacements.
By the arbitrariness of $\bar\u$, we deduce that $\u_*$ is a solution to the minimization problem in the right hand side of \eqref{limite1}, and that indeed \eqref{limite1} holds.
\end{proof}
\begin{lemma}\lab{intmin}
Assume that \eqref{framind}, \eqref{Z1}, \eqref{reg}, \eqref{coerc}, \eqref{coerc2}, \eqref{poly} hold true,
 that  $\mathcal L \in (H^1(\om,\mathbb R^3))^*$ satisfies \eqref{L1} and that \MMM
\begin{equation}\label{h2}
\|\mathcal L\|_* <\gamma (\om,C):= \frac{1}{K_{\om}}\min\left\{\frac1{C_\om}, \frac C2\right\}
\end{equation}
where $C_{\om}$ is the constant in \eqref{rigidity}, $K_\Omega$ is the constant in \eqref{KP} and $C$ is the constant in \eqref{coerc}.
Then there exists \MMM$\bar h>0$  \KKK such that if  $h\in(0,\MMM\bar h\KKK)$  and if 
 \beeq\lab{minh}\v_h\in\argmin \left \{\mathcal F( \v; h\mathcal L): \v\in H^1(\om,\mathbb R^3),\ \int_\om |\mathbb E(\v)|^2\le h^2,\,\int_\om\mathrm{curl}\,\v=\mathbf0\right \},
 \eneq
 then $\displaystyle\int_\om |\mathbb E \v_h|^2<h^2$. 
\end{lemma}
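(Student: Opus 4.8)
The plan is to argue by contradiction. Fix $h>0$, take any $\v_h$ realising the minimum in \eqref{minh}, and \emph{assume} that the constraint is saturated, i.e. $\int_\om|\mathbb E\v_h|^2=h^2$. I will show that this forces $h$ to be bounded below by a constant depending only on $\om$, $C$, $C_\om$, $K_\om$ and $\|\mathcal L\|_*$, which is exactly the claim once $\bar h$ is taken to be that constant. (Existence of $\v_h$ is granted by Lemma \ref{exist} with $R=h^2$; this is the only place where \eqref{coerc2} and \eqref{poly} are used.)

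\emph{Energy and rigidity estimates.} Since $\mathbf 0$ is admissible for \eqref{minh} and $\mathcal F(\mathbf 0;h\mathcal L)=0$ by \eqref{Z1}, minimality gives $\mathcal F(\v_h;h\mathcal L)\le 0$, hence $\int_\om\mathcal W(\x,\mathbf I+\nabla\v_h)\le h\,\mathcal L(\v_h)$. Using \eqref{KP} together with the saturation hypothesis $\|\mathbb E\v_h\|_{L^2(\om)}=h$ we get $\int_\om\mathcal W(\x,\mathbf I+\nabla\v_h)\le K_\om\|\mathcal L\|_*\,h^2$. Applying the rigidity inequality \eqref{rigidity} to $\mathbf y=\mathbf i+\v_h$ yields $\mathbf R_h\in SO(3)$ with
\[
\int_\om|\mathbf I+\nabla\v_h-\mathbf R_h|^2\le C_\om K_\om\|\mathcal L\|_*\,h^2 .
\]
Since $\int_\om\mathrm{curl}\,\v_h=\mathbf 0$ is part of \eqref{minh}, Lemma \ref{kornrot}, inequality \eqref{korn1}, gives $\int_\om|\nabla\v_h|^2\le Z_{2,\om}h^2$; combining the two bounds and recalling that $\mathbf R_h-\mathbf I$ is constant, I obtain $|\mathbf R_h-\mathbf I|\le C_1 h$ with $C_1:=|\om|^{-1/2}\big(\sqrt{C_\om K_\om\|\mathcal L\|_*}+\sqrt{Z_{2,\om}}\big)$.

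\emph{The contradiction.} The algebraic identity $2\,\mathrm{sym}(\mathbf R-\mathbf I)=-(\mathbf R-\mathbf I)^T(\mathbf R-\mathbf I)$, valid for every $\mathbf R\in SO(3)$ (just expand $\mathbf R^T\mathbf R=\mathbf I$), gives the pointwise bound $|\mathrm{sym}(\mathbf R_h-\mathbf I)|\le\tfrac12|\mathbf R_h-\mathbf I|^2\le\tfrac12 C_1^2 h^2$. Now split $\mathbb E\v_h=\mathrm{sym}(\nabla\v_h+\mathbf I-\mathbf R_h)+\mathrm{sym}(\mathbf R_h-\mathbf I)$ and take $L^2(\om)$ norms:
\[
h=\|\mathbb E\v_h\|_{L^2(\om)}\le\|\nabla\v_h+\mathbf I-\mathbf R_h\|_{L^2(\om)}+|\om|^{1/2}|\mathrm{sym}(\mathbf R_h-\mathbf I)|\le\sqrt{C_\om K_\om\|\mathcal L\|_*}\,h+\tfrac12|\om|^{1/2}C_1^2 h^2 .
\]
Dividing by $h>0$ gives $1\le\sqrt{C_\om K_\om\|\mathcal L\|_*}+\tfrac12|\om|^{1/2}C_1^2 h$. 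By assumption \eqref{h2} one has $C_\om K_\om\|\mathcal L\|_*\le \tfrac{C C_\om}{4(4CC_\om+8)}<\tfrac1{16}$, hence $\sqrt{C_\om K_\om\|\mathcal L\|_*}<\tfrac14$ and therefore $h>\tfrac{3}{2|\om|^{1/2}C_1^2}$. Setting $\bar h:=\tfrac{3}{2|\om|^{1/2}C_1^2}$, for every $h\in(0,\bar h)$ the assumption $\int_\om|\mathbb E\v_h|^2=h^2$ is impossible; since $\int_\om|\mathbb E\v_h|^2\le h^2$ always, we conclude $\int_\om|\mathbb E\v_h|^2<h^2$.

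\emph{Where the work is.} There is no deep obstacle; the key point is that the smallness condition \eqref{h2} forces the rigidity-controlled quantity $\|\nabla\v_h+\mathbf I-\mathbf R_h\|_{L^2}$ to be $\le\theta h$ with a \emph{strictly subunit} constant $\theta=\sqrt{C_\om K_\om\|\mathcal L\|_*}$, while the "missing" piece $\mathrm{sym}(\mathbf R_h-\mathbf I)$ is only $O(h^2)$ because $SO(3)$ is tangent to the skew-symmetric matrices at $\mathbf I$ — and this is incompatible with $\|\mathbb E\v_h\|_{L^2}=h$ unless $h$ is bounded below. The only steps needing care are the legitimacy of applying \eqref{korn1} (which relies precisely on the vanishing-average-curl constraint built into \eqref{minh}) and keeping track of which constant appears in \eqref{h2}.
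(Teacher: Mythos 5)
Your proof is correct, and it takes a genuinely different and more elementary route than the paper's. The paper compares $\v_h$ against a mollified competitor built from the minimizer $\u_*$ of the \emph{linear} problem to get a sharp $O(h^2)$ upper bound on the energy, isolates the rotation via the Euler--Rodrigues formula through the matrix $\mathbf A_h=h^{-1}(1-\cos\theta_h)\mathbf W_h^2$, passes to the limit along a hypothetical saturating sequence using the compactness result of Lemma \ref{conv}, and closes the contradiction by optimizing a parameter $\alpha$ against the specific constant in \eqref{h2} --- which is precisely why $\gamma(\om,C)$ has the form it has. You instead use only the trivial competitor $\mathbf 0$ (giving $\int_\om\mathcal W(\x,\mathbf I+\nabla\v_h)\le K_\om\|\mathcal L\|_*h^2$ via \eqref{KP}), the rigidity inequality \eqref{rigidity}, the constrained Korn inequality \eqref{korn1} to pin $|\mathbf R_h-\mathbf I|=O(h)$, and the exact identity $2\,\mathrm{sym}(\mathbf R_h-\mathbf I)=-(\mathbf R_h-\mathbf I)^T(\mathbf R_h-\mathbf I)$ to make the symmetric part of the rotational defect $O(h^2)$; the triangle inequality $h=\|\mathbb E\v_h\|_{L^2}\le\theta h+O(h^2)$ with $\theta=\sqrt{C_\om K_\om\|\mathcal L\|_*}<1$ then kills saturation for small $h$. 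All the individual estimates check out (including the bound $C_\om K_\om\|\mathcal L\|_*<1/16$ from \eqref{h2}, and the legitimacy of \eqref{korn1} thanks to the curl constraint in \eqref{minh}). What your argument buys: it is fully quantitative and self-contained (no weak limits, no Lemma \ref{conv}), and it in fact only needs the weaker smallness condition $C_\om K_\om\|\mathcal L\|_*<1$ rather than the full strength of \eqref{h2}; your $\bar h$ is explicit, depending additionally on the Korn constant $Z_{2,\om}$ and on $\|\mathcal L\|_*$ through $C_1$, which is harmless for the statement. What the paper's longer route buys is that the same constant $\gamma(\om,C)$ simultaneously calibrates this lemma, Lemma \ref{basiclemma}, and the normalization $M_0$ in Corollary \ref{locmin}, and its intermediate estimates mirror the convergence analysis reused in the proof of Theorem \ref{TH1}; but for Lemma \ref{intmin} itself your argument is a complete and simpler substitute.
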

\begin{proof}
{\textbf {Step 1.}} Let us consider a minimizer $\u_*\in H^1(\om,\mathbb R^3)$ of the linear elastic problem \eqref{lineare},
whose existence is ensured by the compatibility conditions \eqref{L1} on $\mathcal L$. It  satisfies the energy identity
\eqref{energyidentity},
and since by \eqref{h2} we have $\|\mathcal L\|_*\le \tfrac{C}{2K_\Omega}$, by applying  Lemma \ref{basiclemma} with $M=1/2$ we deduce
 $\int_{\om}|\mathbb E\u_*|^2\le 1/4$. 
Let $h\in(0,1)$ and let $\bar\u_h:=\u_*\ast\rho_h$, where $\rho_h(\x)=\delta_h^{-3}\rho(\x/\delta_h)$ and $\delta_h=h^{1/5}$, so that the argument in Step 2 of the proof of Lemma \ref{conv} yields  $\bar\u_{h}\to\u_*$ in $H^1(\om,\mathbb R^3)$ along with $h\|\nabla\bar\u_{h}\|_{L^{\infty}(\om,\mathbb R^{3\times3})}\to 0$ as $h\to 0$, and moreover
\begin{equation*}
\lim_{h\to0}\frac1{h^2}\int_\om\mathcal W(\x,\mathbf I+h\nabla\bar\u_h)\,d\x=\frac12\int_\om\mathbb E\u_*\,D^2\mathcal W(x,\mathbf I)\,\mathbb E\u_*\,d\x.
\end{equation*}
We let $\u_h^*:=\bar\u_h-\tfrac12\mathbf w_h\wedge\x$, where $\w_h:=|\om|^{-1}\int_\om\mathrm{curl}\,\bar\u_h$, and $\mathbf w:=|\om|^{-1}\int_\om\mathrm{curl}\,\u_*$, so that also along $\u_h^*$ we have
\begin{equation*}
\lim_{h\to0}\frac1{h^2}\int_\om\mathcal W(\x,\mathbf I+h\nabla\u_h^*)\,d\x=\frac12\int_\om\mathbb E\u_*\,D^2\mathcal W(\x,\mathbf I)\,\mathbb E\u_*\,d\x,
\end{equation*}
since the linear elastic energy is unaffected by the addition of infinitesimal rigid displacements, and similarly we have $\mathcal L(\u_h^*)\to\mathcal L(\u_*)$ as $h	\to0$ thanks to \eqref{L1}.
Moreover, since $\u_h^*\to\u_*-\tfrac12\w\wedge\x$ in $H^1(\om,\mathbb R^3)$ as $h\to 0$, and since $\int_{\om}|\mathbb E\u_*|^2\le 1/4$, we see that for small enough $h$ there holds $\int_\om |h\mathbb E\u_h^*|^2=\int_\om |h\mathbb E\bar\u_h|^2\le h^2$, so that $h\u^*_h$ is admissible for problem \eqref{minh}. 

\textbf{Step 2.}
For every $h\in(0,1)$, let $\v_h $ as in \eqref{minh} and $\u_h:=h^{-1}\v_h$.  By Step 1 we get
\begin{equation}\label{uno}
\mathcal F(\v_h;h\mathcal L)\le \mathcal F(h\u^*_h;h\mathcal L)= \frac{h^2}2\int_\om\mathbb E\u_*\,D^2\mathcal W(x,\mathbf I)\,\mathbb E\u_*\,d\x
-h^2\mathcal L(\u_*)+o(h^2)
\end{equation}
as $h\to 0$.
On the other hand, 
by  \eqref{rigidity}  and thanks to the Euler-Rodrigues formula yielding the representation $$\mathbf R_h=\mathbf I+\sin\theta_h\mathbf W_h+(1-\cos\theta_h)\mathbf W^2_h$$
for some suitable $\theta_h\in\mathbb R$ and $\mathbf W_h\in\mathbb R^{3\times3}_{\mathrm{skew}}$ with $|\mathbf W_h|^2=|\mathbf W_h^2|^2=2$,
 we have
 \begin{equation}
 \begin{aligned}
&\displaystyle\frac{1}{C_{\om}}\int_\om \left(\left|\mathbb E\v_h-(1-\cos\theta_h)\mathbf W^2_h\right|^2+\left |\mathrm{skew} (\nabla\v_h)-\sin\theta_h\mathbf W_h\right |^2\right)d\x \\&\qquad=
\displaystyle\frac1{C_{\om}}\int_{\om}|\mathbf I-\mathbf R_h+\nabla\v_h|^2\,d\x\le \int_\om \mathcal W(\x,\mathbf I+\nabla\v_h)\,d\x=h\mathcal L(\v_h)+\mathcal F(\v_h;h\mathcal L).
\end{aligned}
\end{equation}
By taking into account that $\int_\om \curl \v_h=\mathbf 0$, i.e., $\int_\om \mathrm{skew} (\nabla\v_h)=\mathbf 0$, an application of Jensen inequality yields
\begin{equation}\label{due}
\frac{1}{C_{\om}}\int_\om \left|\mathbb E\v_h-(1-\cos\theta_h)\mathbf W^2_h\right|^2\, d\x+\frac{|\om|}{C_\om}\left |\sin\theta_h\mathbf W_h\right |^2\le h\mathcal L(\v_h)+\mathcal F(\v_h;h\mathcal L).
\end{equation}
From \eqref{minh}, \eqref{uno} and \eqref{due}, by \eqref{KP} and by the energy identity \eqref{energyidentity}  we deduce %
\begin{equation}\label{cinque}\begin{aligned}
&\frac{1}{C_{\om}}\int_\om \left|\mathbb E\v_h-(1-\cos\theta_h)\mathbf W^2_h\right|^2\, d\x+\frac{|\om|}{C_\om}\left |\sin\theta_h\mathbf W_h\right |^2\\&\qquad\le
-\frac{h^2}2\int_\om\mathbb E\u_*\,D^2\mathcal W(\x,\mathbf I)\,\mathbb E\u_*\,d\x+K_\Omega h\|\mathcal L\|_{ *}\,\|\mathbb E\v_h\|_{L^2(\om,\mathbb R^{3\times3})}+o(h^2)\\
&\qquad\le K_\Omega h^2\|\mathcal L\|_{ *}+o(h^2)
\end{aligned}
\end{equation}
as $h\to 0$.
By setting $$\mathbf A_h:=h^{-1}(1-\cos\theta_h)\mathbf W^2_h\quad\mbox{and}\quad \mathbf B_h:=h^{-1}\sin \theta_h\mathbf W_h,$$
 thanks to \eqref{cinque} we deduce that 
\beeq\lab{sei}
\begin{aligned}
&\int_\om \left|\mathbb E\u_h-\mathbf A_h\right|^2\, d\x+|\om||\mathbf B_h|^2\le
C_{\om}K_\Omega\|\mathcal L\|_{ *} +o(1)
\end{aligned}
\eneq
as $h\to 0$.
 In particular, since $\int_\Omega |\mathbb E\u_h|^2\le 1$, we deduce that $|\Omega||\mathbf A_h|^2\le 2+2C_{\om}K_\Omega\|\mathcal L\|_{ *}+o(1)$ as $h\to 0.$ Then we get \KKK
$|\om|
 |\mathbf B_h|^2 \le  C_{\om}K_\Omega\|\mathcal L\|_*+o(1)
$ as $h\to 0$, hence $\mathbf A_h\to 0$ as $h\to 0$. 
\textbf{Step 3.}
 We end the proof by contradiction, assuming  that there exists a vanishing sequence $(h_n)_{n\in\mathbb N}\subset(0,1)$ such that \begin{equation}\label{=1}h_n^2\int_\om |\mathbb E\u_{h_n}|^2=\int_\om |\mathbb E\v_{h_n}|^2=h_n^2\end{equation}
  for all $n\in\mathbb N$. Thanks to Lemma \ref{conv} \MMM(applied with $M=1$)\KKK,    along a not relabeled subsequence we have $\mathbb E\u_{h_n}\rightharpoonup\mathbb E\u_{*}$ weakly in $L^2(\om,\R^{3\times3})$
 and thanks to \eqref{=1} and to \eqref{sei} 
we have 
\begin{equation*}\begin{aligned}
1-2\int_\om \mathbb E\u_{h_n}:\mathbf A_{h_n}
&=\left(\int_\om |\mathbb E\u_{h_n}|^2-2\int_\om \mathbb E\u_{h_n}:\mathbf A_{h_n}\right)
\\&\le   \int_\om \left|\mathbb E\u_{h_n}-\mathbf A_{h_n}\right|^2\le C_\om K_\Omega\|\mathcal L\|_{ *} + o(1)\end{aligned}\end{equation*}
{as $n\to+\infty$}.
We pass to the limit as $n\to+\infty$ and by recalling that $\mathbf A_{h_n}\to 0$ we obtain
\begin{equation*}1\le C_{\om}K_\Omega\|\mathcal L\|_{ *},
\end{equation*}
which is
a contradiction with \eqref{h2}.
\end{proof}

\begin{corollary}\lab{locmin} Suppose that the assumptions of {\rm Theorem \ref{TH1}} are satisfied and
let \MMM $$M_0>\max\left\{1,\frac{\|\mathcal L\|_*}{\gamma(\om,C)}\right\},$$ where $\gamma(\om,C)$ is defined in {\rm Lemma \ref{intmin}}. There exist  $h_0>0$ such that if $0<h<h_0$ and $\v_h$ is a solution to
 \begin{equation}\label{mh}\min \left \{\mathcal F( \v; h\mathcal L): \v\in H^1(\om,\mathbb R^3),\ \int_\om |\mathbb E(\v)|^2\le M_0^2h^2,\,\int_\om\mathrm{curl}\,\v=\mathbf0\right \},
 \end{equation}
then $\v_h$ is a local minimizer for $\mathcal F (\cdot; h\mathcal L)$  over $H^1_{\mathbf I}(\om,\mathbb R^3)$ and $\u_h:=h^{-1}\v_h$ is a local minimizer for $\mathcal F_h (\cdot; \mathcal L)$  over $H^1_{\mathbf I}(\om,\mathbb R^3)$.
\end{corollary}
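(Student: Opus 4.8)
The plan is to derive this from Lemma~\ref{intmin} by a rescaling of the load functional, and then to use the resulting strict interiority of the constrained minimizer in order to promote it to a local minimizer in the sense of Definition~\ref{constrained}.

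First I would introduce the rescaled load $\tilde{\mathcal L}:=M_0^{-1}\mathcal L$. It still satisfies \eqref{L1}, since multiplying by a positive constant preserves the equilibration identities, and by the very definition of $M_0$ one has $\|\tilde{\mathcal L}\|_*=M_0^{-1}\|\mathcal L\|_*\le\tfrac12\,\gamma(\om,C)$, i.e.\ \eqref{h2} holds for $\tilde{\mathcal L}$. Hence Lemma~\ref{intmin} applies to $\tilde{\mathcal L}$ and yields $\bar h>0$ such that every solution of \eqref{minh} relative to $\tilde{\mathcal L}$ with parameter $\hat h\in(0,\bar h)$ satisfies $\int_\om|\mathbb E\v|^2<\hat h^2$. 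The point is now an elementary identification: since $\hat h\,\tilde{\mathcal L}=(\hat h/M_0)\,\mathcal L$ and the ball $\{\int_\om|\mathbb E\v|^2\le\hat h^2\}$ coincides with $\{\int_\om|\mathbb E\v|^2\le M_0^2(\hat h/M_0)^2\}$, the problem \eqref{minh} for $\tilde{\mathcal L}$ at parameter $\hat h$ is, after setting $h:=\hat h/M_0$, literally problem \eqref{mh} for $\mathcal L$ at parameter $h$ (the constraint $\int_\om\mathrm{curl}\,\v=\mathbf 0$ being homogeneous, hence unaffected by the rescaling). Setting $h_0:=\bar h/M_0$, it follows that any solution $\v_h$ of \eqref{mh} with $0<h<h_0$ satisfies the \emph{strict} inequality $\int_\om|\mathbb E\v_h|^2<M_0^2h^2$.

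Then I would conclude with a short perturbation argument. Given $\boldsymbol\psi\in H^1_{\mathbf I}(\om,\mathbb R^3)$, the function $\eps\mapsto\int_\om|\mathbb E(\v_h+\eps\boldsymbol\psi)|^2$ is a quadratic polynomial in $\eps$ taking at $\eps=0$ the value $\int_\om|\mathbb E\v_h|^2<M_0^2h^2$; by continuity there is $\eps_0=\eps_0(\v_h,\boldsymbol\psi)>0$ with $\int_\om|\mathbb E(\v_h+\eps\boldsymbol\psi)|^2\le M_0^2h^2$ for all $\eps\in[0,\eps_0]$. Moreover $\int_\om\mathrm{curl}(\v_h+\eps\boldsymbol\psi)=\int_\om\mathrm{curl}\,\v_h+\eps\int_\om\mathrm{curl}\,\boldsymbol\psi=\mathbf 0$, so $\v_h+\eps\boldsymbol\psi$ is admissible for \eqref{mh} and minimality of $\v_h$ gives $\mathcal F(\v_h;h\mathcal L)\le\mathcal F(\v_h+\eps\boldsymbol\psi;h\mathcal L)$ for every $\eps\in[0,\eps_0]$. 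Since $\mathcal F(\v;h\mathcal L)=\mathcal G(\mathbf i+\v;h\mathcal L)$ for every $\v$, this is precisely the statement that $\v_h$ is a local minimizer for $\mathcal F(\cdot;h\mathcal L)$ over $H^1_{\mathbf I}(\om,\mathbb R^3)$ according to Definition~\ref{constrained} with $\mathbf R=\mathbf I$, and Remark~\ref{ftofh} then yields that $\u_h=h^{-1}\v_h$ is a local minimizer for $\mathcal F_h(\cdot;\mathcal L)$ over $H^1_{\mathbf I}(\om,\mathbb R^3)$.

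I do not anticipate a genuine difficulty: the whole substance is contained in Lemma~\ref{intmin}, and what remains is the bookkeeping of the substitution $(\mathcal L,h)\leftrightarrow(M_0^{-1}\mathcal L,M_0h)$ together with the one-line continuity remark for the ball constraint. The only minor points to check are that $M_0\ge 1$ (so that $h_0=\bar h/M_0\le\bar h$ is a legitimate threshold) and that the constraint $\int_\om\mathrm{curl}\,\v=\mathbf 0$ survives the rescaling, both of which are immediate.
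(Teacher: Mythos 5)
Your proposal is correct and follows essentially the same route as the paper: apply Lemma~\ref{intmin} to the rescaled load $M_0^{-1}\mathcal L$ (which satisfies \eqref{h2} by the choice of $M_0$) to get strict interiority $\int_\om|\mathbb E\v_h|^2<M_0^2h^2$, then use continuity of $\eps\mapsto\int_\om|\mathbb E(\v_h+\eps\boldsymbol\psi)|^2$ to make $\v_h+\eps\boldsymbol\psi$ admissible for small $\eps$, and conclude via Remark~\ref{ftofh}. Your explicit bookkeeping of the substitution $(\mathcal L,h)\leftrightarrow(M_0^{-1}\mathcal L,M_0h)$ is exactly the identification the paper performs implicitly when it invokes Lemma~\ref{intmin} with $M_0^{-1}\mathcal L$ in place of $\mathcal L$.
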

\begin{proof} 
The existence of a solution to problem \eqref{mh} is due to Lemma \ref{exist} and Remark \ref{minremark}.
\MMM
Since  $\|M_0^{-1}\mathcal L\|_*< \gamma(\om, C)$, \KKK we may apply Lemma \eqref{intmin} with  $M_0^{-1}\mathcal L$ in place of $\mathcal L$, thus finding $h_0>0$  such that $\v_h$ satisfies
$\int_\om |\mathbb E\v_h|^2< M_0^2h^2$  as soon as $h\in(0,h_0)$.

 Let $\boldsymbol \psi\in H^1(\om,\mathbb R^3)$ be such that $\int_\om\mathrm{curl}\,\boldsymbol\psi=\mathbf 0$. For any given $h\in (0, h_0)$, the continuity of the map $[0,1]\ni\eps\mapsto\int_\om |\mathbb E\v_h+\eps\boldsymbol\psi|^2$ shows that there exists $\eps_0=\eps_0(\v_h,\boldsymbol\psi)$ such that for every $\eps<\eps_0$ there holds $\int_\om |\mathbb E\v_h+\eps\boldsymbol\psi|^2<M_0^2h^2$. Thus for every $\eps<\eps_0$
we have that $\v_h+\eps\boldsymbol\psi$ is admissible for problem \eqref{mh} and so $\mathcal F(\v_h; h\mathcal L)\le \mathcal F(\v_h+\eps\boldsymbol\psi; h\mathcal L)$ as claimed. The  statement about $\u_h$ follows from Remark \ref{ftofh}.
\end{proof}

\begin{proofth1} 
Let $M_0$ and $h_0$ be as in Corollary \ref{locmin} and let $(h_j)_{j\in\mathbb N}\subset(0,h_0)$ be a vanishing sequence. For every $j\in\mathbb N$, let $\v_j$ be a solution to problem \eqref{mh} (with $h_j$ in place of $h$), whose existence is ensured by Remark \ref{minremark}.
By Corollary \ref{locmin}, for every $j\in\mathbb N$ we have that $\v_j$ is a local minimizer  for $\mathcal F (\cdot ; h_j\mathcal L)$ over $H^1_{\mathbf I}(\om,\R^{3\times 3})$, and $\u_j:=h_j^{-1}\v_j$ is a local minimizer  for $\mathcal F_{h_j}(\cdot;\mathcal L)$ over $H^1_{\mathbf I}(\om,\R^{3\times 3})$. Moreover, by Lemma \ref{conv} we get, up to subsequences,
$\mathbb E(\u_j)={h_j}^{-1}\mathbb E(\v_j)\wconv\mathbb E(\u_*)$
 weakly in  $L^2(\Omega,\R^{3\times 3})$ as $j\to\infty$  and
\[
\lim_{j\to \infty}\mathcal F_{h_{j}}(\u_j;\mathcal L)=\lim_{j\to +\infty}h_j^{-2}\mathcal F(\v_j; h_{j}\mathcal L)= \mathcal F_0(\u_*;\mathcal L),
\]
where $\u_*\in H^1(\om,\mathbb \R^3)$ is a solution to
$$\min_{\u\in H^1(\om,\mathbb \R^3)}\left\{\mathcal F_0(\u;\mathcal L): \int_\om |\mathbb E\u|^2\le M^2_0\right\}.$$
\MMM Since the choice of $M_0$ from Corollary \ref{locmin} is such that
$
M_0^{-1}\|\mathcal L\|_*< \gamma(\om, C) 
$, \KKK
we obtain 
 $\|\mathcal L\|_*\le \tfrac{CM_0}{K_\Omega}$, thus by Lemma \ref{basiclemma} we deduce that $\u_*$ minimizes $\mathcal F(\cdot;\mathcal L)$ over the whole $H^1(\om,\mathbb R^3)$. \end{proofth1}

\KKK
\begin{proofad5} Since $\mathbf R\in \mathcal S^0_{\mathcal L}$, then $\mathbf R^T\mathcal L$ satisfies \eqref{L1} as remarked in Section \ref{mainsection}. Hence, by Theorem \ref{TH1} there exist a sequence $\{h_j\}_{j\in\mathbb N}\subset(0,1)$ and local minimizers $\v_j$ for 
$\mathcal F(\cdot\,; h_j\mathbf R^T\mathcal L)$ \MMM over $H^1_{\mathbf I}(\om,\mathbb R^3)$ \KKK 
 such that 
\[ \lim_{j\to\infty}h_j^{-2}\mathcal F(\v_j\,; h_j\mathbf R^T\mathcal L)= \beta(\mathbf R),\]
where $\beta$ is defined by \eqref{betti}.
By setting $\mathbf y_j(\x):= \mathbf R \x+h_j\mathbf R\v_j$ we get $\int_\om \curl \mathbf R^T\mathbf y_j=0$. By taking  into account frame indifference and again that $\mathbf R\in \mathcal S^0_{\mathcal L}$ we get 
\[\begin{aligned} \mathcal G(\mathbf y_j\,; h_j\mathcal L)&=\int_\om\mathcal W(\x,\mathbf R+h_j\nabla\mathbf R\v_j)\,d\x-h_j\mathcal L(\mathbf y_j-\x)\\
&\displaystyle=\int_\om\mathcal W(\x,\mathbf I+h_j\nabla\v_j)\,d\x-h_j\mathcal L(\mathbf y_j-\mathbf R \x)\\
&\displaystyle =\int_\om\mathcal W(\x,\mathbf I+h_j\nabla\v_j)\,d\x-h_j^2\mathcal L(\mathbf R \v_j)= \mathcal F(\v_j\,;h_j\mathbf R^T\mathcal L).
\end{aligned}\]
Therefore $\mathbf y_j$ is a  local minimizer for $\mathcal G(\cdot\,; h_j\mathcal L)$ \MMM over $H^1_{\mathbf R}(\om,\mathbb R^3)$  \KKK
and 
\[\lim_{j\to\infty}h_j^{-2}\mathcal G(\mathbf y_j\,; h_j\mathcal L)= \lim_{j\to\infty}h_j^{-2}\mathcal F(\v_j\,; h_j\mathbf R^T\mathcal L)= \beta(\mathbf R)\]
as claimed.
\end{proofad5}

\begin{proofad6} Since $\mathcal S^0_{\mathcal L}\setminus\{\mathbf I\}\neq \emptyset$ and since $\beta$ is not a constant map then $\beta(\mathcal S^0_{\mathcal L})=[a,b]$ for some couple of reals $a,b$, $a< b$, and so for every $n\in \mathbb N$ there exists $\{\mathbf R_k:\, k=1,2...n\}\subset \mathcal S^0_{\mathcal L}$ such that $\beta(\mathbf R_k)\not=\beta(\mathbf R_k')$ if $k\not=k'$. By Corollary \ref{CR1} for every vanishing sequence $(h_j)_{j\in\mathbb N}\subset(0,1)$ and for every $k=1,2....n$ there exist a subsequence $(h_j^{(k)})_{j\in\mathbb N}$ and  local minimizers $\mathbf y_j^{(k)}$  for $\mathcal G(\cdot\,; h_j^{(k)}\mathcal L)$ \MMM over $H^1_{\mathbf R_k}(\om,\mathbb R^3)$ \KKK 
such that
\[\lim_{j\to\infty}\left({h_j^{(k)}}\right)^{-2}\mathcal G(\mathbf y_j^{(k)}\,; h_j^{(k)}\mathcal L)= \beta(\mathbf R_k).\]
We let
\[\delta_n:=\min\{ |\beta(\mathbf R_k)-\beta(\mathbf R_k')|: k\not=k', \ k,k'=1,2,...n\}.\]
It is readily seen that for every $k=1,2....n$ there exists \MMM $h_0^{(k)}> 0$ \KKK such that, for every $0< h< h_0^{(k)}$, the functional $\mathcal G(\cdot\,; h\mathcal L)$ has a local minimizer $\mathbf y^{(k)}$ \MMM over $H^1_{\mathbf R_k}(\om,\mathbb R^3)$ \KKK satisfying 
$$|\mathcal G(\mathbf y^{(k)}\,; h\mathcal L)- \beta(\mathbf R_k)| < \delta_n/2.$$ The result follows by choosing $h_0:=\min \{h_0^{(k)}: k=1,2...n\}$.
\end{proofad6}

\section{Constrained local minimizers at different energy levels: an example}\label{sectionexample}
In this Section we give an explicit example in which the situation described in Corollary \ref{CR1} and Corollary \ref{CR2}  occurs. To this aim we shall consider  the  Yeoh type energy density defined by \eqref{iso-vol} and \eqref{Yeoh}, with $\mathcal W_{\textup{vol}}(\mathbf F)=g(\det\mathbf F)$, where
$g:\mathbb R_+\to\mathbb R$ is the convex $C^2$  function (satisfying \eqref{guno} and \eqref{phi_growth} with $r=2$) obtained by setting \begin{equation*}g(t)=c(t^2-1-2\log t)\end{equation*} for some $c>0$. This is a usual choice, see \cite{H}.

By taking into account that  for every $\mathbf B\in \mathbb R^{3\times3}_{\mathrm{sym}}$ there holds
\begin{equation*}\lab{taylor}
\frac{|\mathbf I+ \eps \mathbf B|^2}{\det (\mathbf I+\eps \mathbf B)^{2/3}}-3= \eps^2 (2|\mathbf B|^2-\tfrac{4}{3}|\tr \mathbf B|^2)+o(\eps^2)
\end{equation*}
as $\eps\to0$,
and  since 
\begin{equation*}
\mathcal W_{\textup{vol}}(\mathbf I+ \eps \mathbf B)=g(\det(\mathbf I+ \eps \mathbf B))=\frac{\eps^2}{2}g ''(1)|\tr \mathbf B|^2+ o(\eps^2)=2c|\tr \mathbf B|^2+ o(\eps^2),
\end{equation*}
 we get 
 \begin{equation*}\lab{linstrain}
\frac{1}{2}\mathbf B\,D^2\mathcal W(\mathbf I)\,\mathbf B=2c_1|\mathbf B|^2+ ( 2c-\tfrac{4}{3}c_1)|\tr \mathbf B|^2
\end{equation*}
and  we choose from now on  $c_1=2,\ c=\tfrac{4}{3}$ so that $$ \frac{1}{2}\mathbf B\,D^2\mathcal W(\mathbf I)\,\mathbf B=4|\mathbf B|^2.$$
 
 Let now
\begin{equation}\label{cyl2}\Omega:=\{\x=(x,y,z)\in\mathbb R^3: x^2+y^2<1,\,0<z<1\},\end{equation} 
 $B:=\{(x,y)\in\mathbb R^2: x^2+y^2<1\},$  let $\phi:B\to\R$ be the radial function whose radial profile (still denoted by $\phi$) is
 \beeq\lab{phi}\phi(r):= \log r+r^2-3r+2,\qquad r:=\sqrt{x^2+y^2},\eneq 
 and let us consider a load functional of the form
  \beeq\lab{f} \mathcal L(\u)=\int_\Omega \mathbf f\cdot \u\,d\x\qquad\mbox{where}\qquad\mathbf f(\x)=\mathbf f(x,y,z):=r^{-1}\phi'(r)({x},{y}, 0).\eneq
  It is readily seen that $\phi(1)=\phi'(1)=0$ and that $\mathbf f\in L^p(\om,\R^3)$ if and only if  $p< 2$ (\MMM so that assumption \eqref{intforces} is not satisfied\KKK) and thus $\mathcal L\in(H^1(\om,\R^3))^*$. \MMM It is also easy to see that $\mathcal L$ is equilibrated. We claim that $\mathcal L((\mathbf R-\mathbf I)\x)=0$
 for every $\mathbf R\in SO(3)$, that is, condition \eqref{L1plus} is satisfied and $\mathcal S^0_\mathcal L\equiv SO(3)$ (\MMM which implies that every direction in $\mathbb R^3$ is an axis of equilibrium of $\mathcal L$, because the astatic load is zero\KKK). Indeed if $\mathbf R\in SO(3)$ then by the Euler-Rodrigues formula there exist $\theta\in [0,2\pi)$
 and $\mathbf a\in \mathbb R^3,\ |\mathbf a|=1$, such that for every $\x\in\om$
 \[\mathbf R\x=\x+\sin\theta\, (\mathbf a\wedge \x)+(1-\cos\theta)\,\mathbf a\wedge (\mathbf a\wedge \x),\]
 hence by setting $c(\mathbf a,\theta)=(1-\cos\theta)(\pi(1-a_3^2)-1)$ we have
 \[\begin{aligned}\displaystyle\int_\om \mathbf f(\x)\cdot (\mathbf R-\mathbf I)\x\,d\x&=\sin\theta\int_\om \mathbf f(\x)\cdot (\mathbf a\wedge \x)\,d\x+(1-\cos\theta)\int_\om \mathbf f(\x)\cdot((\mathbf a\cdot \x)\mathbf a-\x)\,d\x\\
 &=\displaystyle c(\mathbf a,\theta)\int_0^1 r^2\phi'(r)\,dr= -2c(\mathbf a,\theta)\int_0^1 r\phi(r)\,dr =0
 \end{aligned}\]
 as claimed. 
 
 Let us now consider the following rotation of $\pi/2$ around the $z$ axis \begin{equation*}\label{tildematrix}
{\mathbf R}_*:={\footnotesize\left(\begin{array}{ccc}0&-1&0\\1&0&0\\0&0&1\end{array}\right)}.
 \end{equation*}
 We claim that $\beta({\mathbf R}_*)< \beta(\mathbf I)$, that is, the map $\beta$ defined in \eqref{betti} takes two different values as required in Corollary \ref{CR2}. To this aim we need the following
 \begin{lemma}\label{lemma21} Let $\om$ as in \eqref{cyl2},
 $\phi$ as in \eqref{phi} and $\mathcal L$ as in \eqref{f}.  Then 
\[
\min_{H^1(\Omega;\mathbb R^3)}{\mathcal F}_0(\cdot\,;{\mathbf R}_*^T\mathcal L)\le \min_{u\in H^2(B)} \int_{B}8u_{xy}^2+2(u_{yy}-u_{xx})^2-\int_B (u_{y}\phi_{y}+u_{x}\phi_{x})< 0. \]
 \end{lemma}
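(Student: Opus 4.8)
\noindent\textbf{Proof strategy for Lemma \ref{lemma21}.} The plan is to establish the two inequalities in the displayed chain in turn. The first one is a test‑function estimate: I would restrict the minimization of $\mathcal F_0(\cdot\,;\mathbf R_*^T\mathcal L)$ over $H^1(\om,\mathbb R^3)$ to the class of $z$‑independent competitors generated by a scalar potential on $B$. The second is a strict‑negativity statement obtained by a one–parameter scaling, once one checks that the linear term in the $2$D problem is not identically zero.

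For the first inequality, given $u\in H^2(B)$ I set $\mathbf w:=(-\partial_y u,\partial_x u,0)$; since $\partial_x u,\partial_y u\in H^1(B)$, the field $\mathbf w$, extended constantly in $z$, lies in $H^1(\om,\mathbb R^3)$. A direct computation gives $\mathbb E(\mathbf w)$ with diagonal entries $-u_{xy},u_{xy},0$ and $(1,2)$–entry $\tfrac12(u_{xx}-u_{yy})$, so $4|\mathbb E(\mathbf w)|^2=8u_{xy}^2+2(u_{yy}-u_{xx})^2$; using the normalization $\tfrac12\mathbf B\,D^2\mathcal W(\mathbf I)\,\mathbf B=4|\mathbf B|^2$ fixed above and $\om=B\times(0,1)$, the stored energy of $\mathbf w$ equals $\int_B 8u_{xy}^2+2(u_{yy}-u_{xx})^2$. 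For the load term, recalling that $(\mathbf R^T\mathcal L)(\mathbf v)=\mathcal L(\mathbf R\mathbf v)$ (as in the computation preceding \eqref{betti}) and that $\mathbf f=(\phi_x,\phi_y,0)$, one has $\mathbf R_*\mathbf w=(-\partial_x u,-\partial_y u,0)$, whence $(\mathbf R_*^T\mathcal L)(\mathbf w)=-\int_B(u_x\phi_x+u_y\phi_y)$. Therefore $\mathcal F_0(\mathbf w;\mathbf R_*^T\mathcal L)=\int_B 8u_{xy}^2+2(u_{yy}-u_{xx})^2+\int_B(u_x\phi_x+u_y\phi_y)$; since replacing $u$ by $-u$ changes neither the quadratic part nor the admissible class but reverses the sign of the linear term, passing to the infimum over $u\in H^2(B)$ yields the first inequality.

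For the strict inequality I write $\mathcal E(u):=\int_B 8u_{xy}^2+2(u_{yy}-u_{xx})^2-\int_B\nabla u\cdot\nabla\phi=:Q(u)-L(u)$, with $Q\ge0$. It then suffices to exhibit one $u_0\in H^2(B)$ with $L(u_0)\neq0$, because $\mathcal E(tu_0)=t^2Q(u_0)-tL(u_0)$ is negative for $t:=L(u_0)/(2Q(u_0))$ (or for any $t$ with $tL(u_0)>0$ if $Q(u_0)=0$), giving $\inf_{H^2(B)}\mathcal E<0$. I would take $u_0$ radial, $u_0(x,y)=\psi(r)$ with $\psi\in C_c^\infty((3/4,1))$, $\psi\ge0$, $\psi\not\equiv0$, so that $u_0\in C_c^\infty(B)\subset H^2(B)$; integrating by parts in $r$ and using $\phi'(1)=0$ together with $(r\phi')'=r\Delta\phi=4r-3$ gives $L(u_0)=2\pi\int_0^1\psi'\phi'\,r\,dr=-2\pi\int_0^1\psi(r)(4r-3)\,dr<0$, since $4r-3>0$ on $\mathrm{supp}\,\psi$.

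The explicit computations above are routine; the only mildly delicate point is the legitimacy of writing ``$\min$'' for the $2$D problem, i.e. that $\mathcal E$ attains its infimum on $H^2(B)$. This follows by the direct method once $Q$ is known to be coercive on $H^2(B)$ modulo its four–dimensional kernel $\{\text{affine functions}\}\oplus\mathbb R\,(x^2+y^2)$ — a second–order Korn (Ne\v{c}as‑type) inequality, available because $\mathrm{dev}\,\nabla^2$ is elliptic — together with the elementary fact that $L$ vanishes on that kernel (for $x^2+y^2$ this is $\int_0^1 r^2\phi'\,dr=0$, equivalently $\int_0^1 r\phi\,dr=0$, which already appeared in the verification that $\mathcal{S}^0_{\mathcal{L}}=SO(3)$). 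I expect this coercivity, rather than any of the calculations, to be the only subtle ingredient; for the sole purpose of later deducing $\beta(\mathbf R_*)<0$ one may in fact dispense with it and read ``$\min$'' as ``$\inf$''. Throughout, one should keep in mind that $\int_B\nabla u\cdot\nabla\phi$ is a genuine Lebesgue integral, since $\nabla\phi\in L^p(B)$ for every $p<2$ while $\nabla u\in L^q(B)$ for every $q<\infty$ when $u\in H^2(B)$.
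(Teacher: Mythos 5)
Your proof is correct. For the first inequality you do essentially what the paper does: restrict $\mathcal F_0(\cdot\,;\mathbf R_*^T\mathcal L)$ to the class of $z$-independent competitors generated by a scalar potential $u\in H^2(B)$ — the paper takes $\mathcal K=\{(u_y,-u_x,0)\}$, you take $(-u_y,u_x,0)$ and then symmetrize $u\mapsto -u$ to fix the sign of the load term; the two are equivalent and both give $4|\mathbb E|^2=8u_{xy}^2+2(u_{yy}-u_{xx})^2$. For the strict negativity you take a genuinely different and more elementary route. The paper assumes the two-dimensional minimum is attained, derives the Euler--Lagrange equation $4\Delta^2u+\Delta\phi=0$ on punctured disks $B\setminus B_\eps$, invokes the energy identity \eqref{eulermin}, and argues by contradiction that a zero minimum would force $u_{xy}=u_{yy}-u_{xx}=0$, hence $\Delta\phi\equiv0$, which is false. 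You instead exhibit an explicit radial bump $u_0=\psi(r)$ supported in $3/4<r<1$, compute $\int_B\nabla u_0\cdot\nabla\phi=-2\pi\int_0^1\psi(r)(4r-3)\,dr<0$ (your identity $(r\phi')'=4r-3$ is the correct computation; the paper's ``$\Delta\phi=3-\tfrac2r$'' is a slip for $4-\tfrac3r$, immaterial to its contradiction), and conclude by the scaling $t\mapsto t^2Q(u_0)-tL(u_0)$. This avoids both the existence of a 2D minimizer and the first-variation argument near the singularity of $\phi$ at the origin, and gives a quantitative negative bound; the paper's route, in exchange, produces the minimizer's PDE, which it reuses later in the section. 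Your closing remark is also well taken: to deduce $\beta(\mathbf R_*)<0$ the middle ``$\min$'' may be read as ``$\inf$'', and attainment otherwise requires a second-order Korn-type coercivity modulo the kernel of the deviatoric Hessian (on which $L$ vanishes, by $\int_0^1 r^2\phi'\,dr=0$) — a point the paper glosses over by asserting weak closedness of $\mathcal K$ without checking coercivity modulo invariances.
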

 \begin{proof}
Let $B$  the unit ball in $\mathbb R^2$ and $$\mathcal K:=\{(u_{y},-u_{x}, 0): u\in H^2(B)\}\subset H^1(\Omega;\mathbb R^3).$$  It is easily seen that $\mathcal K$ is weakly closed in $H^1(\Omega,\mathbb R^3)$ so  there are minimizers of ${\mathcal F}_0(\cdot\,;{\mathbf R}_*^T\mathcal L)$ over $\mathcal K$ (recalling that $\mathbf R_*^T\mathcal L$ satisfies \eqref{L1} since the $z$ axis is an axis of equilibrium for $\mathcal L$).
 Therefore
  \begin{equation}\label{comparison}
\displaystyle  \min_{\v\in H^1(\Omega;\mathbb R^3)}{\mathcal F}_0(\v\,;{\mathbf R}_*^T\mathcal L)\le  \min_{u\in H^2(B)} \int_{B}8u_{xy}^2+2(u_{yy}-u_{xx})^2-\int_B (u_{y}\phi_{y}+u_{x}\phi_{x})
\end{equation}
If $u$ is a minimizer of the right hand side of \eqref{comparison}, $0< \eps< 1$, $B_\eps:=\{(x,y)\in\mathbb R^2:x^2+y^2<\eps\}$ and $\zeta\in H^2_0(B\setminus B_\eps)$, by taking into account that $\Delta\phi\in L^2(B\setminus B_\eps)$, a first variation argument for the minimization problem in the right hand side of \eqref{comparison} yields 
 \begin{equation*}\label{variation}\begin{aligned}
  4\int_{B\setminus B_\eps} (4u_{xy}\zeta_{xy}+u_{yy}\zeta_{yy}+u_{xx}\zeta_{xx}-u_{yy}\zeta_{xx}-u_{yy}\zeta_{xx})&
 =\int_{B\setminus B_\eps} (\zeta_{y}\phi_{y}+\zeta_{x}\phi_{x})= -\int_{B\setminus B_\eps}\zeta\Delta\phi
 \end{aligned}
 \end{equation*}
 and after integration by parts we obtain the Euler-Lagrange equation $$4\Delta^2 u+\Delta\phi=0\qquad\mbox{ in $\mathcal D'(B\setminus B_\eps)$},$$
 for every $0< \eps < 1$, where $\Delta^2$ denotes the planar biharmonic operator. On the other hand if $u$ is a minimizer then by using $u$ as test function  we easily get
 \beeq  \lab{eulermin}\displaystyle\int_{B}16u_{xy}^2+4(u_{yy}-u_{xx})^2+(u_{y}\phi_{y}+u_{x}\phi_{x})=0.
  \eneq
In view of \eqref{comparison}, and since ${\mathcal F}_0(\mathbf 0\,;{\mathbf R}_*^T\mathcal L)=0$, in order to conclude it is enough to show that $\min_{\v\in \mathcal K}{\mathcal F}_0(\v\,;{\mathbf R}_*^T\mathcal L)\neq 0$.
 Assume by contradiction 
that $\min_{\v\in \mathcal K}{\mathcal F}_0(\v\,;{\mathbf R}_*^T\mathcal L)= 0$: then by \eqref{eulermin} we get
  \[ \displaystyle\int_{B}4u_{xy}^2+(u_{yy}-u_{xx})^2=0
  \]
  that is $u_{xy}=u_{yy}-u_{xx}=0$ a.e. in $B$ hence $\Delta^2 u=0$ a.e.
   in $B$ and from the  above  Euler-Lagrange equation we deduce $\Delta\phi\equiv 0$ in $B\setminus B_\eps $,  a contradiction since $\Delta\phi = 3-\frac{2}{r}$  in $B\setminus B_\eps $.
  \end{proof}
  
  We can now conclude by proving the claim, i.e., by proving that $\beta({\mathbf R}_*)< \beta(\mathbf I)$.
  Let  $\v=(v_1,v_2,v_3)\in H^1(\Omega,\mathbb R^3)$,  $\tilde \v:=(v_1,v_2)$ and let $\tilde \u\in H^1(B,\mathbb R^2)$ be defined by $\tilde\u(x_1,x_2):=\int_0^1 \tilde\v(x_1,x_2,x_3)\,dx_3$. By Jensen inequality it is readily seen that
  \[ \displaystyle\mathcal F_0(\v\,;\mathcal L) \ge 4\int_B|\widetilde{\mathbb E}(\tilde \u)|^2-\int_B\nabla\phi\cdot\tilde \u
  \]
  where $\widetilde{\mathbb E}(\cdot)$ is the upper-left $2\times2$ submatrix of $\mathbb E(\cdot)$. By arguing as in the proof of Theorem 2.7 of \cite{MP}, if
  \begin{equation*}\label{etastar}
\eta_*(r)=-\frac1{16}\,r\phi(r)+\frac1{16r}\int_0^r t^2\phi'(t)\,dt
\end{equation*}
then  the radial function defined by $w(r):=\int_0^r\eta_*(t)\,dt$ belongs to $H^2(B)$ and $\nabla w$ minimizes 
$$\mathcal J(\tilde\u):=4\int_B|\widetilde{\mathbb E}(\tilde \u)|^2-\int_B\nabla\phi\cdot\tilde \u$$
over $H^1(B)$, hence  $w$ minimizes
$$4\int_B |D^2 v|^2-\int_B\nabla\phi\cdot\nabla v$$
among all $v$ in  $H^2(B)$, where $D^2$ denotes the Hessian in the $x,y$ variables. Therefore for every $0 < \eps <1$ and for every $\psi\in H^2_0(B\setminus B_\eps)$
$$8\int_{B\setminus B_\eps} D^2 v\cdot D^2 \psi-\int_{B\setminus B_\eps}\nabla\phi\cdot\nabla \psi=0,$$
that is, $w$ solves the biharmonic equation
$8\Delta^2 w=-\Delta\phi$ in $B\setminus B_\eps$ and since $\Delta\phi$ is not identically zero then $\Delta w$ is not identically zero as well.
This implies by Young inequality 
\begin{equation}\label{young}
\int_B 8w_{xy}^2+2(w_{yy}-w_{xx})^2-\int_B\nabla w\cdot\nabla\phi<\int_B  8w_{xy}^2+4w_{yy}^2+4w_{xx}^2-\int_B\nabla w\cdot\nabla\phi.
\end{equation}
Notice that the inequality is strict, since the Young inequality $2(w_{yy}-w_{xx})^2\le 4w_{xx}^2+4w_{yy}^2$ holds with equality if and only if $w_{xx}=-w_{yy}$, and we have just checked that $\Delta w$ does not vanish identically on $B$. In particular, $2(w_{xx}-w_{yy})^2< 4w_{xx}^2+4w_{yy}^2$ on a set of positive measure in $B$.
 From Lemma \ref{lemma21} and from \eqref{young} we infer 
\[\begin{aligned}&\beta(\mathbf R_*)=\min_{\u\in H^1(\Omega,\mathbb R^3)}{\mathcal F}(\u\,;\mathbf R_*^T\mathcal L)
\le \min_{v \in H^2(B)} \int_B 8v_{xy}^2+2(v_{yy}-v_{xx})^2-\int_B\nabla v\cdot\nabla\phi\\&
\le \int_B 8w_{xy}^2+2(w_{yy}-w_{xx})^2-\int_B\nabla w\cdot\nabla\phi< \int_B  8w_{xy}^2+4w_{xx}^2+4w_{yy}^2-\int_B\nabla w\cdot\nabla\phi\\&
= \min_{v\in H^2(B)}\int_B  8v_{xy}^2+4v_{xx}^2+4v_{yy}^2-\int_B\nabla v\cdot\nabla\phi\le \min_{\u\in H^1(\Omega,\mathbb R^3)}{\mathcal F}(\u\,;\mathcal L)=\beta(\mathbf I)\end{aligned}\]
 as claimed.
 In particular, Corollary \ref{CR2} applies to this example.

\subsection*{Acknowledgements} 
 The authors wish to warmly thank professor Giuseppe Puglisi for his kind suggestions. \KKK

The authors acknowledge support from the MIUR-PRIN  project  No 2017TEXA3H.
The authors are members of the
GNAMPA group of the Istituto Nazionale di Alta Matematica (INdAM).


\begin{thebibliography}{99}
	
	
	
	
	
	
	
	
	
	
	
	
	
	
	\bibitem{ADMDS}{V. Agostiniani, G. Dal Maso, A. De Simone, }{\textit{Linear elasticity obtained from finite elasticity by Gamma-convergence under weak coerciveness conditions, }}Ann. Inst. H. Poincar\'e Anal. non Lin\'eaire, \textbf{29}  (2012), 715--735.
	
	
	
	
	
	
	%
	
	
	
	
	
	\bibitem{B}
	J. M. Ball, {\it Convexity conditions and existence theorems in nonlinear elasticity}, Arch. Ration.
	Mech. Anal. {\bf 63} (1976/77), 337--403.
	
	
	
	
	
	
	
	
	
	
	
	
	
	
	
	
	
	
	
	
	
	\bibitem{Be} {M.F. Beatty}, {\it Some static and dynamic implications of the general theory of elastic stability}. Arch.
Rational Mech. Anal. {\bf 19} (1965),167--188.
	
	
	\bibitem{CMW1}{ D. Chillingworth, J. Marsden, Y. Wan, }{\it Symmetry and bifurcation in three-dimensional elasticity. I,} Arch. Rational Mech. Anal. {\bf 80} (1982) 295--331.
	
	\bibitem{CMW2}{D.	Chillingworth, J. Marsden, Y. Wan, }{\it Symmetry and bifurcation in three-dimensional elasticity. II,} Arch. Rational Mech. Anal. {\bf 83} (1983), 363--395.
	
	\bibitem{CDHL} {P. Charrier, B. Dacorogna, B. Hanouzet, P. Laborde}, {\it An existence theorem for slightly compressible material in nonlinear elasticity}, S.I.A.M Math. Anal. {\bf 19} (1988), 70--86.
	
	\bibitem{C} P. G. Ciarlet,  \textit{Mathematical Elasticity, Volume I: Three-Dimensional Elasticity}, Elsevier, 1988.
	
	\bibitem{CGM}{P.G. Ciarlet, L. Gratie, C. Mardare, }{\it  Intrinsic methods in elasticity: A mathematical survey, } Discret. Contin. Dyn. Syst. \textbf{23} (2009), 133–164.
	
	
	%
	
	
	
	
	
	\bibitem{DMPN} G. Dal Maso, M. Negri, D. Percivale,  \textit{Linearized elasticity as $\Gamma$-limit of finite elasticity}, Set-Valued Anal. \textbf{10}, no. 2-3 (2002), 165-183.
	
	
	
	
	
	
	
	
	
	
	
	
	\bibitem{F1} R.L. Fosdick, {\it Elastic stability and the zero moment condition},  J. Elasticity {\bf 1} (1971), 19--28.
	
	\bibitem{F2} R.L. Fosdick, and G.P. MacSithigh, {\it Minimization in incompressible nonlinear elasticity theory}, J. Elasticity {\bf 16} (1986), 267--301.


	
	\bibitem{FJM0} G. Friesecke, R. D. James, S. M\"uller, \textit{ A theorem on geometric rigidity and the derivation of non linear plate theory from three dimensional elasticity}, Comm.Pure Appl. Math. {\bf 55} (2002), 1461--1506.
	
	
	\bibitem{FJM} G. Friesecke, R. D. James, S. M\"uller, \textit{A Hierarchy of Plate Models from Nonlinear Elasticity by Gamma-Convergence},
	Arch. Rational Mech. Anal. 1{\bf 80} (2006), 183-236.
	
	
	
	
	
	
	
	
	%
	%
	
	
	
	
	
	%
	
	
	\bibitem{JS}
	M. Jesenko, B. Schmidt, {\it 
		Geometric linearization of theories for incompressible elastic materials and applications}, Math. Models Methods Appl. Sci. {\bf 31}, no. 4 (2021), 829–860.
	
	
	
	%
	
	
	\bibitem{LanzaValent}{M. Lanza De Cristoforis, T.  Valent, }{\it On Neumann's problem for a quasilinear differential system of the finite elastostatics type. Local theorems of existence and uniqueness,} Rend. Sem. Mat. Univ. Padova,\textbf{ 68} (1983), 183--206.
	
	
	\bibitem{LeD}{H. Le Dret, }{\it  Structure of the set of equilibrated loads in nonlinear elasticity and applications to existence and nonexistence,} J. Elast. \textbf{17} (1987), 123--141.
	
	\bibitem{H} G.A. Holzapfel, {\it Nonlinear Solid Mechanics: A Continuum Approach for Engineering}, Wiley, Chichester, 2000.
	
	
	
	
	
	
	
	
	
	%
	
	\bibitem{MPTJOTA} F. Maddalena, D. Percivale, F. Tomarelli,
	\textit{A new variational approach to linearization of traction problems in elasticity
		,}\textit{ J.Optim.Theory Appl.}  {\bf 182} (2019), 383--403.
	
	
	%
	
	\bibitem{MPTARMA} F. Maddalena, D. Percivale, F. Tomarelli, \textit{The gap in pure traction problems between linear elasticity and variational limit of finite elasticity}, Arch. Ration. Mech. Anal.  {\bf 234} (2019), 1091--1120.
	
	
	
	%
	%
	%
	
	\bibitem{MP}
	E. Mainini, D. Percivale,
	{\it Linearization of elasticity models for incompressible materials}, preprint arXiv:2004.09286
	
	\bibitem{MP2}
	E. Mainini, D. Percivale, {\it Variational linearization of pure traction problems in incompressible elasticity},  Z. Angew. Math. Phys.  {\bf 71}: 146 (2020).
	
	\bibitem{MPsharp}
	
	E. Mainini, D. Percivale, {\it Sharp conditions for the linearization of finite elasticity}, Calc. Var. Partial Differential Equations {\bf 60}: 164 (2021).
	
	
	\bibitem{MM} C. Maor, M. G. Mora, {\it
		Reference configurations vs. optimal rotations: a derivation of linear elasticity from finite elasticity for all traction forces}, J. Nonlinear Sci. {\bf 31}: 62 (2021).
		

	
	
	
	
	
	\bibitem{N} J. A. Nitsche, \textit{On Korn second inequality}, RAIRO Anal. Num\'er. {\bf 15} (1981) 237--248.
	
	
	
	
	
	
	%
	%
	%
	%
	
	
	
	
	\bibitem{Sc}
	B. Schmidt, {\it Linear Gamma-limits of multiwell energies in nonlinear elasticity theory}, Contin. Mech. Thermodyn. {\bf 20}, no. 6 (2008), 375–396.
	
	
	
	
	
	
	
	%
	
	\bibitem{Stoppelli54}{F. Stoppelli, }{\it Un teorema di esistenza ed unicit\`a relativo alle equazioni dell'elastostatica isoterma per deformazioni finite,} Ricerche Mat., \textbf{3} (1954), 247--267.
	
	\bibitem{Stoppelli55}{F. Stoppelli, }{\it Sulla sviluppabilit\`a in serie di potenze di un parametro delle soluzioni delle equazioni dell'Elastostatica isoterma,} Ricerche Mat., \textbf{4} (1955), 58--73.
	
	%
	%
	
	\bibitem{TN} C. Truesdell, W. Noll, {\it The Non-Linear Field Theories of Mechanics}, Springer, Berlin, 3rd edition, 2004.
	
	\bibitem{V} 
	{T. Valent}, {\it Boundary value problems of finite elasticity}, Springer-Verlag, New York, 1988.
	
	
	
	
	
	
	
	
	
	
	
	
	
	
	
	%
	%
	
	
	
	\KKK
	
	
	
	
	
	
	
	
	
	
	
	
	
	
	
	
	
	
	
	
	
	
\end{thebibliography}
\end{document}